\numberwithin{equation}{section}
\newcommand{\Z}{\ensuremath{\mathbb{Z}}}
\newcommand{\Q}{\ensuremath{\mathbb{Q}}}
\newcommand{\R}{\ensuremath{\mathbb{R}}}
\newcommand{\C}{\ensuremath{\mathbb{C}}}
\newcommand{\F}{\ensuremath{\mathbb{F}}}
\newcommand{\N}{\ensuremath{\mathbb{N}}}
\newtheorem{teo}{Theorem}[section]
\newtheorem{prop}[teo]{Proposition}
\newtheorem{cor}[teo]{Corollary}
\newtheorem{thm}{Theorem}
\theoremstyle{remark}
\newtheorem{claim}[thm]{Claim}
\theoremstyle{definition}
\newtheorem{example}[thm]{Example}
\newtheorem{df}[teo]{Definition}
\newtheorem{lemma}[teo]{Lemma}
\newtheorem{conj}[teo]{Conjecture}
\newtheorem{remark}[teo]{Remark}
\newcommand{\bc}{\begin{center}}
\newcommand{\ec}{\end{center}}
\newcommand{\be}{\begin{enumerate}}
\newcommand{\ee}{\end{enumerate}}
\newcommand{\bi}{\begin{itemize}}
\newcommand{\ei}{\end{itemize}}
\newcommand{\bd}{\begin{description}}
\newcommand{\ed}{\end{description}}
\newcommand{\beq}{\begin{equation}}
\newcommand{\eeq}{\end{equation}}
\newcommand{\beqa}{\begin{eqnarray}}
\newcommand{\eeqa}{\end{eqnarray}}
\newcommand{\bfr}{\begin{flushright}}
\newcommand{\efr}{\end{flushright}}
\newcommand{\bfl}{\begin{flushleft}}
\newcommand{\efl}{\end{flushleft}}
\newcommand{\fq}
{\mathbb{F}_q}
\newcommand{\fqk}
{\mathbb{F}_{q^k}}
\newcommand{\fqm}
{\mathbb{F}_q^{\times}}
\newcommand{\fqmc}
{\widehat{\mathbb{F}_{q}^{\times}}}
\newcommand{\fqkm}
{\mathbb{F}_{q^k}^{\times}}
\newcommand{\fqkmc}
{\widehat{\mathbb{F}_{q^k}^{\times}}}
\def\today{{\number\day\space
 \ifcase\month\or
  January\or February\or March\or April\or May\or June\or
  July\or August\or September\or October\or November\or December\fi
 \space\number\year}}
\begin{document}

\title{Hypergeometric Functions over Finite Fields and their relations to Algebraic Curves}
\author{M. Valentina Vega} 
%\thanks{
 % G. H. Tucci
 % is with Bell Laboratories,
 % Alcatel--Lucent, 600 Mountain Ave, Murray Hill, NJ 07974.
 % E-mail: gabriel.tucci@alcatel-lucent.com}

\begin{abstract}
In this work we present an explicit relation between the number of points on a family of algebraic curves over $\F_{q}$ and sums of values of certain hypergeometric functions over $\F_{q}$. Moreover, we show that these hypergeometric functions can be explicitly related to the roots of the zeta function of the curve over $\F_{q}$ in some particular cases.  A general conjecture relating these last two is presented and advances toward its proof are shown in the last section.
\end{abstract}

\maketitle

%\begin{IEEEkeywords}
%Random Matrices, Limiting Distribution, Gaussian Averages, MIMO Capacity, MMSE, Covariance Matrices
%\end{IEEEkeywords}

\section{Introduction}%

\vspace{0.3cm}
\par The problem of finding the number of solutions over a finite field of a polynomial equation has been of interest to mathematicians for many 
years. A typical result in this direction is the \emph{Hasse-Weil bound}, which states that a smooth projective curve 
of genus $g$ defined over a finite field with $q$ elements has between $q+1-2g\sqrt{q}$  and $q+1+2g\sqrt{q}$ points. A natural question to ask is whether there are simple formulas for counting points in terms of interesting mathematical objects.

\vspace{0.3cm}
Classical hypergeometric functions and their relations to counting points on curves over finite fields 
have been investigated by mathematicians since the beginnings of 1900. Recall that for $a_1, \dots , a_r, b_1, \dots , b_s$, $x \in \mathbb{C}$, the classical hypergeometric series is defined by 
\vspace{0.1cm}
\begin{equation}
 _{r}F_{s} \left( 
\begin{matrix}
a_1, & a_2, & \dots, & a_r \\
b_1, & b_2, & \dots, & b_s \\
\end{matrix}
\Bigg{\vert} x
\right) 
:= \sum_{k=0}^{\infty}\frac{(a_1)_k (a_2)_k \cdots (a_r)_k}{(b_1)_k (b_2)_k \cdots (b_s)_k }\frac{x^k}{k!}
\end{equation}
where $(a)_k:=a(a+1)\cdots (a+k-1)$ is the Pochhammer symbol.

\vspace{0.3cm}
Many connections between classical hypergeometric series, elliptic curves and modular forms have been discovered. For example, if we consider the Legendre family of elliptic curves given by $y^2=x(x-1)(x-t)$, $t\neq 0,1$, and denote
$$_2F_1[a,b;c|t]:={}_{2}F_{1} \left( 
\begin{matrix}
a, & b \\
   & c \\
\end{matrix}
\Bigg{\vert} t
\right),$$ the specialization 
$_2F_1[\frac{1}{2},\frac{1}{2};1|t]$ is a multiple of an elliptic integral which represents a period of the lattice 
associated to the previous family, as Kummer showed. 
%in 1836. 
For another examples, Beukers \cite{Be93} related a 
period of $y^2=x^3-x-t$ to the values  $_{2}F_{1}[\frac{1}{12},\frac{5}{12};\frac{1}{2}|\frac{27}{4}t^2]$.

\vspace{0.3cm}
\par In the 1980's, J. Greene \cite{Gr84,Gr87} initiated a study of hypergeometric series over finite fields. 
Let $q$ be a power of a prime, and let $\widehat{\mathbb{F}_{q}^{\times}}$ denote the group of multiplicative characters $\chi$ on $\F_{q}^{\times}$, extended to all of $\F_q$ by setting $\chi(0)=0$.  If $A, B \in  \widehat{\mathbb{F}_{q}^{\times}}$ we let the binomial coefficient be a Jacobi sum. Specifically,  
$$\binom{A}{B}:=\frac{B(-1)}{q}J(A,\overline{B})=\frac{B(-1)}{q} \sum_{x \in \widehat{\mathbb{F}_{q}}} A(x)\overline{B}(1-x)$$  
In this notation, we recall Greene's definition of hypergeometric functions over $\F_q$.
If $A_0,A_1,\dots,A_n,$ and $B_1,B_2,\dots,B_n$ are characters of $\widehat{\mathbb{F}_{q}^{\times}}$ and $x\in\F_q$, then the \emph{Gaussian hypergeometric function over $\F_{q}$} is defined by
\begin{equation}\label{hypergeometric function}
 _{n+1}F_{n} \left( 
\begin{matrix}
A_0, & A_1, & \dots, & A_n \\
     & B_1, & \dots, & B_n \\
\end{matrix}
\Bigg{\vert} x
\right) 
:= \frac{q}{q-1} \sum_{\chi\in\widehat{\mathbb{F}_{q}^{\times}}} \binom{A_0\chi}{\chi} \binom{A_1\chi}{B_1\chi} \dots \binom{A_n\chi}{B_n\chi} \chi(x)
\end{equation}
where $n$ is a positive integer.  
%(See Chapter \ref{preliminaries}, Section \ref{hpgf over fq} for more details.)

\par Greene explored the properties of these functions and found that they satisfy many summation and transformation formulas analogous to 
those satisfied by the classical functions. These similarities generated interest in finding connections that hypergeometric functions over finite fields may have with other objects, for example elliptic curves. In recent years, many results have been proved in this direction and as expected, certain families of elliptic curves are closely related to particular hypergeometric functions over finite fields. Motivated by these types of results, we have explored more relations between Gaussian hypergeometric functions and counting points on varieties over finite fields.

\vspace{0.3cm}
\par Throughout, let $\F_q$ denote the finite field with $q$ elements, where $q$ is some prime power.  For $z \in \F_{q}$ let $\mathcal{C}_{z}$ be the smooth projective curve with affine equation \begin{equation}\label{curve_ms}
\mathcal{C}_{z}: y^l=t^m(1-t)^s(1-zt)^m
\end{equation}
where $l \in \N$  and $1\leq m,s<l$ such that $m+s=l$.
% and denote by $\# \mathcal{C}_{z} (\F_{q})$ the number of points the curve has over $\F_{q}$. 
Our first result provides an explicit relation between the number of points on certain family of curves over finite fields and values of particular hypergeometric functions. 

\begin{teo} \label{thmformula}
Let $a=m/n$ and $b=s/r$ be rational numbers such that $0<a,b<1$, and let $z \in \F_{q}$, $z \neq 0,1$. Consider the smooth projective algebraic curve with affine equation given by
$$\mathcal{C}_{z}^{(a,b)}: y^l = t^{l(1-b)}(1-t)^{lb}(1-zt)^{la}$$ 
where $l:=\text{lcm}(n,r)$.
If $q \equiv 1 \pmod{l}$ then:
\begin{equation}\label{formula}
\# \mathcal{C}_{z}^{(a,b)}(\F_{q})= q+1+q\sum_{i=1}^{l-1} \eta_{q}^{ilb}(-1) \,{}_{2}F_{1} \left(
\begin{array}{ll|}
\eta_{q} ^{il(1-a)}, & \eta_{q} ^{il(1-b)} \\
     & \varepsilon \\
\end{array} \: z \right)
\end{equation}
where $\eta_{q} \in \widehat{\F_{q}^{\times}}$ is a character of order $l$, and $\# \mathcal{C}_{z}^{(a,b)}(\F_{q})$ denotes the number of points that the curve $\mathcal{C}_{z}^{(a,b)}$ has over $\F_q$.
\end{teo}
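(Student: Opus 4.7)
My approach is to count $\#\mathcal{C}_z^{(a,b)}(\F_q)$ by a character-sum manipulation and then translate each resulting sum into a value of Greene's $_2F_1$ via the finite-field analog of Euler's integral representation. Since $q \equiv 1 \pmod l$, fix a character $\eta = \eta_q \in \widehat{\F_q^\times}$ of exact order $l$. For any $\alpha \in \F_q^\times$, orthogonality gives $\#\{y \in \F_q : y^l = \alpha\} = \sum_{i=0}^{l-1}\eta^i(\alpha)$, while for $\alpha = 0$ there is the single solution $y = 0$. Writing $f(t) := t^{l(1-b)}(1-t)^{lb}(1-zt)^{la}$, the zero set of $f$ in $\F_q$ consists of the three points $0, 1, 1/z$ (since $z \neq 0, 1$). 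With the convention $\chi(0) = 0$ for every $\chi$, summing over $t \in \F_q$ and separating the trivial-character term yields
$$\#\mathcal{C}_z^{(a,b)}(\F_q)^{\mathrm{aff}} = q + \sum_{i=1}^{l-1} S_i, \qquad S_i := \sum_{t \in \F_q} \eta^{il(1-b)}(t)\,\eta^{ilb}(1-t)\,\eta^{ila}(1-zt).$$

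\textbf{Hypergeometric translation.} Set $A := \eta^{il(1-a)}$ and $B := \eta^{il(1-b)}$. Because $\eta$ has order $l$, one has $\eta^{il} = \varepsilon$, and therefore $\bar A = \eta^{ila}$ and $\bar B = \eta^{ilb}$; this lets me rewrite $S_i = \sum_t B(t)\,\bar B(1-t)\,\bar A(1-zt)$. I then invoke Greene's integral representation for $_2F_1$, which in the $C = \varepsilon$ case reads
$${}_2F_1\!\left(\begin{matrix} A, & B \\ & \varepsilon \end{matrix}\,\Big|\,z\right) = \frac{B(-1)}{q}\sum_{y \in \F_q} B(y)\,\bar B(1-y)\,\bar A(1-zy).$$
The right-hand sum is exactly $S_i$. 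Since $B(-1)^2 = B(1) = 1$ forces $B(-1) = \bar B(-1) = \eta^{ilb}(-1)$, rearranging gives $S_i = q\,\eta^{ilb}(-1)\,{}_2F_1(\eta^{il(1-a)}, \eta^{il(1-b)}; \varepsilon \mid z)$, and plugging back into the affine count reproduces the sum on the right of \eqref{formula}.

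\textbf{Point at infinity and main obstacle.} To match the remaining constant $+1$, a local analysis at $t = \infty$ (substitute $t = 1/s$ and $y = w/s^k$ with $k = \lceil \deg f / l \rceil$ and apply Newton-polygon or direct desingularization to $w^l = s^{lk - \deg f}\, g(s)$) shows that the smooth projective model contributes a single $\F_q$-rational point over infinity. The main technical input is Greene's integral identity: if not cited directly, it can be derived from the definition of $_2F_1$ by expanding both binomial coefficients as Jacobi sums, applying orthogonality $\sum_\chi \chi(u) = (q-1)\mathbf{1}_{u=1}$ to collapse an inner character sum, and parametrizing the resulting constraint $uvz = (1-u)(1-v)$ by a single variable. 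The careful bookkeeping of character signs (in particular that $B(-1) = \bar B(-1)$ because $(-1)^2 = 1$) together with the $\chi(0) = 0$ convention is the delicate point; once the identity is in hand, the identification $S_i \leftrightarrow {}_2F_1$ is a one-line substitution.
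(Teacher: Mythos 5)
Your proposal is essentially the paper's own proof: the same orthogonality-based affine count (the paper's Lemma on $\#\{x : x^n=a\}=\sum_{\chi^n=\varepsilon}\chi(a)$, with the $\chi(0)=0$ convention absorbing the roots $t=0,1,z^{-1}$ into the trivial-character term to give the $q$), followed by the same identification of each inner sum with Greene's Definition 3.5 character-sum form of ${}_2F_1$, using $\varepsilon(z)=1$ and $\eta^{il(1-b)}(-1)=\eta^{ilb}(-1)$. The only difference is cosmetic: the paper simply asserts the single point at infinity where you sketch a local analysis, so your write-up is, if anything, slightly more explicit on that step.
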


\vspace{0.4cm}
\par After recalling, in section \ref{recent history}, a few recent results that relate counting points on varieties over $\F_p$ to hypergeometric functions, we set up the necessary preliminaries in section \ref{preliminaries} and give the proof of Theorem \ref{thmformula} and some consequences of it in section \ref{hgf and ac}. Our next interest has been to find a closed formula for hypergeometric functions over finite fields, and more specifically, we have been interested in relating each particular term that appears in the right hand side of sum (\ref {formula}) to the curve $\mathcal{C}_{z}^{(a,b)}$.  Explicitly, in section \ref{mainconj} we state a conjecture that relates the values of  the hypergeometric functions appearing in (\ref {formula}) to counting points on the curves $\mathcal{C}_{z}^{(a,b)}$ over $\F_q$, and in sections \ref{proof l=3} and \ref{proof l=5} we prove this conjecture for some particular cases. These results give a closed formula for the values of hypergeometric functions over finite fields in terms of the traces of Frobenius of certain curves. Finally, in section \ref{generalconj} we show advances toward the proof of the conjecture in its full generality.
%we give a closed formula for the values of hypergeometric functions over finite fields in terms of the traces of Frobenius of certain curves.  Based on these results and many %computational examples done with Magma, in Section \ref{conjecture} we state a conjecture that relates  the values of  the hypergeometric functions appearing in (\ref {formula}) to %counting points on the curves $\mathcal{C}_{z}^{(a,b)}$ over $\F_q$. In this section we also show advances toward the proof of the conjecture in its full generality.\\
\vspace{0.3cm}
 
\section{Recent History}\label{recent history}

\vspace{0.3cm}
\par As mentioned at the beginning of this paper, following Greene's introduction of hypergeometric functions over $\F_q$ in the 1980s,
results emerged linking their values to counting points on varieties over $\F_q$. 

\vspace{0.3cm}

Consider the two families of elliptic curves over $\F_p$ defined by
\begin{align*}
E_1(t)&:y^2=x(x-1)(x-t),\,\,\,\, t\neq 0,1\\
E_2(t)&:y^2=(x-1)(x^2+t),\,\,\,\, t \neq 0,-1.
\end{align*}
Then, for $p$ and odd prime define the traces of Frobenius on the above families by

\begin{align}\label{traces Frobenius}
a_1(p,t)&=p+1- \#E_1(t)(\F_p) \nonumber\\
a_2(p,t)&=p+1- \#E_2(t)(\F_p)
\end{align}
where, for i=1,2
 $$\#E_{i}(t)(\F_{p}):=\#\{(x,y)\in E_{i}(t): x,y\in \F_{p}\}\cup \{P\}$$ 
denotes the number of points the curve $E_{i}(t)$ has over the finite field $\F_{p}$, with $P=[0:1:0]$ being the point at infinity.
Denote by $\phi$ and $\varepsilon$ the quadratic and trivial characters on $\F_{p}^{\times}$ respectively.
% i.e., for $a \in \F_{p}^{\times}$
%$$\phi(a)=
%\begin{cases}
%1 & \textnormal{if} \,\, x^{2}=a \,\,\text{is solvable in} \,\,\F_{p}\\
%-1 & \textnormal{if} \,\, x^{2}=a\,\,\text{is not solvable in} \,\,\F_{p}
%\end{cases}$$ 
%is the Legendre symbol, and 
%$$\varepsilon(a)=1.$$
Then, the families of elliptic curves defined above are closely related to particular hypergeometric functions over $\F_p$.  For example, $_2F_1[\phi,\phi;\varepsilon|t]$ arises in the formula for Fourier coefficients of a modular form associated to $E_1(t)$ \cite{Ko92,On98}.  Further, Koike and Ono, respectively, gave the following explicit relationships:

\begin{teo}[(1) Koike \cite{Ko92}, (2) Ono \cite{On98}] \label{koike_ono} Let $p$ be an odd prime. Then
\begin{enumerate}
\item  for $t \neq 0,1$:
%For the family of elliptic curves given by $E_1(t):y^2=x(x-1)(x-t)$, $t\neq 0,1$, then
$$p \, _{2}F_{1}\left(
\begin{matrix}
\phi,&\phi\\
     &\varepsilon
\end{matrix}
\Bigg{\vert} t
\right) =-\phi(-1) a_1(p,t)$$\\
%where $a_1(p,t) = p+1- \#E_1(t)(\F_p)$.
\item for $t \neq 0,-1$: %For the family of elliptic curves given by $E_2(t):y^2=(x-1)(x^2+t)$, $t\neq 0,-1$, then
$$p^2 \, 
 _{3}F_{2} \left( 
\begin{matrix}
\phi, & \phi, & \phi \\
     & \varepsilon, & \varepsilon \\
\end{matrix}
\Bigg{\vert} 1+\frac{1}{t}
\right) 
=
\phi(-t)(a_2(p,t)^2-p).$$
%where $a_2(p,t) = p+1- \#E_2(t)(\fp)$.
\end{enumerate}
\end{teo}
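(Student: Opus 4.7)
\textbf{Part (1)} should follow from Theorem \ref{thmformula} combined with a birational identification. Setting $l=2$, $a=b=1/2$, $m=s=1$ in (\ref{curve_ms}) gives the defining equation $y^2 = \tau(1-\tau)(1-z\tau)$; the substitution $\tau = 1/x$, $y = Y/x^2$ carries it to $Y^2 = x(x-1)(x-z)$, which is the Legendre curve $E_1(z)$. Since both curves are smooth projective of genus $1$, the birational map is an isomorphism and $\#\mathcal{C}_t^{(1/2,1/2)}(\F_p) = \#E_1(t)(\F_p)$. With $l=2$ the sum in (\ref{formula}) has only the single term $i=1$; the character $\eta_p$ is necessarily the quadratic character $\phi$, and each of the exponents $ilb$, $il(1-a)$, $il(1-b)$ equals $1$. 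Therefore the formula collapses to
$$\#E_1(t)(\F_p) = p + 1 + p\phi(-1)\,{}_2F_1[\phi,\phi;\varepsilon|t].$$
Substituting $\#E_1(t)(\F_p) = p+1-a_1(p,t)$ and using $\phi(-1)^{-1}=\phi(-1)$ immediately yields assertion~(1).

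\textbf{Part (2)} is more delicate because $E_2(t)$ does not belong to the family $\mathcal{C}_z^{(a,b)}$, so I would carry out a direct character-sum computation. The standard quadratic-character count gives $a_2(p,t) = -\sum_{x\in\F_p}\phi((x-1)(x^2+t))$, and squaring produces the double sum
$$a_2(p,t)^2 = \sum_{x,y\in\F_p} \phi((x-1)(y-1))\,\phi((x^2+t)(y^2+t)).$$
The algebraic identity $(x^2+t)(y^2+t) = (xy-t)^2 + t(x+y)^2$ invites the substitution $u = xy-t$, $v = x+y$ (a generically two-to-one map onto the locus where $v^2-4(u+t)$ is a non-zero square in $\F_p$), converting the second $\phi$-factor into $\phi(u^2+tv^2)$; on $u\neq 0$ this reduces to $\phi(1+t(v/u)^2)$. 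Separating the boundary contributions $u=0$ and $v=0$, then applying orthogonality of characters to expand each $\phi$ and invoking the Jacobi-sum formula for Greene's $\binom{A}{B}$, the double sum should rearrange into the shape $\phi(-t)^{-1}p^2\cdot{}_3F_2[\phi,\phi,\phi;\varepsilon,\varepsilon|1+1/t] + p$, where the additive $p$ arises from the trivial-character contribution. Solving for the hypergeometric function and using $\phi(-t)^{-1}=\phi(-t)$ yields assertion~(2).

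\textbf{Main obstacle.} The delicate step is the combinatorial identification in part~(2). The appearance of $1+1/t$ (rather than $t$) on the hypergeometric side signals that the sequence of substitutions must effectively invert and translate the parameter; tracing this consistently through the squared character sum, while correctly tracking the $\phi(-t)$ prefactor and the additive constant that separates $a_2(p,t)^2$ from $a_2(p,t)^2 - p$, demands careful bookkeeping of Jacobi and Gauss sums. The principal technical tools will be Greene's symmetry identities for $\binom{A}{B}$ (in particular $\binom{A}{B} = \binom{A}{A\overline{B}}$), together with the Hasse--Davenport product relation for Gauss sums, both of which are needed to collapse the three Jacobi-sum factors produced by orthogonality into the common form $\binom{\phi\chi}{\chi}^3$ appearing in Greene's definition of ${}_3F_2[\phi,\phi,\phi;\varepsilon,\varepsilon|\cdot]$.
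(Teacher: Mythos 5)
A preliminary remark: the paper does not prove Theorem \ref{koike_ono} at all --- it quotes the statement from Koike \cite{Ko92} and Ono \cite{On98} as background in Section \ref{recent history} --- so there is no internal proof to measure yours against. Your part (1) is correct and essentially complete, and it is a nice alternative route to Koike's identity: the $l=2$, $a=b=\tfrac12$ case of Theorem \ref{thmformula} concerns $y^2=\tau(1-\tau)(1-z\tau)$, which the substitution $\tau=1/x$, $y=Y/x^2$ identifies with the Legendre curve, and the single $i=1$ term of (\ref{formula}) gives exactly $p\,{}_2F_1[\phi,\phi;\varepsilon|t]=-\phi(-1)a_1(p,t)$ once you insert $\#E_1(t)(\F_p)=p+1-a_1(p,t)$. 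Since the proof of Theorem \ref{thmformula} is an independent character-sum computation, there is no circularity; the only step worth making explicit is that a birational map between smooth projective curves is an isomorphism, so the two counts (including the points at infinity) agree.

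Part (2), by contrast, contains a genuine gap. After the (correct) identity $(x^2+t)(y^2+t)=(xy-t)^2+t(x+y)^2$, the entire content of the theorem is compressed into the sentence that the double sum ``should rearrange into the shape $\phi(-t)^{-1}p^2\,{}_3F_2[\cdots]+p$''; this is asserted, not derived, and it is precisely the statement to be proved. Concretely, the change of variables $u=xy-t$, $v=x+y$ introduces the multiplicity factor $1+\phi\bigl(v^2-4(u+t)\bigr)$ counting preimages of $(u,v)$, the factor $\phi((x-1)(y-1))$ becomes $\phi(u-v+1+t)$, and one must then show that the resulting two-variable sum of products of quadratic characters collapses to Greene's $\frac{q}{q-1}\sum_{\chi}\binom{\phi\chi}{\chi}^3\chi\bigl(1+\tfrac1t\bigr)$ together with the prefactor $\phi(-t)$ and the additive constant separating $a_2(p,t)^2$ from $a_2(p,t)^2-p$. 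None of this is carried out, and it is exactly here that the shifted argument $1+1/t$ must emerge. To repair this you would need either to complete that computation in full detail or to follow Ono's actual route in \cite{On98}, which deduces the ${}_3F_2$ evaluation from Greene's transformation formulas and a prior evaluation of ${}_3F_2[\phi,\phi,\phi;\varepsilon,\varepsilon|\lambda]$ in terms of point counts, rather than by directly squaring the trace of Frobenius.
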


\vspace{0.3cm}
In addition, Frechette, Ono, and Papanikolas \cite{FOP04} gave explicit relations between the number of points over $\F_{p}$ in the more general varieties defined by
%counting points on more general varieties over $\F_{p}$ and hypergeometric functions over finite fields. Specifically, for $p$ an odd prime, $k\geq 4$ even, and the varieties defined by
%define three sequences of varieties $\mathcal{U}_{k}$, $\mathcal{V}_{k}$, and $\mathcal{W}_{k}$ by
\begin{align*}
\mathcal{U}_{k}&: y^2=\prod_{i=1}^{k-2}(x_{i}-1)(x_{i}^2+t),\\
\mathcal{V}_{k} &: y^2=\prod_{i=1}^{k-2}x_{i}(x_{i}-1)(x_{i}-t),\\
\mathcal{W}_{k} &: y^2=\prod_{i=1}^{k-2}x_{i}(x_{i}-1)(x_{i}-t^2)
\end{align*}
and the traces of Frobenius defined in (\ref{traces Frobenius}). (A different approach and other applications of these hypergeometric functions can be found in \cite{Ka90})

%Then, the number of points in $\mathcal{U}_{k}(\F_{p})$, $\mathcal{V}_{k}(\F_{p})$ and $\mathcal{W}_{k}(\F_{p})$ are directly related to values of certain hypergeometric functions over $\F_{p}$. In fact, they are related to the number of points in $E_{1}(\F_{p})$ and $E_{2}(\F_{p})$ which, by Theorem \ref{koike_ono}, are related to the hypergeometric functions. Specifically, they showed that:
%\begin{align*}
%\#\mathcal{U}_{k}(\F_{p})&=p^{k-1}+2+\sum_{t=1}^{p-2} a_{2}(p,t)^{k-2},\\
%\#\mathcal{V}_{k}(\F_{p})&=p^{k-1}+2+\sum_{t=2}^{p-1} a_{1}(p,t)^{k-2},\\
%\#\mathcal{W}_{k}(\F_{p})&=p^{k-1}+3+\sum_{t=2}^{p-1} (1+\phi(t))\,a_{1}(p,t)^{k-2}.
%\end{align*}\\

\section{{Preliminaries on multiplicative characters, hypergeometric functions and the Zeta function of a variety}\label{preliminaries}}

\vspace{0.3cm}
\par In this section we fix some notation and recall a few facts regarding multiplicative characters, hypergeometric functions and the Zeta function of a variety that will be needed in later sections.
%\vspace{0.3cm}
%\subsection{Multiplicative Characters}
%\vspace{0.3cm}
\par Let $p$ be a prime and let $\F_{q}$ be a finite field with $q$ elements, with $q=p^r$ for some positive integer $r$. We will denote by $\F_{q}^{\times}$ the multiplicative group of $\F_{q}$, i.e., $\F_{q}^{\times}=\F_{q}-\{0\}$. Now we state the \emph{orthogonality relations} for multiplicative characters, of which we will make use in section \ref{hgf and ac}. 

\begin{lemma}
 Let $\chi$ be a multiplicative character on $\fqm$. Then

$(a) \quad \displaystyle{\sum_{x\in\fq}\chi(x)=
\begin{cases}
q-1 & \textnormal{if} \,\, \chi=\varepsilon\\
0 & \textnormal{if} \,\, \chi\neq\varepsilon
\end{cases}}$

$(b) \quad \displaystyle{\sum_{\chi \in \fqmc}\chi(x)=
\begin{cases}
q-1 &  \textnormal{if} \,\, x=1\\
0 & \textnormal{if} \,\, x \neq 1.
\end{cases}}$
\end{lemma}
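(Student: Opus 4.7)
The plan is to prove both parts by the standard ``change of variables'' trick, which exploits the group structure on $\fqm$ (for part (a)) and on the dual group $\fqmc$ (for part (b)). Throughout, I use only the convention that $\chi(0)=0$ and the group laws for characters.

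For part (a), I would first dispose of the trivial case: if $\chi=\varepsilon$, then $\chi(x)=1$ for every $x\in\fqm$ and $\chi(0)=0$, so the sum collapses to $q-1$. If $\chi\neq\varepsilon$, I choose $y\in\fqm$ with $\chi(y)\neq 1$ (such a $y$ exists by definition of being nontrivial) and set $S=\sum_{x\in\fq}\chi(x)=\sum_{x\in\fqm}\chi(x)$ (the contribution at $0$ vanishes). Substituting $x\mapsto yx$, which is a bijection of $\fqm$, gives $S=\sum_{x\in\fqm}\chi(yx)=\chi(y)\sum_{x\in\fqm}\chi(x)=\chi(y)S$. Since $\chi(y)\neq 1$, this forces $S=0$.

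For part (b), I would argue symmetrically. If $x=0$ then every character vanishes at $x$, so the sum is $0$. If $x=1$ then $\chi(1)=1$ for every $\chi$, and since $\fqm$ is cyclic of order $q-1$ its character group $\fqmc$ also has order $q-1$, giving the sum $q-1$. If $x\in\fqm$ with $x\neq 1$, the key point is to produce a character $\psi\in\fqmc$ with $\psi(x)\neq 1$; granted this, the same translation trick on $\fqmc$ yields $T:=\sum_\chi\chi(x)=\sum_\chi(\psi\chi)(x)=\psi(x)T$, forcing $T=0$.

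The only nontrivial step is producing such a $\psi$, and this is the main technical point of the lemma. I would handle it by fixing a generator $g$ of the cyclic group $\fqm$, writing $x=g^k$ with $1\le k\le q-2$ (since $x\neq 1$), and defining $\psi$ by $\psi(g)=\zeta$, where $\zeta$ is a primitive $(q-1)$-th root of unity in $\C$. Then $\psi(x)=\zeta^k\neq 1$ because $q-1\nmid k$. This generator argument also reconfirms $|\fqmc|=q-1$, tying together the two cases. Once these two translation arguments are in place, the lemma follows in one line each.
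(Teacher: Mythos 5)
Your proof is correct and complete: the translation trick $x\mapsto yx$ on $\fqm$ for part (a), the dual trick $\chi\mapsto\psi\chi$ on $\fqmc$ for part (b), and the explicit construction of $\psi$ via a generator of the cyclic group $\fqm$ are all sound, and you correctly handle the edge cases $x=0$ and $\chi(0)=0$. The paper itself states this lemma without proof (it is the standard orthogonality relation, as in Ireland--Rosen), so there is nothing to compare against; your argument is the standard one and fills the gap properly.
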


We also require a few properties of hypergeometric functions over $\F_p$ that Greene proved in \cite{Gr87}.
Greene defined  the \emph{Gaussian hypergeometric functions over $\fq$} as the following character sum:

\begin{df}[\cite{Gr87} Defn. 3.5] \label{gauss_hpgf}
For characters $A, B, C \in \widehat{\mathbb{F}_{q}^{\times}}$ and $x \in \fq$

\vspace{0.1cm}
\begin{equation}
_{2}F_{1} \left( 
\begin{matrix}
A, & B \\
   & C \\
\end{matrix}
\Bigg{\vert} x
\right) 
:= \varepsilon(x)\frac{BC(-1)}{q} \sum_{y \in \fq}B(y)\overline{B}C(1-y)\overline{A}(1-xy).
\end{equation} 
\end{df}

\vspace{0.3cm}

More generally, Greene proved the following theorem which connects these functions to Jacobi sums, and 
extended the previous definition to a higher number of multiplicative characters (see formula (\ref{hypergeometric function})).

\begin{teo}[\cite{Gr87} Theorem 3.6]
For characters $A, B, C \in \widehat{\mathbb{F}_{q}^{\times}} $ and $x \in \fq$,
\vspace{0.1cm}
$$_{2}F_{1} \left( 
\begin{matrix}
A, & B \\
   & C \\
\end{matrix}
\Bigg{\vert} x
\right) 
=\frac{q}{q-1}\sum_{\chi \in \fqmc}\binom{A\chi}{\chi}\binom{B\chi}{C\chi}\chi(x).$$
\end{teo}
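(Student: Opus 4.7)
The strategy is to begin with Definition 3.5 and transform it into the right-hand side of the theorem by Fourier-expanding one of the character factors in the $y$-sum.

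The first step is to apply character orthogonality on $\F_q^\times$ to expand $\bar A(1-xy)$ as a linear combination of characters of $y$. For every $u \in \F_q^\times$ one has
\[
\bar A(1-u) = \frac{1}{q-1}\sum_{\chi \in \widehat{\F_q^\times}} J(\bar\chi, \bar A)\, \chi(u),
\]
since the Fourier coefficients of the function $u \mapsto \bar A(1-u)$ on $\F_q^\times$ are exactly $\frac{1}{q-1} J(\bar\chi, \bar A)$; the $y=0$ contribution in Definition 3.5 already vanishes because of the factor $B(y)$. Substituting this expansion and swapping the $y$ and $\chi$ summations collapses the inner $y$-sum into a second Jacobi sum, producing
\[
{}_2F_1\!\left(
\begin{matrix}
A, & B \\
   & C
\end{matrix}
\,\Big|\, x
\right) \;=\; \varepsilon(x)\,\frac{BC(-1)}{q(q-1)} \sum_{\chi} J(\bar\chi, \bar A)\, J(B\chi, \bar B C)\, \chi(x).
\]

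The second step is to reshape the two Jacobi sums into the form required by the binomial coefficients $\binom{A\chi}{\chi}$ and $\binom{B\chi}{C\chi}$. The involution $u \mapsto (u-1)/u$ on $\F_q\setminus\{0,1\}$ swaps $u$ and $1-u$ up to a sign, and short direct computations give
\[
J(\bar\chi, \bar A) \;=\; \chi(-1)\, J(A\chi, \bar\chi), \qquad J(B\chi, \bar B C) \;=\; B(-1)\,\chi(-1)\, J(B\chi, \bar C \bar\chi).
\]
Collecting sign characters via $\chi(-1)^2 = 1$ and $BC(-1)\cdot B(-1) = C(-1)$ simplifies the prefactor to $\varepsilon(x)\, C(-1)/(q(q-1))$.

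Finally, converting the reshaped Jacobi sums back to binomial coefficients via $J(A, \bar B) = q B(-1)\binom{A}{B}$ yields $J(A\chi, \bar\chi) = q\chi(-1)\binom{A\chi}{\chi}$ and $J(B\chi, \bar C \bar\chi) = qC\chi(-1)\binom{B\chi}{C\chi}$. Their product contributes a factor $q^2 C(-1)$, which combines with the prefactor and with $\varepsilon(x) = 1$ for $x \neq 0$ to give exactly $\frac{q}{q-1}$, producing the claimed identity; the case $x = 0$ is trivial because $\chi(0)=0$ makes both sides vanish. I expect the main obstacle to be the second step, namely spotting the correct involution of the summation variable (here $u \mapsto (u-1)/u$) that cleanly interchanges the roles of $\bar\chi$ and $\bar A$ inside each Jacobi sum without producing extraneous factors; once that move is in hand, the rest is careful bookkeeping of sign characters and the Jacobi-to-binomial dictionary.
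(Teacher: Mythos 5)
The paper does not actually prove this statement: it is quoted verbatim from Greene \cite{Gr87} (Theorem 3.6) and used as a black box, so there is no in-paper argument to compare against. Your proof is correct and self-contained, and it is essentially Greene's original derivation: Fourier-expand $\overline{A}(1-xy)$ on $\F_q^{\times}$ so that the $y$-sum collapses to a second Jacobi sum, then normalize both Jacobi sums into binomial coefficients. All the sign bookkeeping checks out, using $\chi(-1)^2=1$, $B(-1)^2=C(-1)^2=1$, and the dictionary $J(A,\overline{B})=qB(-1)\binom{A}{B}$; the degenerate cases cause no trouble because the paper's convention $\chi(0)=0$ (including $\varepsilon(0)=0$) makes every Jacobi sum a sum over $\F_q\setminus\{0,1\}$, so the change-of-variable identities hold term by term, and both sides vanish at $x=0$. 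One small correction: the map $u\mapsto(u-1)/u$ is not an involution (it has order $3$ in $\mathrm{PGL}_2$); the involution that directly yields $J(\chi,\lambda)=\chi(-1)J(\chi,\overline{\chi\lambda})$ is $u\mapsto u/(u-1)$. Your stated Jacobi-sum identities are nonetheless correct, so this is only a labeling slip, not a gap.
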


A comprehensive introduction to these functions can be found in Greene's paper \cite{Gr87}, where he presented many properties and transformation identities they satisfy. One transformation that is of interest to us is presented in the next theorem, and it allows to replace the arguments $A, B \in \fqmc$ by $\overline{A}, \overline{B}$ respectively.

\begin{teo}[\cite{Gr87} Theorem 4.4]
 If $A,B,C \in \fqmc$ and $x \in \fq$, then
\begin{align}\label{conjugate_hyperg_function}
 _{2}F_{1} \left( 
\begin{matrix}
A, & B \\
   & C \\
\end{matrix}
\Bigg{\vert} x
\right) 
&= C(-1)C\,\overline{AB}(1-x)\,_{2}F_{1} \left( 
\begin{matrix}
C\overline{A}, & C\overline{B} \\
   & C \\
\end{matrix}
\Bigg{\vert} x
\right) \nonumber\\
& \qquad + A(-1)\binom{B}{\overline{A}C}\delta(1-x)
\end{align}
where $\delta(x)= 
\begin{cases}
 1 &  \textnormal{if} \,\, x=0\\
0 & \textnormal{if} \,\, x \neq 0.
\end{cases}
$
\end{teo}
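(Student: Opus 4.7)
The plan is to establish the identity as the finite-field analog of the classical Pfaff transformation ${}_2F_1(a,b;c|x) = (1-x)^{c-a-b}\,{}_2F_1(c-a,c-b;c|x)$, using the character-sum representation in Definition \ref{gauss_hpgf}. First, I would expand ${}_2F_1(C\overline{A}, C\overline{B}; C|x)$ via that definition; using the simplifications $\overline{C\overline{B}}\cdot C = B$, $\overline{C\overline{A}} = A\overline{C}$, and $C(-1)^2=1$, the prefactor reduces to $\varepsilon(x)B(-1)/q$ and the summand becomes $(C\overline{B})(y)\,B(1-y)\,(A\overline{C})(1-xy)$.

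For $x\neq 1$, I would then apply the M\"obius substitution $y = (1-u)/(1-xu)$, which is a bijection of $\F_q\setminus\{1/x\}$ to itself satisfying
$$1-y = \frac{u(1-x)}{1-xu}, \qquad 1-xy = \frac{1-x}{1-xu}.$$
After substituting and pooling the character factors by argument, the $(1-xu)$ exponents telescope into $\overline{A}(1-xu)$, the $u$ and $1-u$ dependencies reassemble into $B(u)$ and $(\overline{B}C)(1-u)$, and the $(1-x)$ pieces collect to $(AB\overline{C})(1-x)$. The missing point $u=1/x$ contributes zero since $\overline{A}(0)=0$, and similarly the missing image point $y=1/x$ in the original sum contributes zero. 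This yields
$${}_2F_1(C\overline{A}, C\overline{B}; C|x) = C(-1)\,(AB\overline{C})(1-x)\cdot {}_2F_1(A,B;C|x),$$
and multiplying both sides by $C(-1)(C\overline{AB})(1-x)$ produces the stated identity, since $(AB\overline{C})(1-x)(C\overline{AB})(1-x) = \varepsilon(1-x)=1$ for $x\neq 1$, which also kills the delta term.

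For $x=1$ the M\"obius substitution degenerates and the main term on the right-hand side vanishes because $C\overline{AB}(0)=0$, so I would evaluate both sides directly. Definition \ref{gauss_hpgf} at $x=1$ collapses the two factors in $1-y$ into one, giving ${}_2F_1(A,B;C|1) = \frac{BC(-1)}{q}J(B,\overline{AB}C)$. Using the Gauss-sum identity $J(\chi,\psi)=g(\chi)g(\psi)/g(\chi\psi)$ (for $\chi\psi\neq\varepsilon$) together with $g(\chi)g(\overline{\chi}) = \chi(-1)q$ yields $J(B,\overline{AB}C) = B(-1)J(B,A\overline{C})$; combining this with the definition $\binom{B}{\overline{A}C} = \frac{(A\overline{C})(-1)}{q}J(B,A\overline{C})$ produces exactly $A(-1)\binom{B}{\overline{A}C}$, matching the residual term. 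The main obstacle is character bookkeeping --- correctly pooling the many factors produced by the M\"obius substitution (especially the sign factors $B(-1),C(-1)$) and using the Gauss-sum reflection identity at $x=1$ to identify the Jacobi sum with the binomial coefficient $\binom{B}{\overline{A}C}$ times $A(-1)$.
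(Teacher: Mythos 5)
The paper does not prove this statement; it is quoted verbatim from Greene (\cite{Gr87}, Theorem 4.4), so there is no in-paper argument to compare against. Your reconstruction is correct and is essentially Greene's own: expanding ${}_{2}F_{1}(C\overline{A},C\overline{B};C\,|\,x)$ via Definition \ref{gauss_hpgf} and applying the substitution $y=(1-u)/(1-xu)$ does yield $C(-1)\,(AB\overline{C})(1-x)\cdot{}_{2}F_{1}(A,B;C\,|\,x)$ for $x\neq 0,1$ (the two omitted points each contribute $0$ because every character, including $\varepsilon$, is extended by $\chi(0)=0$), and multiplying by $C(-1)\,C\overline{AB}(1-x)$ recovers the stated identity since $\varepsilon(1-x)=1$; the case $x=0$ is trivial because the prefactor $\varepsilon(x)$ kills both sides. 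Your $x=1$ computation reducing both sides to Jacobi sums and invoking $J(\chi,\psi)=g(\chi)g(\psi)/g(\chi\psi)$ together with $g(\chi)g(\overline{\chi})=\chi(-1)q$ is also right, and I verified the sign bookkeeping: $B(-1)J(B,\overline{AB}C)=J(B,A\overline{C})$ and $(\overline{A}C)(-1)=(A\overline{C})(-1)$, so the residual term is exactly $A(-1)\binom{B}{\overline{A}C}$. The only loose end is that the Gauss-sum identities you invoke at $x=1$ fail in the degenerate cases $A=C$, $AB=C$, or $B=\varepsilon$; these must be checked directly (e.g.\ for $A=C$ and $B\neq\varepsilon$ one has $J(B,\overline{B})=-B(-1)$ and $J(B,\varepsilon)=-1$, and both sides equal $-C(-1)/q$), but this is routine and does not affect the soundness of the argument.
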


In particular, when $A$ and $B$ are inverses of each other and $C=\varepsilon$ we get the following result.

\begin{cor}\label{conjugate_hpgf}
 Let $A \in \fqmc$ and $x \in \fq\backslash \{1\}$. Then
$$_{2}F_{1} \left( 
\begin{matrix}
A, & \overline{A} \\
   & \varepsilon \\
\end{matrix}
\Bigg{\vert} x
\right) 
={}_{2}F_{1} \left( 
\begin{matrix}
\overline{A}, & A \\
   & \varepsilon \\
\end{matrix}
\Bigg{\vert} x
\right) $$
\begin{proof}
 Just notice that, since $x \neq 1$ then the last term in the right hand side of (\ref{conjugate_hyperg_function}) vanishes, and $A\overline{A}(1-x)=1$.
\end{proof}
\end{cor}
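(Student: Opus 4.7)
The plan is to obtain this as a direct specialization of the preceding transformation identity (Theorem [Gr87] 4.4), with no further computation needed. I would set $B=\overline{A}$ and $C=\varepsilon$ in that identity and read off what each factor becomes.

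First I would handle the error term $A(-1)\binom{\overline{A}}{\overline{A}\varepsilon}\delta(1-x)$. Since $x\in\F_q\setminus\{1\}$, we have $1-x\neq 0$, so $\delta(1-x)=0$ and this whole term disappears. This is the only place where the hypothesis $x\neq 1$ is used, and it is essential, since otherwise there is a Jacobi-sum correction.

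Next I would simplify the main coefficient $C(-1)\,C\,\overline{AB}(1-x)$. With $C=\varepsilon$, the factor $\varepsilon(-1)=1$. For the other factor, $C\overline{AB}=\varepsilon\cdot\overline{A\overline{A}}=\varepsilon\cdot\overline{\varepsilon}=\varepsilon$ as a character of $\F_q^\times$, so $\varepsilon(1-x)=1$ (again using $x\neq 1$ to guarantee $1-x\in\F_q^\times$). Finally, the upper parameters on the right-hand side become $C\overline{A}=\overline{A}$ and $C\overline{B}=\overline{\overline{A}}=A$, while the lower parameter stays $\varepsilon$. Collecting everything gives exactly the desired equality of ${}_2F_1$'s.

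I do not expect any real obstacle: the result is essentially bookkeeping, relying only on (i) the specific choice of characters collapsing the twist factor to $1$, and (ii) the hypothesis $x\neq 1$ killing both the $\delta$ term and any concern about $\varepsilon$ being evaluated at $0$. The only minor subtlety is remembering that $\varepsilon(0)=0$ by the extension convention, which is precisely why the assumption $x\neq 1$ cannot be dropped.
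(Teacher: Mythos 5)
Your proposal is correct and matches the paper's proof: both specialize the transformation identity (Theorem 4.4 of Greene) with $B=\overline{A}$, $C=\varepsilon$, note that $x\neq 1$ kills the $\delta$-term, and observe that the twist factor $A\overline{A}(1-x)=\varepsilon(1-x)=1$. Your extra remark about the convention $\varepsilon(0)=0$ is a fair clarification of why the hypothesis $x\neq 1$ is needed, but the argument is the same.
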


Now, recall that the \emph{Zeta function of a projective variety} is a generating function for the number of solutions of a set of polynomial equations defined over a finite field $\F_{q}$, in finite extension fields $\F_{q^n}$ of $\F_{q}$. In this way, we collect all the information about counting points into a single object. 

\vspace{0.3cm}

%Again, let $p$ be an odd prime and let $\fq$ denote the finite field with $q$ elements where $q=p^r$ for some positive integer $r$.
%Let $\mathcal{V}$ be a projective variety, so $\mathcal{V}$ is the zero-set 
%$$f_1(x_{0},\dots,x_{N})=\cdots = f_{m}(x_{0}, \dots ,x_{N})=0$$
%of a collection of homogeneous polynomials with coefficients in $\mathbb{F}_{q}$. Denote by 
%$\mathcal{V}(\mathbb{F}_{q^{n}})$ the set of points of $\mathcal{V}$ with coordinates in $\mathbb{F}_{q^{n}}$, where 
%$\mathbb{F}_{q^{n}}$ is the field extension of degree $n$ of $\fq$.

\begin{df}
Let $\mathcal{V}$ be a projective variety. The zeta function of $\mathcal{V}/\fq$ is the power series
$$Z(\mathcal{V}/\fq; T):= \text{exp} \left( \sum_{n=1}^{\infty} \# \mathcal{V}(\mathbb{F}_{q^n})\frac{T^n}{n}
\right) \in \mathbb{Q}[[T]].$$
\end{df}
%\noindent (Here if $F(T) \in \Q [[T]]$ is a power series with no constant term, then exp$(F(T))$ is the power series $\sum_{i=0}^{\infty}F(T)^{i}/i!$). Thus, the zeta function %$Z(\mathcal{V}/\fq; T)$ associated to $\mathcal{V}$ contains all the information concerning the number of points of $\mathcal{V}$ over each field extension of $\fq$ of finite %degree. 

%\begin{obs} Notice that, once we know $Z(\mathcal{V}/\fq; T)$, it is not hard to recover the numbers $\#\mathcal{V}(\mathbb{F}_{q^n})$ by the formula
%$$\#\mathcal{V}(\mathbb{F}_{q^n})=\frac{1}{(n-1)!}\frac{d^n}{dT^n}\log Z(\mathcal{V}/\fq;T)
%\bigg\arrowvert _{T=0}.$$
%\end{obs}

\vspace{0.3cm}
 In 1949, Andr\'e Weil \cite {We49} made a series of conjectures concerning the number of points on 
varieties defined over finite fields which were proved by Weil, Dwork \cite {Dw60} and Deligne \cite{De74} in later years.
 %[see \cite{Fu69}]. 
Applying these conjectures to a smooth projective curve $\mathcal{V}$ of genus $g$ defined over $\fq$, we obtain that
\begin{equation}\label{zetafunction}
Z(\mathcal{V}/\fq;T)=\frac{(1-\alpha_{1}T)(1-\overline{\alpha_{1}}T)\cdots(1-\alpha_{g}T)
(1-\overline{\alpha_{g}}T)}{(1-T)(1-qT)}
\end{equation}
\vspace{0.2cm}
where $\alpha_{i} \in \C, \,\, |\alpha_{i}|=\sqrt{q}$ for all $i=1,\dots, g$. 
%Denote $a_{i}:=\alpha_{i}+\bar{\alpha_{i}}$ for $1\leq i \leq g$.
In this case we have a beautiful formula for counting points on $\mathcal{V}$ over $\mathbb{F}_{q^n}$, namely
\begin{equation}\label{formulapuntos}
\#\mathcal{V}(\mathbb{F}_{q^n})=q^n+1-\sum_{i=1}^{g}(\alpha_{i}^n+\overline{\alpha_{i}}^n)
\end{equation}

\noindent (For details see \cite{IR90}) We will make strong use of formulas (\ref{zetafunction}) and (\ref{formulapuntos}) applied to particular families of curves to prove the results in the following sections.\\
\vspace{0.3cm}

\section{Counting Points on Families of Curves over Finite Fields}\label{hgf and ac}
\vspace{0.4cm}

\par 
We consider the problem of connecting the number of points that certain families of curves have over finite fields to values of particular hypergeometric functions over finite fields. Throughout, let $\fq$ denote the finite field with $q$ elements, where $q$ is some prime power. We start with a result that allows to count the number of solutions of a particular equation by using multiplicative characters on $\fq$.

\begin{lemma}\label{countingwithsum}
Let $q$ be a prime and $a \in \fq\backslash \{0\}$. If $n | (q-1)$ then
$$\#\{x \in \F_{q} : x^n = a \} = \sum_{\chi ^n =\varepsilon} \chi(a)$$
where the sum runs over all characters $\chi \in \fqmc$ of order dividing $n$.

\begin{proof}
We start by seeing that there are exactly $n$ characters of order dividing $n$. Let $\chi : \fqm \to \C^{\times}$ be a character such that $\chi^{n}=\varepsilon$ and let $g \in \fqm$ be a generator. Since $\chi ^n = \varepsilon$, the value of $\chi(g)$ must be an $n$th root of unity, hence there are at most $n$ such characters. Consider $\chi \in \fqmc$ defined by $\chi(g)=e^{2\pi i/n}$ (i.e. $\chi(g^k)=e^{2\pi ik/n}$).  It is easy to see that $\chi$ is a character and $\varepsilon, \chi, \chi^2, \cdots , \chi^{n-1}$ are $n$ distinct characters of order dividing $n$. Therefore, there are exactly $n$ characters of order dividing $n$.

\vspace{0.1in}
Now let $a \neq 0$ and suppose that $x^n=a$ is solvable; i.e., there is an element $b \in \F_{q}$ such that $b^n=a$. Since $\chi^n=\varepsilon$ we have that $\chi(a)=\chi(b^n)=\chi(b)^n=1$. Thus $$\sum_{\chi^n=\varepsilon} \chi(a)=\sum_{\chi^n=\varepsilon} 1 =n$$ 
Also notice that in this case, $\#\{x \in \F_{q} : x^n = a \}=n$ because if $x^n =a \pmod{q}$ is solvable then there exist exactly gcd$(n,\varphi(q))$ solutions, where $\varphi$ denotes the Euler function. But since $\varphi(q)=q-1$ and $n|(q-1)$ it follows that gcd$(n,q-1)=n$ (for a proof of this result see \cite{IR90} Proposition 4.2.1).

\vspace{0.1in}
To finish the proof we need to consider the case when $x^n=a$ is not solvable, in which case $\#\{x \in \F_{q} : x^n = a \}=0$.
Call $T:=\sum_{\chi^n=\varepsilon} \chi(a)$. Since $x^n =a$ is not solvable, there exist a character $\rho$ such that $\rho ^n=\varepsilon$ and $\rho (a)\neq 1$ (take $\rho (g)=e^{2\pi i/n}$ where $\langle g\rangle=\fqm$). Since the characters of order dividing $n$ form a group, it follows that $\rho (a)T=T$. Then $(\rho (a)-1)T=0$ which implies that $T=0$ since $\rho (a)\neq 1$.
\end{proof}
\end{lemma}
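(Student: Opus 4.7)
The plan is to split into two cases based on whether the equation $x^n = a$ has a solution in $\fq$. Before doing this, I would first count the characters of order dividing $n$. Since $\fqm$ is cyclic of order $q-1$, fixing a generator $g$, every character $\chi \in \fqmc$ is determined by its value $\chi(g)$. The condition $\chi^n = \varepsilon$ forces $\chi(g)$ to be an $n$-th root of unity in $\C^{\times}$, so there are at most $n$ such characters. Since $n \mid (q-1)$, each of the $n$ choices $\chi(g) := e^{2\pi i k/n}$ for $k = 0, 1, \dots, n-1$ extends (via $\chi(g^j) = e^{2\pi i k j/n}$) to a well-defined character of $\fqm$, giving exactly $n$ characters of order dividing $n$, which moreover form a cyclic subgroup of $\fqmc$.

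Case 1: $x^n = a$ is solvable, say $a = b^n$. Then for every $\chi$ with $\chi^n = \varepsilon$, we have $\chi(a) = \chi(b)^n = 1$, so the right-hand side is $n$. For the left-hand side, once one solution $b$ exists, every other solution has the form $b\zeta$ with $\zeta^n = 1$ in $\fqm$; the $n$-th roots of unity in $\fqm$ form a cyclic subgroup of order $\gcd(n, q-1) = n$, so the number of solutions is exactly $n$.

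Case 2: $x^n = a$ is not solvable, so the left-hand side is $0$. To show the right-hand side is also $0$, I would exhibit a character $\rho$ with $\rho^n = \varepsilon$ and $\rho(a) \neq 1$: writing $a = g^k$, non-solvability of $x^n = a$ in $\fqm$ is equivalent to $n \nmid k$, so $\rho$ defined by $\rho(g) := e^{2\pi i / n}$ satisfies $\rho(a) = e^{2\pi i k / n} \neq 1$. Setting $T := \sum_{\chi^n = \varepsilon} \chi(a)$ and using that the characters of order dividing $n$ form a group (so $\chi \mapsto \rho\chi$ is a bijection on them), one gets $\rho(a) T = T$, and since $\rho(a) \neq 1$ this forces $T = 0$.

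The argument is entirely elementary; the only mildly delicate point is producing the character $\rho$ in Case 2, which reduces to the explicit identification of $n$-th powers in the cyclic group $\fqm$ with the subgroup $\{g^{nj}\}$. Once that is in hand, the orbit argument with $T$ closes out the proof.
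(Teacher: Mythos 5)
Your proof is correct and follows essentially the same route as the paper: count the $n$ characters of order dividing $n$ via a generator of $\fqm$, show both sides equal $n$ in the solvable case, and use the orbit argument $\rho(a)T = T$ with a character $\rho$ satisfying $\rho(a)\neq 1$ in the non-solvable case. Your justification that $\rho(a)\neq 1$ (via $a = g^k$ with $n\nmid k$) is in fact slightly more explicit than the paper's, but the argument is the same.
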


\vspace{0.1cm}

We are now ready to prove Theorem \ref{thmformula}.
\vspace{0.3cm}
%Similar to the results given in Section \ref{intro}, the following theorem provides an explicit relation between the number of points on certain family of curves over finite fields and %values of particular hypergeometric functions. 

%%%%%%%%%%%%%%%%%%%%%%%%%%%%%%%%%%%%%%%%%%%%%%
%%formula for counting points on curve%%%%%%%%
%%%%%%%%%%%%%%%%%%%%%%%%%%%%%%%%%%%%%%%%%%%%%%

%\begin{teo} \label{thmformula}
%Let $a=m/n$ and $b=s/r$ be rational numbers such that $0<a,b<1$, and let $z \in \F_{q}$, $z \neq 0,1$. Consider the smooth projective algebraic curve with affine equation given by
%$$\mathcal{C}_{z}^{(a,b)}: y^l = t^{l(1-b)}(1-t)^{lb}(1-zt)^{la}$$ 
%where $l:=\text{lcm}(n,r)$.
%If $q \equiv 1 \pmod{l}$ then:
%\begin{equation}\label{formula}
%\# \mathcal{C}_{z}^{(a,b)}(\F_{q})= q+1+q\sum_{i=1}^{l-1} \eta_{q}^{ilb}(-1) \,{}_{2}F_{1} \left(
%\begin{array}{ll|}
%\eta_{q} ^{il(1-a)}, & \eta_{q} ^{il(1-b)} \\
 %    & \varepsilon \\
%\end{array} \: z \right)
%\end{equation}
%where $\eta_{q} \in \fqmc$ is a character of order $l$, and $\# \mathcal{C}_{z}^{(a,b)}(\F_{q})$ denotes the number of points that the curve $\mathcal{C}_{z}^{(a,b)}$ has over $\fq$.

%%%%%%%%%%%%%%%%%%%%%%%%%%%%%%%%%%
%%%%% PRUEBA THEOREMA FORMULA %%%%%%%
%%%%%%%%%%%%%%%%%%%%%%%%%%%%%%%%%

\begin{proof}[Proof of Theorem \ref{thmformula}]
To simplify the notation, we will denote the curve $\mathcal{C}_{z}^{(a,b)}=\mathcal{C}_{z}$. Since $\fqmc$ is a cyclic group of order $q-1$ and $l | (q-1)$ there exists a character $\eta_{q} \in \fqmc$ of order $l$. 
Recall that $\mathcal{C}_{z}$ is a projective curve, so adding the point at infinity we have
$$\# \mathcal{C}_{z}(\F_{q})  = 1+ \sum_{t \in \F_{q}}\# \{ y \in \F_{q} : y^l = t^{l(1-b)}(1-t)^{lb}(1-zt)^{la} \}$$

\noindent Breaking the sum and applying Lemma \ref{countingwithsum} we see that:
\allowdisplaybreaks{
\begin{align}\label{suma}
\# \mathcal{C}_{z}(\F_{q}) & = 1+ \sum_{\substack {t \in \F_{q} \\ t^{l(1-b)}(1-t)^{lb}(1-zt)^{la}\neq 0}} \# \{ y \in \F_{q} : y^l = t^{l(1-b)}(1-t)^{lb}(1-zt)^{la} \}\nonumber\\  
&\qquad + \# \{ t \in \F_{q} : t^{l(1-b)}(1-t)^{lb}(1-zt)^{la}=0 \}\nonumber\\
  & = 1+ \sum_{t \in \F_{q}} \sum_{i=0}^{l-1} \eta_{q}^i (t^{l(1-b)}(1-t)^{lb}(1-zt)^{la}) \hspace{1.5in} \text{(Lemma \ref{countingwithsum})}\nonumber\\ 
  &\qquad + \# \{ t \in \F_{q} : t^{l(1-b)}(1-t)^{lb}(1-zt)^{la}=0 \}.\nonumber
\end{align}
}
\noindent Now, by separating the sum according to whether $i=0$, and collecting the second and last terms into a single one we have
\begin{align}
 \#\mathcal{C}_{z}(\F_{q}) & = 1+ \sum_{t \in \F_{q}} \varepsilon(t^{l(1-b)}(1-t)^{lb}(1-zt)^{la}) + \sum_{t \in \F_{q}} \sum_{i=1}^{l-1} \eta_{q}^i (t^{l(1-b)}(1-t)^{lb}(1-zt)^{la})\nonumber\\
  &\qquad + \# \{ t \in \F_{q} : t^{l(1-b)}(1-t)^{lb}(1-zt)^{la}=0 \}\nonumber\\
  & = 1+ q + \sum_{t \in \F_{q}} \sum_{i=1}^{l-1} \eta_{q}^i (t^{l(1-b)}(1-t)^{lb}(1-zt)^{la})\nonumber \\
%  & = 1+ q + \sum_{t \in \F_{q}} \sum_{i=1}^{l-1}\eta_{q}^{il(1-b)}(t)\eta_{q}^{ilb}(1-t)\eta_{q}^{la}(1-zt)\nonumber\\
  & = 1+ q + \sum_{i=1}^{l-1} \sum_{t \in \fq}\eta_{q}^{il(1-b)}(t)\,\eta_{q}^{ilb}(1-t)\,\eta_{q}^{la}(1-zt).
\end{align}
The last equality follows from the multiplicativity of $\eta_{q}$ and switching the order of summation.

\vspace{0.3cm}

\noindent On the other hand, by Definition \ref{gauss_hpgf} in section \ref{preliminaries}, we have 
\allowdisplaybreaks{
\begin{align}\label{hypergeometric}
q\,{}_{2}F_{1} \left(
\begin{array}{cc|}
\eta^{il(1-a)}, & \eta^{il(1-b)} \\
     & \varepsilon \\
\end{array} \: z \right)& =
\varepsilon(z) \eta^{il(1-b)}(-1)\sum_{t\in \F_{q}} \eta^{il(1-b)}(t)\,\overline{\eta^{il(1-b)}}(1-t)\overline{\eta^{il(1-a)}}(1-zt)\nonumber \\
& = \varepsilon(z) \eta^{il(1-b)}(-1)\sum_{t\in \F_{q}} \eta^{il(1-b)}(t)\,\eta^{ilb}(1-t)\,\eta^{ila}(1-zt).
\end{align}
}
\noindent Since $z \neq 0$, combining (\ref{suma}) and (\ref{hypergeometric}) we get the desired result.
%\begin{flushright}
%$\Box$
%\end{flushright}

\end{proof}
%\end{teo}
%%%%%%%%%%%%%%%%%%%%%%%%%%%%%%%%%%%%%%%%%%
%%%%%%%%%%%%% FIN DE LA PRUEBA %%%%%%%%%%%%%%%%
%%%%%%%%%%%%%%%%%%%%%%%%%%%%%%%%%%%%%%%%%%%

\vspace{0.1cm}

In the proof of Theorem \ref{thmformula} we applied Lemma \ref{countingwithsum} which requires for $q$ to be a prime number in a particular congruence class modulo $l$. However, Theorem \ref{thmformula} is valid over any finite field extension $\fqk$ of $\fq$ as we see in the next Corollary.

\begin{cor}\label{generalthm}
With same notation as in Theorem \ref{thmformula}, we have that 
$$\# \mathcal{C}_{z}^{(a,b)}(\F_{q^k})= q^k+1+q^k\sum_{i=1}^{l-1} \eta_{q^k}^{ilb}(-1)\,{}_{2}F_{1} \left(
\begin{array}{ll|}
\eta_{q^k} ^{il(1-a)}, & \eta_{q^k} ^{il(1-b)} \\
     & \varepsilon \\
\end{array} \: z \right)$$
where $\eta_{q^k} \in \fqkmc$ is a character of order $l$.

\begin{proof}
Again, denote the curve by $\mathcal{C}_{z}$. First notice that $\fqkmc$ is a cyclic group of order $q^k-1$. Then, if $l|(q-1)$ it also divides $q^k-1$, hence there exists $\eta_{q^k} \in \fqkmc$ of order $l$.

%\vspace{0.2cm}

Next, we show that Lemma \ref{countingwithsum} is also true over $\fqk$ for any positive integer $k$. The proof is almost identical. We only need to check that if $a \in \fqkm$ and $x^n=a$ is solvable, then $\#\{x \in \fqk : x^n=a\}=n$.
For this recall the following two statements, one of which was already used in the proof of Lemma \ref{countingwithsum} (for proofs of them see \cite{IR90} Propositions 4.2.1 and 4.2.3):
\begin{enumerate}
 \item If $(a,q)=1$, then $x^n\equiv a \pmod{q}$ is solvable $\iff$ $a^{\varphi(q)/d}\equiv 1 \pmod{q}$, where $d:=\text{gcd}(n,\varphi(q))$. Moreover, if a solution exists then there are exactly $d$ solutions.
 \item Let $q$ be an odd prime such that $q \nmid a$ and $q \nmid n$. If $x^n \equiv a \pmod{q}$ is solvable, then $x^n\equiv a \pmod{q^k}$ is also solvable for all $k \geq 1$. Moreover all these congruence have the same number of solutions.
\end{enumerate}

\noindent Then, for $q$ prime and in the case $x^n =a$ is solvable we have $$\#\{x \in \F_{q^k} : x^n=a\}=\# \{x \in \F_{q} : x^n=a\}=\text{gcd}(n,\varphi(q))=\text{gcd}(n,q-1)=n$$ since $n|(q-1)$.
Hence, Lemma \ref{countingwithsum} generalizes over $\F_{q^k}$. The proof of the Corollary now follows analogously to the proof of Theorem \ref{thmformula}.

\end{proof}

\end{cor}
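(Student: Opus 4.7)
The approach is to imitate the proof of Theorem \ref{thmformula} verbatim, with $q$ replaced throughout by $q^k$. The only place in that proof which uses a special feature of $q$ (rather than just of the ambient field) is the invocation of Lemma \ref{countingwithsum}, so the task reduces to establishing the analogous counting identity over $\F_{q^k}$.

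First I would note that the hypothesis $l\mid(q-1)$ forces $l\mid(q^k-1)$ for every $k\geq 1$, so the cyclic group $\fqkmc$ of order $q^k-1$ contains a character $\eta_{q^k}$ of order exactly $l$. This provides the character on the right-hand side of the claimed identity. Next I would state and prove the generalized counting lemma: if $n\mid(q^k-1)$ and $a\in\F_{q^k}^\times$, then
\[
\#\{x\in\F_{q^k}: x^n=a\} \;=\; \sum_{\chi^n=\varepsilon}\chi(a),
\]
where the sum runs over characters $\chi\in\fqkmc$ with $\chi^n=\varepsilon$. The proof is the word-for-word transcript of the original Lemma \ref{countingwithsum}: there are exactly $n$ characters of order dividing $n$, and the sum $T:=\sum_{\chi^n=\varepsilon}\chi(a)$ equals $n$ if $a$ is an $n$-th power and $0$ otherwise. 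The one step needing justification is that, when $x^n=a$ is solvable in $\F_{q^k}$, it has exactly $n$ solutions. This is an immediate consequence of the cyclic structure of $\F_{q^k}^\times$: the group endomorphism $x\mapsto x^n$ has kernel of size $\gcd(n,q^k-1)=n$, so every nonempty fiber has size $n$.

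With the generalized counting lemma in hand, the chain of equalities from the proof of Theorem \ref{thmformula} carries over verbatim. One writes
\[
\#\mathcal{C}_z^{(a,b)}(\F_{q^k}) = 1 + q^k + \sum_{i=1}^{l-1}\sum_{t\in\F_{q^k}}\eta_{q^k}^{il(1-b)}(t)\,\eta_{q^k}^{ilb}(1-t)\,\eta_{q^k}^{ila}(1-zt),
\]
and then matches the inner sum to Greene's integral-style definition of ${}_2F_1$ applied over $\F_{q^k}$, using that $\overline{\eta_{q^k}^{il(1-b)}}=\eta_{q^k}^{ilb}$ and $\overline{\eta_{q^k}^{il(1-a)}}=\eta_{q^k}^{ila}$, together with $\varepsilon(z)=1$ (since $z\neq 0$). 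Multiplying by $\eta_{q^k}^{ilb}(-1)$ and using $q^k$ in place of $q$ yields the stated formula.

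The main obstacle is really just conceptual rather than technical: confirming that \emph{no} step of the Theorem \ref{thmformula} argument covertly depended on $q$ being prime. Once one audits the original proof one sees that the only arithmetic input is the counting of $n$-th roots, and that input generalizes painlessly to any cyclic group of order divisible by $n$, which is exactly the setting of $\F_{q^k}^\times$.
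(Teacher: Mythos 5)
Your proposal is correct, and at the structural level it mirrors the paper's own proof: both reduce the Corollary to showing that Lemma \ref{countingwithsum} holds over $\F_{q^k}$ and then rerun the proof of Theorem \ref{thmformula} verbatim with $q$ replaced by $q^k$. Where you diverge is in how you justify the key counting fact that a solvable equation $x^n=a$ has exactly $n$ solutions in $\F_{q^k}$. You argue directly from the cyclic structure of $\F_{q^k}^{\times}$: the endomorphism $x\mapsto x^n$ has kernel of size $\gcd(n,q^k-1)=n$, so each nonempty fiber has $n$ elements. The paper instead appeals to Ireland--Rosen Propositions 4.2.1 and 4.2.3 about the congruences $x^n\equiv a \pmod{q}$ and $x^n\equiv a\pmod{q^k}$, i.e.\ about the ring $\Z/q^k\Z$ rather than the field $\F_{q^k}$; since $\F_{q^k}\not\cong \Z/q^k\Z$ for $k>1$, that citation requires an implicit translation that your argument simply bypasses. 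So your route is not only valid but arguably cleaner and more self-contained: it isolates exactly the hypothesis that matters ($n$ divides the order of the cyclic group $\F_{q^k}^{\times}$), whereas the paper's detour through congruences modulo prime powers imports machinery that does not literally apply to the finite field in question.
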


\vspace{0.2cm}

As a consequence of Corollary \ref{generalthm} we get the following result that relates the number of points of certain curves over finite extensions of $\fq$.

\begin{cor}\label{curves_have_same_number_points}
Let $l$ be a prime, $m,m',s,s'$ be integers satisfying $1\leq m,m',s,s'< l$ and $m+s=m'+s'=l$, and consider the curves with affine equations given by $\mathcal{C}_{z}^{(m,s)}: y^l=t^m(1-t)^s(1-zt)^m$ and 
$\mathcal{C}_{z}^{(m',s')}:y^l=t^{m'} (1-t)^{s'} (1-zt)^{m'}$ with $z \neq 0,1$. Then, for a prime $q$ such that $q \equiv 1 \pmod{l}$  we have 
$$\#\mathcal{C}_{z}^{(m,s)}(\mathbb{F}_{q^k})=\#\mathcal{C}_{z}^{(m',s')}(\mathbb{F}_{q^k})$$
for all $k \in \mathbb{N}$.
 
\begin{proof}
Again, we drop the dependency of the curves on the integers $m,m',s,s'$ and denote $\mathcal{C}_{z}^{(m,s)}=\mathcal{C}_{z}$ and $\mathcal{C}_{z}^{(m',s')}=\mathcal{C}'_{z}$.
Let $\eta_{q^k} \in \fqkmc$ be a character of order $l$. If $l=2$ then $\mathcal{C}_{z}=\mathcal{C}'_{z}$ since $(m,s)$ and $(m',s')$ are both $(1,1)$. Therefore, there is nothing to prove in this case. 

\vspace{0.3cm}
Suppose now that $l$ is an odd prime. Then, the order of $\eta_{q^k}$ is odd and so $\eta_{q^k}(-1)=1$. Next, consider $a:=m/l,\,\, b:=s/l$ and $a':=m'/l,\,\, b':=s'/l$ in Theorem \ref{thmformula}. The curves defined by these values are exactly $\mathcal{C}_{z}$ and $\mathcal{C'}_{z}$, hence by Corollary \ref{generalthm} and taking into account that $m+s=l$ and $m'+s'=l$, we have

\begin{equation}\label{pointsCz} \#\mathcal{C}_{z}(\F_{q^k})-(q^k+1)= q^k \sum_{i=1}^{l-1}  {}_{2}F_{1} \left(
\begin{array}{ll|}
\eta_{q^k}^{i(l-m)}, & \eta_{q^k}^{im} \\
     & \varepsilon \\
\end{array} \: z \right) \end{equation}

\begin{equation} \label{pointsC'z} \#\mathcal{C}'_{z}(\F_{q^k})-(q^k+1)= q^k \sum_{i=1}^{l-1} {}_{2}F_{1} \left(
\begin{array}{ll|}
\eta_{q^k}^{i(l-m')}, & \eta_{q^k}^{im'} \\
     & \varepsilon \\
\end{array} \: z \right)\end{equation}
%where $\eta_{q^k} \in \widehat{F_{q^k}^*}$ is a character of order $l$.
As we can see, the exponents of the characters appearing in the hypergeometric functions in (\ref{pointsCz}) and (\ref{pointsC'z}) add up to $0 \pmod{l}$.
Also notice that 
\begin{itemize}
\item $\# \{(r,t): 1\leq r,t \leq l-1, r+t=l\}=l-1.$
\item $i(l-m)\equiv j(l-m) \pmod{l} \iff im\equiv jm \pmod{l} \iff l| m(i-j).$
Since $l$ is prime and $0<m<l$, $l$ must divide $i-j$. But $1\leq i,j \leq l-1$, then $i(l-m)\equiv j(l-m) \pmod{l} \iff i=j$  
\end{itemize}

\noindent By these two observations, we see that the terms appearing in the RHS of (\ref{pointsCz}) are the same ones appearing in the RHS of (\ref{pointsC'z}), therefore we conclude that $$\#\mathcal{C}_{z}(\F_{q^k})=\#\mathcal{C}'_{z}(\F_{q^k})$$ 
\end{proof}

\end{cor}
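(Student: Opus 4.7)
The plan is to feed both curves into Corollary \ref{generalthm} and show that the two resulting hypergeometric sums are just re-indexings of one another. First I would dispose of the trivial case $l=2$: the constraints $1\le m,s<2$ and $m+s=2$ force $(m,s)=(1,1)$, and likewise $(m',s')=(1,1)$, so the two curves have identical defining equations and the claim is immediate.

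Assume from now on that $l$ is an odd prime. Setting $(a,b)=(m/l,s/l)$ for the first curve and $(a',b')=(m'/l,s'/l)$ for the second, Corollary \ref{generalthm} expresses each of $\#\mathcal{C}_z^{(m,s)}(\F_{q^k})-(q^k+1)$ and $\#\mathcal{C}_z^{(m',s')}(\F_{q^k})-(q^k+1)$ as a $q^k$ times a sum over $i=1,\dots,l-1$ of Greene hypergeometric values, with characters of the form $\eta_{q^k}^{i(l-m)}$, $\eta_{q^k}^{im}$ (respectively with $m'$). Since $l$ is odd, $\eta_{q^k}$ has odd order, so $\eta_{q^k}^{ilb}(-1)=1$ and the character prefactors disappear, leaving
\[
\#\mathcal{C}_z^{(m,s)}(\F_{q^k})-(q^k+1)=q^k\sum_{i=1}^{l-1}{}_2F_1\!\left(\!\begin{array}{cc}\eta_{q^k}^{i(l-m)},&\eta_{q^k}^{im}\\ &\varepsilon\end{array}\Bigg|\,z\right),
\]
and the analogous formula with $m'$ in place of $m$.

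The key step is a purely combinatorial one. Each summand has the form ${}_2F_1(\eta^{-j},\eta^{j};\varepsilon\,|\,z)$ where $j=im\bmod l$, because the two upper exponents add to $il\equiv 0\pmod l$. Since $l$ is prime and $1\le m<l$, the element $m$ is a unit in $\Z/l\Z$, so the map $i\mapsto im\bmod l$ is a bijection of $\{1,2,\dots,l-1\}$. Consequently the sum on the right-hand side equals
\[
\sum_{j=1}^{l-1}{}_2F_1\!\left(\!\begin{array}{cc}\eta_{q^k}^{-j},&\eta_{q^k}^{j}\\ &\varepsilon\end{array}\Bigg|\,z\right),
\]
an expression that no longer mentions $m$. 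The same re-indexing applied with $m'$ (which is also a unit mod $l$) produces the identical sum, and the equality of point counts follows.

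The only potentially delicate point is verifying the bijection underlying the re-indexing, and that hinges cleanly on the primality of $l$; everything else is formal bookkeeping on top of Corollary \ref{generalthm}. (As a sanity check one may note that Corollary \ref{conjugate_hpgf} guarantees the value ${}_2F_1(\eta^{-j},\eta^j;\varepsilon\,|\,z)$ is unchanged under swapping $j\leftrightarrow -j$, which is consistent with the pair $(\eta^{-j},\eta^j)$ appearing symmetrically in both sums.)
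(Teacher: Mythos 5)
Your proposal is correct and follows essentially the same route as the paper: dispose of $l=2$ trivially, apply Corollary \ref{generalthm} with the prefactor $\eta_{q^k}^{ilb}(-1)=1$ for odd $l$, and observe that since $m$ (resp. $m'$) is a unit modulo the prime $l$, the map $i\mapsto im \bmod l$ permutes $\{1,\dots,l-1\}$, so both hypergeometric sums are re-indexings of the same $m$-independent sum. The paper packages this last step as two small observations (counting pairs of exponents summing to $l$ and checking injectivity of $i\mapsto i(l-m) \bmod l$), but the underlying bijection argument is identical to yours.
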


It is not hard to see that the previous result can be generalized to the case when $l$ is an odd integer and $(l,m)=(l,m')=1$, and the argument is the same done above. However, the result is not true in general if we just ask for $m+s=m'+s'$, as we can see in the following example for $l=5$ and $m+s=4$:
\begin{itemize}
 \item If $(m,s)=(1,3)$ then $Z(\mathcal{C}_{2}|\F_{11},T)=\frac{(11T^2+3T+1)^4}{(1-T)(1-11T)}$ hence $$|\#\mathcal{C}_{2}(\F_{11})-(11+1)|=12$$ 
\item If $(m',s')=(2,2)$ then $Z(\mathcal{C'}_{2}|\F_{11},T)=\frac{(11T^2-2T+1)^4}{(1-T)(1-11T)}$ hence $$|\#\mathcal{C}'_{2}(\F_{11})-(11+1)|=8$$
\end{itemize}
\vspace{0.3cm}

\section{The genus of $\mathcal{C}_{z}$}\label{genus of the curve}
\vspace{0.4cm}

\par
%Our next interest is to find a closed formula for hypergeometric functions over finite fields. More specifically, we are interested in relating each particular term that appears in the right hand side of sum (\ref {formula}) to the curve $\mathcal{C}_{z}$. 
In this section, we start by recalling the Riemann-Hurwitz genus formula, which is extremely useful when trying to compute the genus of an algebraic curve.
\vspace{0.3cm}

%%%%%%%%%%%%%%%%%%%%%%%%%%%
%%Riemann-Hurwitz formula%%------------de planetMath
%%%%%%%%%%%%%%%%%%%%%%%%%%%
\begin{teo}[Riemann-Hurwitz genus formula]\label{RHgenus}
 Let $\mathcal{C}_{1}$ and $\mathcal{C}_{2}$ be two smooth curves defined over a perfect field $K$ of genus $g_{1}$ and $g_{2}$ respectively. Let $\psi:\mathcal{C}_{1}\to \mathcal{C}_{2}$ be a non-constant and separable map. Then
$$2g_{1}-2\geq deg(\psi)(2g_{2}-2)+\sum_{P\in \mathcal{C}_{1}} (e_{\psi}(P)-1)$$
where $e_{\psi}(P)$ is the ramification index of $\psi$ at $P$. Moreover, there is equality if and only if either char$(K)=0$ or char$(K)=p$ and $p$ does not divide $e_{\psi}(P)$ for all $P\in\mathcal{C}_{1}$.
\end{teo}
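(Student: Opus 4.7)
The natural approach is to compare canonical divisors on $\mathcal{C}_1$ and $\mathcal{C}_2$ via the pullback map $\psi^*$. Concretely, the plan is to pick a non-zero meromorphic differential $\omega$ on $\mathcal{C}_2$ and analyze the divisor of $\psi^*\omega$ on $\mathcal{C}_1$. The key identity one aims to establish is
$$\mathrm{div}(\psi^*\omega) = \psi^*(\mathrm{div}(\omega)) + R,$$
where $R = \sum_{P \in \mathcal{C}_1} r_P \cdot P$ is the \emph{ramification divisor} of $\psi$, with each coefficient $r_P$ computed from the local behaviour of $\psi$ at $P$.

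First I would set up local coordinates: pick uniformizers $t$ at $P \in \mathcal{C}_1$ and $u$ at $\psi(P) \in \mathcal{C}_2$. The map $\psi$ looks locally like $u = c \, t^{e_\psi(P)} + \text{(higher order)}$ with $c \neq 0$, so $\psi^* du = (c\, e_\psi(P) t^{e_\psi(P)-1} + \cdots)\, dt$. This gives $\mathrm{ord}_P(\psi^* du) \geq e_\psi(P)-1$, with equality precisely when the characteristic does not divide $e_\psi(P)$ (otherwise the leading term vanishes and the order is strictly larger). This pointwise analysis is what produces both the inequality in the statement and the sharp condition for equality; it is also the step I expect to be the main technical obstacle, since one must verify that wild ramification genuinely contributes strictly more than $e_\psi(P)-1$ rather than less.

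Next I would take degrees on both sides of the divisor identity. Using the standard fact that for any non-zero differential $\omega$ on a smooth projective curve of genus $g$ one has $\deg(\mathrm{div}(\omega)) = 2g-2$, and that $\deg(\psi^* D) = \deg(\psi) \cdot \deg(D)$ for a divisor $D$ on $\mathcal{C}_2$, the degree comparison yields
$$2g_1 - 2 = \deg(\psi)\,(2g_2 - 2) + \deg(R).$$
Combining this with the local bound $r_P \geq e_\psi(P) - 1$ gives
$$2g_1 - 2 \geq \deg(\psi)\,(2g_2 - 2) + \sum_{P \in \mathcal{C}_1} (e_\psi(P) - 1),$$
which is exactly the Riemann--Hurwitz inequality.

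Finally, for the equality clause, I would observe that the computation above shows $r_P = e_\psi(P) - 1$ precisely when $\mathrm{char}(K) \nmid e_\psi(P)$, so summing yields equality iff this holds at every ramification point, which is automatic in characteristic $0$ and is the stated tameness condition in positive characteristic. The only auxiliary ingredients needed are the existence of a non-zero meromorphic differential on $\mathcal{C}_2$ (so that $\psi^*\omega \neq 0$, using separability of $\psi$) and the degree formula for the canonical divisor, both of which I would invoke as standard.
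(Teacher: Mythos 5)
The paper does not actually prove this statement: it is quoted as the classical Riemann--Hurwitz formula (the bibliography's standard references, e.g.\ Silverman, \emph{The Arithmetic of Elliptic Curves}, Thm.~II.5.9) and is only used as a black box to compute the genus of $\mathcal{C}_z$. Your proposal is exactly the standard proof of that classical result --- pull back a nonzero differential, establish $\mathrm{div}(\psi^*\omega)=\psi^*(\mathrm{div}(\omega))+R$ with the local estimate $\mathrm{ord}_P(\psi^*\,du)\geq e_\psi(P)-1$ (equality precisely in the tame case), then take degrees using $\deg(\mathrm{div}(\omega))=2g-2$ and $\deg(\psi^*D)=\deg(\psi)\deg(D)$ --- and it is correct, including the identification of the wild-ramification inequality as the key technical point.
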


Next, we apply the Riemann-Hurwitz formula to compute the genus of the smooth projective curve $\mathcal{C}_{z}$ with affine equation \begin{equation}\label{curve_ms}
\mathcal{C}_{z}: y^l=t^m(1-t)^s(1-zt)^m
\end{equation}
where $l$ is prime and $1\leq m,s<l$ such that $m+s=l$. For that, we consider the map $$\psi: \mathcal{C}_{z}\to \mathbb{P}^{1}, \,\,\,\,\,\,\,\, [x:y:z] \mapsto [x:z]$$
and notice that $[0:1:0]\mapsto [1:0]$. Generically, every point in $\mathbb{P}^{1}$ has $l$ preimages, so the degree of this map is $l$. Now, the genus of $\mathbb{P}^{1}$ is $0$ and $\psi$ is ramified at 4 points, namely $P_{1}=[0:0:1], P_{2}=[1:0:1], P_{3}=[z^{-1}:0:1]$ and $P_{4}=[0:1:0]$ the point at infinity, with ramification indices $e_{\psi}(P_{i})=l$ for all $i=1, \dots,4$.  Denoting $g:=\text{genus}(\mathcal{C}_{z})$, we obtain that $2g-2= -2l+4(l-1)=2l-4$, hence $g=l-1$. 

\vspace{0.3cm}
\begin{remark}
 The fact that the curve $\mathcal{C}_{z}$ has genus $l-1$ can also be seen by noticing that $\mathcal{C}_{z}$ is a hyperelliptic curve and has model $Y^2=F(X)$ with deg$(F(X))=2l$ (see section \ref{generalconj} Theorem \ref{birational_curve_general_case}). Hence, $2l=2\,\text{genus}(\mathcal{C}_{z})+2$, therefore, genus$(\mathcal{C}_{z})=l-1$.  
\end{remark}

\vspace{0.3cm}

Now, applying Theorem \ref {thmformula} to the curve (\ref{curve_ms}), we see that the upper limit in the sum is the genus of the curve. Also, as we mentioned in the previous section, since $l$ is prime and $\eta_{q}\in \fqmc$ is a character of order $l$, we have that $\eta_{q}(-1)=1$. Then,   
\begin{align}\label{puntoscurva}
\# \mathcal{C}_{z}(\F_{q}) &= q+1 + q\sum_{i=1}^{g}{}_{2}F_{1} \left(
\begin{array}{ll|} 
\eta_{q} ^{is}, & \eta_{q} ^{im} \\
     & \varepsilon \\
\end{array} \: z \right)\nonumber\\
& = q+1+F_{1,q}(z)+F_{2,q}(z)+ \cdots +F_{g,q}(z)
\end{align}
where $F_{i,q}(z)= q \,{}_{2}F_{1} \left(
\begin{array}{ll|}
\eta_{q} ^{is}, & \eta_{q} ^{im} \\
     & \varepsilon \\
\end{array} \: z \right)$.
% and $\eta_{q} \in \mathbb{F}_{q}^{\times}$ is a character of order $l$.

\vspace{0.3cm}

Notice the resemblance between formulas (\ref{formulapuntos}) and (\ref{puntoscurva}). With this similarity in mind, we are now interested in finding relations between the terms in these formulas, i.e., relations between the $F_{i,q}(z)$ in formula (\ref{puntoscurva}) and the $\alpha_{i,q}(z)+\overline{\alpha_{i,q}}(z)$ in formula (\ref{formulapuntos}). 
%Denote $a_i(z):=\alpha_{i}(z)+\overline{\alpha_{i}}(z)$, for $1\leq i \leq g$. We state now our main conjecture, which we proved in some particular cases.
\vspace{0.3cm}

\section{The Main Conjecture}\label{mainconj} 
\vspace{0.4cm}

%As we mentioned in the previous section, we want to relate the terms $F_{i,q}(z)$ in formula (\ref{puntoscurva}) to the $\alpha_{i,q}(z)+\overline{\alpha_{i,q}}(z)$ in formula (\ref{formulapuntos}). 
In this section we state our main conjecture, which proposes an equality between values of ${}_{2}F_{1}$- hypergeometric functions and reciprocal roots of the zeta function of
$\mathcal{C}_{z}$, and in the next sections we prove the conjecture in some particular cases. Denote $a_{i,q}(z):=\alpha_{i,q}(z)+\overline{\alpha_{i,q}}(z)$. 

\begin{conj}\label{conject}
%(Vega \cite{Ve08}). 
Let $l$ and $q$ be odd primes such that 
$q\equiv 1 \pmod{l}$ and let 
$z \in \F_{q}$, $z\neq 0,1$. 
Consider the smooth projective curve with affine equation given by 
$$\mathcal{C}_{z}^{(m,s)}: y^l=t^m(1-t)^s(1-zt)^m$$ 
where $1\leq m,s< l$ are integers such that $m+s=l$. Then, using the notation from previous section and after rearranging terms if necessary
$$F_{i,q}(z)=-a_{i,q}(z) \,\,\,\, \text{for all}\,\,\, 1\leq i\leq g.$$
\end{conj}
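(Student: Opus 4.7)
\par The plan is to realize each hypergeometric term $F_{i,q}(z)$ as the negative of the Frobenius trace on a single two-dimensional piece of the first $\ell$-adic cohomology of $\mathcal{C}_{z}$. The curve carries a natural $\mu_{l}$-action via $(t,y)\mapsto(t,\zeta y)$, and since $q\equiv 1\pmod{l}$ the characters of $\mu_{l}$ take values in $\overline{\Q}_{\ell}$ and commute with the action of Frobenius on cohomology. Decomposing
\[
H^{1}_{\text{\'et}}\bigl(\mathcal{C}_{z}\otimes\overline{\F_{q}},\overline{\Q}_{\ell}\bigr) = \bigoplus_{i=1}^{l-1} V_{i}
\]
into $\mu_{l}$-isotypic components indexed by the characters $\eta_{q}^{i}$ (the trivial component vanishes since $\mathcal{C}_{z}/\mu_{l}\simeq \mathbb{P}^{1}$), and using that $\mathcal{C}_{z}$ has genus $l-1$ to conclude that each $V_{i}$ is two-dimensional, the goal is to pair each $F_{i,q}(z)$ with a single $V_{i}$.

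\par Concretely, applying the Grothendieck--Lefschetz trace formula $\mu_{l}$-equivariantly and extracting the $\eta_{q}^{i}$-isotypic summand of the fixed-point count yields
\[
\sum_{t\in \F_{q}} \eta_{q}^{i}\bigl(t^{m}(1-t)^{s}(1-zt)^{m}\bigr) \;=\; -\,\mathrm{tr}\bigl(\mathrm{Frob}_{q}\mid V_{i}\bigr).
\]
By formula (\ref{hypergeometric}) in the proof of Theorem \ref{thmformula}, the left-hand side equals $F_{i,q}(z)$. The Weil conjectures then guarantee that the two Frobenius eigenvalues on $V_{i}$ form a complex-conjugate pair $\alpha_{i},\overline{\alpha_{i}}$ of absolute value $\sqrt{q}$, so their sum is precisely $a_{i,q}(z)$; after relabeling the $V_{i}$ so that $V_{i}$ corresponds to the $i$-th pair in formula (\ref{formulapuntos}), one concludes $F_{i,q}(z) = -a_{i,q}(z)$.

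\par The main obstacle is the equivariant matching step: one must verify rigorously that the character sum $\sum_{t}\eta_{q}^{i}(\cdots)$ is the trace on a single two-dimensional piece $V_{i}$, and not a mixture of several of them. For specific small primes $l$ this difficulty can be bypassed by converting $\mathcal{C}_{z}$ to a hyperelliptic model $Y^{2}=F(X)$ and factoring the numerator of its zeta function explicitly into degree-two factors whose coefficients one then matches to the $F_{i,q}(z)$; this is the concrete strategy to follow in sections \ref{proof l=3} and \ref{proof l=5}. In full generality the natural framework is Katz's theory of hypergeometric sheaves \cite{Ka90}, but even there one must choose the labeling compatibly with the involution $V_{i}\leftrightarrow V_{l-i}$ coming from complex conjugation on cohomology, which is precisely why the conjecture is only asserted up to rearrangement of the $a_{i,q}(z)$.
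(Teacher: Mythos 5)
First, be aware that the statement you are proving is stated in the paper as a \emph{conjecture}: the paper itself only establishes it for $l=3$ (section \ref{proof l=3}) and $l=5$ (section \ref{proof l=5}), and section \ref{generalconj} merely reduces the general case to unproven identities such as (\ref{relation_powers_hpgf}). Your cohomological sketch is a reasonable framework, and the first half of it is essentially standard: for the superelliptic curve $y^l=t^m(1-t)^s(1-zt)^m$ with four totally ramified branch points, the $\mu_l$-isotypic pieces $V_i$ of $H^1_{\mathrm{\acute{e}t}}$ are each two-dimensional, and an equivariant Lefschetz argument does identify the character sum $\sum_t\eta_q^i(\cdots)$ (hence $F_{i,q}(z)$, via (\ref{hypergeometric})) with $-\mathrm{tr}(\mathrm{Frob}_q\mid V_i)$. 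You flag this matching as "the main obstacle," but it is not where the real difficulty lies.

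The genuine gap is the next sentence: "The Weil conjectures then guarantee that the two Frobenius eigenvalues on $V_{i}$ form a complex-conjugate pair." They do not. Weil gives $|\alpha|=\sqrt{q}$ for every eigenvalue and that the full multiset of $2(l-1)$ eigenvalues is closed under conjugation, but Poincar\'e duality is $\mu_l$-equivariant and pairs $V_i$ with $V_{l-i}$, so a priori the conjugate of an eigenvalue of $V_i$ lives in $V_{l-i}$, not in $V_i$. Consequently $\mathrm{tr}(\mathrm{Frob}\mid V_i)$ need not equal any of the real numbers $a_{i,q}(z)$ coming from the factorization $\prod_i(1-a_{i,q}T+qT^2)$ of the $L$-polynomial; even after using Corollary \ref{conjugate_hpgf} to see that $F_{i,q}=F_{l-i,q}$ is real, the eigenvalue pair on $V_i$ could still be of the form $\{\alpha,-\alpha\}$ rather than $\{\alpha,\overline{\alpha}\}$. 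Arranging the eigenvalues into conjugate pairs compatibly with the isotypic decomposition is precisely the content of the paper's case proofs: for $l=3$ one shows the $L$-polynomial is a perfect square via the explicit isogeny of Proposition \ref{elliptic_curves_isogenous} and the twist computation of Corollary \ref{same_points_ell_curves}; for $l=5$ one needs the simplicity/CM argument for the Jacobian factors \emph{together with} the character-sum identity $F_{i,q^2}=-F_{i,q}^2+2q$ proved inside Theorem \ref{conjecture l=5}; and in general Propositions \ref{relation_hypergeometric_functions} and \ref{relation_half_terms} show that identities like (\ref{relation_powers_hpgf}) over the extensions $\F_{q^n}$ are what is still missing. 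Your proposal defers exactly this step ("relabeling the $V_i$" / "choose the labeling compatibly with the involution"), so as written it assumes the conclusion rather than proving it; it is a plausible program, not a proof, and in particular it does not supply the extension-field identities or perfect-square statements that the paper identifies as the remaining obstruction.
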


The previous conjecture gives a closed formula for the values of some hypergeometric functions over finite fields in terms of the traces of Frobenius of certain curves. 
%In the next two sections we prove the conjecture for particular values of the prime $l$. 
\vspace{0.3cm}

\section{Proof of Conjecture for $l=3$}\label{proof l=3}
\vspace{0.3cm}

\par Throughout this section fix $l=3$ and let $q$ be a prime such that $q \equiv 1 \pmod{3}$. Let $z \in \F_{q}$, $z\neq 0,1$, and consider the smooth projective curve with affine equation given by
\begin{equation}\label{curve_l=3}
 \mathcal{C}_{z}^{(1,2)}: y^3=t(1-t)^{2}(1-zt)
\end{equation}

\noindent Denote $\mathcal{C}_{z}^{(1,2)}=\mathcal{C}_{z}$. The idea will be to show that the $L$-polynomial of the curve $\mathcal{C}_{z}$ is a perfect square, and from that and formulas (\ref{formulapuntos}) and (\ref{puntoscurva}) conclude that the values of the traces of Frobenius must agree with the values of the hypergeometric functions, up to a sign.

\vspace{0.2cm}
\par Recall that, by the Riemann-Hurwitz formula, $\mathcal{C}_{z}$ has genus 2.
Now, every curve of genus 2 defined over $\F_{q}$ is birationally equivalent over $\F_{q}$ to a curve of the form
\begin{equation}\label{canonical_form_genus_2}
\mathcal{C}: Y^2=F(X) 
\end{equation}
where $$F(X)=f_{0}+f_{1}X+f_{2}X^2+ \dots + f_{6}X^6 \in \F_{q}[X]$$ is of degree 6 and has no multiple factors (see \cite{Cass96}). This identification is unique up to a fractional linear transformation of $X$, and associated transformation of $Y$,
\begin{equation}\label{frac_linear_transf}
X\to \frac{aX+b}{cX+d}, \,\,\,\,\, Y\to \frac{eY}{(cX+d)^3}
 \end{equation}
where 
$$a,b,c,d \in \F_{q}, \,\,\,\, ad-bc\neq 0, \,\,\,\, e\in \F_{q}^{\times}.$$
In our particular case we have

\begin{lemma}\label{curve_biratl}
The curve $\mathcal{C}_{z}: y^3=t(1-t)^{2}(1-zt)$ is birationally equivalent to
\begin{equation}\label{curve_bir_equiv}
\mathcal{C}: Y^2=X^6+2(1-2z)X^3+1.
\end{equation}

\begin{proof}
We begin by translating $t\to 1-t$, so the double point is now at the origin. We get:
\begin{align*}\mathcal{C}_{(1)}: y^3& = (1-t)t^2(1-z(1-t))\\
& =(1-z)t^2+(2z-1)t^3-zt^4.
\end{align*}

\noindent Since $z\neq 0$, multiply both sides by $z^{-1}$ and define 
$$G_{2}(t,y):= (1-z^{-1})t^2$$
$$G_{3}(t,y):=z^{-1}y^3-(2-z^{-1})t^3$$
$$G_{4}(t,y):=t^4.$$ Then, each $G_{i}$ is a homogeneous polynomial of degree $i$ in $\F_{q}[t,y]$ and $\mathcal{C}_{z}$ is birationally equivalent to $$\mathcal{C}_{(1)}: G_{2}(t,y)+G_{3}(t,y)+G_{4}(t,y)=0.$$

\noindent Next, put $y=tX$ and complete the square to get:
\allowdisplaybreaks{
\begin{align*}
 \mathcal{C}_{(2)} & : 0=t^4+(z^{-1}X^3+z^{-1}-2)t^3+(1-z^{-1})t^2 \\
&\qquad = \left(t^2+\frac{1}{2}(z^{-1}X^3+z^{-1}-2)t\right)^2-\frac{(z^{-1}X^3+z^{-1}-2)^2}{4}t^2+(1-z^{-1})t^2
\end{align*}
}

\noindent Multiply by 4 (char($\F_{q}\neq 2)$) and divide by $t^2$ to get that $\mathcal{C}_{z}$ is birationally equivalent to
$$\mathcal{C}: Y^2=F(X)$$
where 
$$Y=2G_{4}(1,X)t+G_{3}(1,X)$$
$$F(X)=G_{3}(1,X)^2-4G_{2}(1,X)G_{4}(1,X)$$

\noindent By substituting $G_{2}, G_{3}$ and $G_{4}$ in $F(X)$, and rescaling $Y\to z^{-1}Y$ we get the desired result, i.e.,  $\mathcal{C}_{z}$ is birationally equivalent to 
$$\mathcal{C}: Y^2=X^6+2(1-2z)X^3+1.$$
\end{proof}
\end{lemma}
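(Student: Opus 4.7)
The plan is to carry out an explicit birational transformation in two stages: first translate the double point of the defining equation to the origin and recast the cubic as a sum of forms of fixed total degree in $(t,y)$, then blow up the resulting singularity and complete the square. Concretely, I would begin by substituting $t \mapsto 1-t$ to move the double factor $(1-t)^2$ to $t^2$; expansion gives $y^3 = (1-z)t^2 + (2z-1)t^3 - zt^4$, and multiplying by $z^{-1}$ (which is legal since $z\neq 0$) lets me write the equation as $G_2(t,y) + G_3(t,y) + G_4(t,y)=0$, with $G_i$ homogeneous of degree $i$ in $(t,y)$; the natural choice is $G_2 = (1-z^{-1})t^2$, $G_3 = z^{-1}y^3 - (2-z^{-1})t^3$, $G_4 = t^4$, so the origin is an isolated double point with tangent cone $G_2$.

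Next, I would resolve the singularity by the standard monoidal substitution $y = tX$. Homogeneity pulls out an overall factor of $t^2$, which I cancel, obtaining
$$G_4(1,X)\,t^2 + G_3(1,X)\,t + G_2(1,X) = 0,$$
a quadratic equation in $t$ with coefficients in $\F_q[X]$. Because $\mathrm{char}\,\F_q \neq 2$ and $G_4(1,X)=1$, I can complete the square in $t$ by introducing $Y := 2G_4(1,X)\,t + G_3(1,X)$, which yields
$$Y^2 = G_3(1,X)^2 - 4\,G_2(1,X)\,G_4(1,X),$$
already a hyperelliptic model of the right shape.

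The last step is purely computational: substitute $G_2(1,X) = 1-z^{-1}$, $G_3(1,X) = z^{-1}X^3 + z^{-1} - 2$, and $G_4(1,X) = 1$, expand, and observe that the $-4(1-z^{-1})$ coming from the $G_2G_4$ term cancels the $-4z^{-1}+4$ produced by $(z^{-1}-2)^2$. What survives is precisely $z^{-2}\bigl(X^6 + 2(1-2z)X^3 + 1\bigr)$. Rescaling $Y \mapsto z^{-1}Y$ removes the $z^{-2}$ prefactor and produces the claimed model $Y^2 = X^6 + 2(1-2z)X^3 + 1$. Since each substitution $(t,y)\mapsto (1-t,y)$, $(t,y)\mapsto(t,tX)$, $t\mapsto Y$, and $Y\mapsto z^{-1}Y$ is birational over $\F_q$, their composition furnishes the required birational equivalence $\mathcal{C}_z \to \mathcal{C}$. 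The main obstacle is not conceptual but notational: one needs to keep careful track of the rescaling factors so that the constant term in $F(X)$ genuinely vanishes modulo the overall $z^{-2}$ prefactor, which is what ultimately makes the leading and trailing coefficients of $F(X)$ both equal to $1$.
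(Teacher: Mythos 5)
Your proposal is correct and follows essentially the same route as the paper's own proof: the translation $t\mapsto 1-t$, the decomposition into homogeneous forms $G_2+G_3+G_4=0$, the substitution $y=tX$ with cancellation of $t^2$, completing the square via $Y=2G_4(1,X)t+G_3(1,X)$, and the final rescaling $Y\mapsto z^{-1}Y$. The cancellation you describe (the $-4(1-z^{-1})$ against the cross terms of $(z^{-1}-2)^2$, leaving the overall $z^{-2}$ factor) is exactly what makes the paper's computation work.
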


In order to show that the L-polynomial of the curve $\mathcal{C}_{z}$ over $\fq$ is a perfect square, we will start by showing that the Jacobian of $\mathcal{C}_{z}$, Jac$(\mathcal{C}_{z})$, is isogenous to the product of two elliptic curves, i.e., that the Jac($\mathcal{C}_{z}$) is \emph{reducible}. To do that, it is convenient to find a slightly different model for our curve as we can see in the next criterion. First, we need to introduce the concept of equivalent curves. 

\begin{df}
 We say that two curves $Y^2=F(X)$ are equivalent if they are taken into one another by a fractional linear transformation of $X$ and the related transformation of $Y$ given by (\ref{frac_linear_transf}).
\end{df}

\noindent 
\begin{teo}[\cite{Cass96} Theorem 14.1.1]\label{criterion_genus_2}
The following properties of a curve $\mathcal{C}$ of genus 2 are equivalent:
\begin{enumerate}
 \item It is equivalent to a curve
\begin{equation}\label{criterion_condition1}
 Y^2=c_{3}X^6+c_2X^4+c_1X^2+c_0
\end{equation}
with no terms of odd degree in $X$.

\item It is equivalent to a curve
\begin{equation}
 Y^2=G_{1}(X)G_{2}(X)G_{3}(X)
\end{equation}
where the quadratics $G_{j}(X)$ are linearly dependent.

\item It is equivalent to
\begin{equation}
 Y^2=X(X-1)(X-a)(X-b)(X-ab)
\end{equation}
for some $a,b$.
\end{enumerate}

If one (and so all) of the previous conditions is satisfied, the Jacobian of $\mathcal{C}$ is reducible. 
\end{teo}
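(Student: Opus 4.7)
The plan is to establish the cycle $(1) \Rightarrow (2) \Rightarrow (1)$ together with the equivalence $(1) \Leftrightarrow (3)$, and then derive the Jacobian reducibility from the symmetric form in $(1)$. For $(1) \Rightarrow (2)$: view the right-hand side of $(1)$ as a cubic in $U := X^2$, factor it over $\overline{\F_q}$ as $c_3(U-\alpha_1)(U-\alpha_2)(U-\alpha_3)$, and take $G_j(X) := X^2 - \alpha_j$. The three quadratics $G_j$ all lie in the two-dimensional subspace $\operatorname{span}(1, X^2)$, hence are linearly dependent.

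For $(2) \Rightarrow (1)$: the linearly dependent quadratics span a two-dimensional subspace $W$ of the three-dimensional space of binary quadratics on $\mathbb{P}^1$. The pencil $\mathbb{P}(W)$ defines a degree-$2$ morphism $\mathbb{P}^1 \to \mathbb{P}^1$ whose covering involution $\sigma$ fixes exactly two points on the source; after a fractional linear transformation placing these two fixed points at $0$ and $\infty$, $\sigma$ becomes $X \mapsto -X$ and $W$ becomes the $\sigma$-invariant subspace $\operatorname{span}(1, X^2)$. Each $G_j$ is now of the form $\mu_j + \nu_j X^2$, so the product $G_1 G_2 G_3$ contains only even powers of $X$, giving $(1)$.

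For $(1) \Leftrightarrow (3)$: form $(1)$ is preserved by the involution $X \mapsto -X$ with finite roots arranged as $\{\pm r_1, \pm r_2, \pm r_3\}$, while form $(3)$ is preserved by the involution $X \mapsto ab/X$ with six roots (including $\infty$) paired as $\{0,\infty\}, \{1, ab\}, \{a, b\}$. A fractional linear transformation conjugating these two involutions carries one form into the other; concretely, for $(1) \Rightarrow (3)$, send $\{r_1, -r_1\} \to \{0, \infty\}$, absorb the resulting pole into $Y$, and rescale $X$ so that one of the remaining four roots equals $1$. The product relation forcing the last root to be $ab$ then falls out automatically from the fact that the remaining four roots naturally come in pairs whose product is a common constant, fixed to $1$ by the rescaling.

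Granted $(1)$, the curve admits the extra involution $\iota : (X, Y) \mapsto (-X, Y)$ with quotient the elliptic curve $E_1 : W^2 = c_3 U^3 + c_2 U^2 + c_1 U + c_0$ via $(U, W) = (X^2, Y)$; the composite involution $(X, Y) \mapsto (-X, -Y)$ has quotient the elliptic curve $E_2 : W^2 = c_0 U^3 + c_1 U^2 + c_2 U + c_3$ via $(U, W) = (X^{-2}, Y X^{-3})$. Together these give a non-constant morphism $\mathcal{C} \to E_1 \times E_2$ whose pullback on Jacobians yields an isogeny $E_1 \times E_2 \sim \operatorname{Jac}(\mathcal{C})$, so the Jacobian is reducible. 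The main obstacle is $(2) \Rightarrow (1)$: one must extract a canonical involution from an arbitrary pencil of quadratics and perform the coordinate change over the base field, and if the two fixed points of $\sigma$ are Galois-conjugate rather than $\F_q$-rational the transformation is only defined over a quadratic extension---though this is harmless for the geometric statement about Jacobian reducibility.
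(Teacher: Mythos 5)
The paper itself does not prove this statement: it is imported verbatim from Cassels \cite{Cass96}, and the only ingredient the paper actually uses afterwards --- the two quotient maps $Z=X^2$ and $U=X^{-2},\ V=YX^{-3}$ onto the elliptic curves $\mathcal{E}_1,\mathcal{E}_2$, which are then said to extend to the Jacobian --- is precisely what you reproduce in your last paragraph. Your sketch of the equivalences is the standard argument and is essentially correct: $(1)\Rightarrow(2)$ by factoring the cubic in $X^2$; $(2)\Rightarrow(1)$ by conjugating the covering involution of the pencil spanned by the $G_j$ to $X\mapsto -X$ (equivalently, simultaneously diagonalizing two of the quadratics); $(1)\Leftrightarrow(3)$ by conjugating $X\mapsto -X$ to $X\mapsto ab/X$. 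A few points would need tightening in a written-out version. In $(2)\Rightarrow(1)$ you should rule out the degenerate possibilities that the $G_j$ span only a one-dimensional space, or that two members of the pencil share a root: either would force a multiple root of the sextic and is excluded by smoothness (genus $2$), and the same smoothness, together with $\operatorname{char}\neq 2$, guarantees the pencil map really has degree $2$ with two distinct fixed points of the involution. In $(1)\Rightarrow(3)$ the phrase about the product being ``fixed to $1$'' is off: the rescaling fixes one \emph{root} to be $1$; the two remaining pairs share a common product, and that common product is then forced to be $ab$. Finally, a non-constant morphism $\mathcal{C}\to E_1\times E_2$ does not by itself give an isogeny on Jacobians; one needs the two pullback elliptic curves $f_1^*E_1,\ f_2^*E_2\subset\operatorname{Jac}(\mathcal{C})$ to be distinct, e.g.\ because $\iota^*$ acts as $+1$ on the first and as $-1$ on the second (the hyperelliptic involution acting as $-1$ on the Jacobian), so they meet in finitely many points and the sum map is an isogeny for dimension reasons. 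For the conclusion actually asserted in the theorem --- mere reducibility --- the single quotient $f_1:\mathcal{C}\to E_1$ already suffices, since $f_{1*}f_1^*=[2]$ forces $\ker f_1^*\subset E_1[2]$ to be finite, so $f_1^*E_1$ is a nontrivial proper abelian subvariety. Your field-of-definition caveat is apt: the fractional linear transformations may only exist over an extension, which is harmless for the geometric reducibility statement the paper needs.
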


There are two maps of (\ref{criterion_condition1}) into elliptic curves
\begin{equation}
 \mathcal{E}_{1}: Y^2=c_3Z^3+c_2Z^2+c_1Z+c_0
\end{equation}
with $Z=X^2$ and 
\begin{equation}
 \mathcal{E}_{2}: V^2=c_0U^3+c_1U^2+c_2U+c_3
\end{equation}
with $U=X^{-2}, V=YX^{-3}$. These maps extend to maps of the Jacobian, which is therefore reducible (see \cite{Cass96}).
%\end{remark}

Hence, to apply Theorem \ref{criterion_genus_2} we find a different model for $\mathcal{C}_{z}$. In particular we will put our curve in form (\ref{criterion_condition1}).

\begin{lemma}\label{curve_no_odd_terms}
 The curve (\ref{curve_bir_equiv}) is equivalent to the curve
\begin{equation}\label{curve_equiv}
 Y^2= (1-z)X^6+3(2+z)X^4+3(3-z)X^2+z.
\end{equation}
\begin{proof}
 Consider the fractional linear transformation given by
$$X\to \frac{X+1}{X-1}$$
$$Y \to \frac{2Y}{(X-1)^3}$$
\end{proof}

\end{lemma}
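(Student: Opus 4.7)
The claim is that applying a specific fractional linear transformation turns the sextic $X^6 + 2(1-2z)X^3 + 1$ into a polynomial in $X^2$ (with coefficients depending on $z$). Since the hypothesis identifies the transformation explicitly, the plan is simply to verify the identity by direct substitution and expansion.

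The plan is to substitute $X \mapsto (X+1)/(X-1)$ and $Y \mapsto 2Y/(X-1)^3$ into $Y^2 = X^6 + 2(1-2z)X^3 + 1$ and clear denominators. The left-hand side becomes $4Y^2/(X-1)^6$, and multiplying through by $(X-1)^6$ reduces the problem to checking the polynomial identity
\begin{equation*}
4Y^2 \;=\; (X+1)^6 \,+\, 2(1-2z)(X+1)^3(X-1)^3 \,+\, (X-1)^6.
\end{equation*}
The right-hand side only involves $X^2$ because $(X+1)^6 + (X-1)^6$ has no odd terms (odd binomial coefficients cancel) and $(X+1)^3(X-1)^3 = (X^2-1)^3$.

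The routine calculation proceeds by expanding: $(X+1)^6 + (X-1)^6 = 2X^6 + 30X^4 + 30X^2 + 2$ and $(X^2-1)^3 = X^6 - 3X^4 + 3X^2 - 1$. Combining these with the coefficient $2(1-2z)$ yields
\begin{equation*}
4Y^2 = 4\bigl[(1-z)X^6 + 3(2+z)X^4 + 3(3-z)X^2 + z\bigr],
\end{equation*}
so dividing by $4$ gives the target equation. Since we are working in characteristic different from $2$ (which is guaranteed because $q$ is an odd prime in the setup of Section \ref{proof l=3}) and the transformation $(X,Y)\mapsto((X+1)/(X-1),\,2Y/(X-1)^3)$ is of the form \eqref{frac_linear_transf} with $a=d=e=1$, $b=1$, $c=-1$ (so $ad-bc=2\neq 0$), it is an admissible equivalence of genus-$2$ curves, and no step presents any real obstacle beyond bookkeeping in the expansion.
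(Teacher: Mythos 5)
Your proof is correct and follows exactly the paper's route: the paper simply exhibits the transformation $X\to\frac{X+1}{X-1}$, $Y\to\frac{2Y}{(X-1)^3}$ and leaves the expansion to the reader, which you carry out correctly via $(X+1)^6+(X-1)^6=2X^6+30X^4+30X^2+2$ and $(X+1)^3(X-1)^3=(X^2-1)^3$. The only blemish is the identification with the general form (\ref{frac_linear_transf}), where the correct parameters are $a=b=1$, $c=1$, $d=-1$ (so $ad-bc=-2\neq 0$) and $e=2$, not $a=d=e=1$, $c=-1$; this bookkeeping slip does not affect the argument.
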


\par Combining Lemma \ref{curve_no_odd_terms} and the observation at the end of Theorem \ref{criterion_genus_2}, we find two maps from (\ref{curve_equiv}) to the elliptic curves
\begin{equation}
 \mathcal{E}_{1,z}: Y^2: (1-z)Z^3+3(2+z)Z^2+3(3-z)Z+z
\end{equation}

\begin{equation}
 \mathcal{E}_{2,z}: V^2=zU^3+3(3-z)U^2+3(2+z)U+(1-z)
\end{equation}
Notice that $\mathcal{E}_{1,z}$ and $\mathcal{E}_{2,z}$ have discriminant $6912z(1-z)$, which is non-zero since $z\neq 0,1$.
Also, after rescaling, we can write

\begin{equation}\label{elliptic_curve_E1}
 \mathcal{E}_{1,z}: Y^2: Z^3+3(2+z)Z^2+3(3-z)(1-z)Z+z(1-z)^2
\end{equation}
and
\begin{equation}\label{elliptic_curve_E2}
 \mathcal{E}_{2,z}: V^2=U^3+3(3-z)U^2+3(2+z)zU+(1-z)z^2
\end{equation}

%%%%%%%%%%%%%%%%%%%%%%%%%%%%%%%%%%%%%%%%%%%%%%%%%%%%
%%%poner definition of twist of elliptic curve%%%%%%
%%%%%%%%%%%%%%%%%%%%%%%%%%%%%%%%%%%%%%%%%%%%%%%%%%%%
\vspace{0.3cm}

As we mentioned above, the existence of these two maps implies that Jac$(\mathcal{C}_{z})$ is isogenous to
$\mathcal{E}_{1,z}\times \mathcal{E}_{2,z}$. Next, we see that these elliptic curves are not totally independent of each other. In fact, one is isogenous to a twist of the other as we see in our next result.

\begin{prop}\label{elliptic_curves_isogenous}
 %For a prime $q\equiv 1 \pmod{3}$, 
The curve $\mathcal{E}_{1,z}$ is isogenous to the twisted curve $(\mathcal{E}_{2,z})_{-3}$. 
%over $\F_{q}$.

\begin{proof}
Consider the equation for the twisted curve $(\mathcal{E}_{2,z})_{-3}$:
\begin{equation}
(\mathcal{E}_{2,z})_{-3} : V^2=U^3-9(3-z)U^2+27(2+z)zU-27 (1-z)z^2
\end{equation}
Define $\varphi : \mathcal{E}_{1,z}\to (\mathcal{E}_{2,z})_{-3}$ such that $\varphi[0:1:0]=[0:1:0]$ and 
\begin{equation}\label{isogeny_genus2}
 \varphi[x:y:1] = \left[\frac{x^3+Ax^2+Bx+C}{(x+(z-1))^2}:\frac{(x^3+Dx^2+Ex+F)y}{(x+(z-1))^3}:1\right]
\end{equation}
where 
$\begin{cases} A=9 \\ B=3(1-z)(z+9)\\ C=(27-2z)(z-1)^2 \\ D=3(z-1)\\ E=3(z+15)(z-1)\\ F=(z-81)(z-1)^2.\end{cases}$\\

\noindent One can check by hand or with Maple for example, that the map $\varphi$ is well defined and gives an isogeny between the two curves.
\end{proof}
\end{prop}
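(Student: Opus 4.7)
The plan is to exhibit an explicit rational map $\varphi: \mathcal{E}_{1,z} \to (\mathcal{E}_{2,z})_{-3}$ sending the identity to the identity; since both curves are smooth projective of genus one, any such non-constant map is automatically an isogeny, so the proposition reduces to the construction and verification of $\varphi$.

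First I would write out the twisted model explicitly: twisting $\mathcal{E}_{2,z}: V^2 = U^3 + 3(3-z)U^2 + 3(2+z)z\,U + (1-z)z^2$ by $-3$ yields
\[
(\mathcal{E}_{2,z})_{-3}: V^2 = U^3 - 9(3-z)U^2 + 27(2+z)z\,U - 27(1-z)z^2.
\]
Comparing $j$-invariants as rational functions of $z$ should confirm that $\mathcal{E}_{1,z}$ and $(\mathcal{E}_{2,z})_{-3}$ are not isomorphic in general, so the isogeny is of degree $>1$. A strong heuristic for the correct degree comes from the fact that the genus-two curve $\mathcal{C}_z$ carries the order-three automorphism $y \mapsto \zeta_3\, y$, making $\mathrm{Jac}(\mathcal{C}_z)$ into a $\Z[\zeta_3]$-module; this extra structure is expected to relate the two elliptic quotients by a $3$-isogeny. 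I would therefore search for $\varphi$ in the standard degree-three form $x \mapsto p(x)/q(x)^2$, $y \mapsto r(x)\,y/q(x)^3$ with $\deg p = \deg r = 3$ and $\deg q = 1$, pinning down the denominator $q$ by factoring the $3$-division polynomial $\psi_3$ of $\mathcal{E}_{1,z}$ over $\F_q(z)$ to locate a rational cyclic subgroup of order $3$.

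Once such a kernel is identified, V\'elu's formulas output $p$, $q$, $r$ explicitly together with a Weierstrass model of the quotient; matching that model against $(\mathcal{E}_{2,z})_{-3}$ via an admissible rescaling in $U$ and $V$ should absorb precisely the twist factor $-3$. The main obstacle is the final verification step, which is tedious but purely mechanical: substitute $\varphi_1(x) = p(x)/q(x)^2$ and $\varphi_2(x,y) = r(x)\,y/q(x)^3$ into the defining equation of $(\mathcal{E}_{2,z})_{-3}$, reduce modulo the relation $y^2 = x^3 + 3(2+z)x^2 + 3(3-z)(1-z)x + z(1-z)^2$, and check that the resulting polynomial in $\F_q(z)[x]$ vanishes identically. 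The algebra is dense enough that this last check is most safely carried out in a computer algebra system, as the author's proof indicates, with an additional sanity check that $\varphi$ preserves the point at infinity and is non-constant (both visible from the degrees of $p$, $q$, $r$).
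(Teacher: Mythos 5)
Your proposal is correct and follows essentially the same route as the paper: both come down to exhibiting an explicit degree-$3$ rational map of the shape $\bigl(p(x)/q(x)^2,\; r(x)\,y/q(x)^3\bigr)$ with $q$ linear (the paper's denominator $x+(z-1)$ is exactly the kernel $x$-coordinate $1-z$, a root of the $3$-division polynomial) and verifying by direct computer-algebra substitution that it lands on $(\mathcal{E}_{2,z})_{-3}$. The only difference is that you also explain where the map comes from (factoring $\psi_3$ and applying V\'elu's formulas, then rescaling to absorb the twist), a derivation the paper leaves implicit by simply stating the coefficients.
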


\vspace{0.3cm}

Denote by $L(\mathcal{C}_{z}/\fq,T)$ the $L$-polynomial of $\mathcal{C}_{z}$ over $\fq$. Recall that we want to show that, for  $q \equiv 1 \pmod{3}$ we have $L(\mathcal{C}_{z}/\fq,T)=(1+aT+qT^2)^2$ for some $a \in \R$. So far, we have seen that 
$$L(\mathcal{C}_{z}/\fq,T)=(1+a_{1,q}(z)T+qT^2)(1+a_{2,q}(z)T+qT^2)$$
where $a_{1,q}(z)$ and $a_{2,q}(z)$ are the traces of Frobenius on the curves $\mathcal{E}_{1,z}$ and $\mathcal{E}_{2,z}$ respectively.
Therefore, we need to show that $a_{1,q}(z)=a_{2,q}(z)$, or equivalently, that $\# \mathcal{E}_{1,z}(\fq)=\# \mathcal{E}_{2,z}(\fq)$ for $q \equiv 1 \pmod{3}$. This is the statement of our next result.

\begin{cor}\label{same_points_ell_curves}
Let $q$ be a prime such that $q \equiv 1 \pmod{3}$. Then
$$\# \mathcal{E}_{1,z}(\F_{q})=\# \mathcal{E}_{2,z}(\F_{q}).$$

\begin{proof}
Fix $q$ in the conditions of the corollary. Let $a_{1,q}(z)$ and $a_{2,q}(z)$ be the traces of Frobenius on the elliptic curves $\mathcal{E}_{1,z}$ and $\mathcal{E}_{2,z}$ respectively, i.e.
$$\# \mathcal{E}_{1,z}(\F_{q})=q+1-a_{1,q}(z)$$
$$\# \mathcal{E}_{2,z}(\F_{q})=q+1-a_{2,q}(z)$$

\noindent Since $(\mathcal{E}_{2,z})_{-3}$ is a twist of $\mathcal{E}_{2,z}$ we have
$$\# (\mathcal{E}_{2,z})_{-3}(\F_{q})= 1+q-\left(\frac{-3}{q}\right)a_{2,q}(z)$$
where $\left(\frac{\cdot}{q}\right)$ is the Legendre symbol.

\noindent Now, by Proposition (\ref{elliptic_curves_isogenous}) we know that
$$\# (\mathcal{E}_{1,z})(\F_{q})=\# (\mathcal{E}_{2,z})_{-3}(\F_{q})$$ 
hence
$$a_{2,q}(z)=\left(\frac{-3}{q}\right)a_{1,q}(z).$$

\noindent To finish the proof, it only remains to see that $\left(\frac{-3}{q}\right)=1$ for all primes $q \equiv 1 \pmod{3}$.

\noindent Since the Legendre symbol is completely multiplicative on its top argument, we can decompose $\left(\frac{-3}{q}\right)=\left(\frac{-1}{q}\right)\left(\frac{3}{q}\right)$. Also
\begin{equation}\label{computation(-1/q)} \left(\frac{-1}{q}\right)=(-1)^{(q-1)/2}=\begin{cases} 1 & \textnormal{if} \,\,\, q\equiv 1 \pmod{4}\\ 
-1 &\textnormal{if}\,\,\, q\equiv 3 \pmod{4}. \end{cases}
\end{equation}
and
\begin{equation}\label{computation(3/q)} \left(\frac{3}{q}\right)=(-1)^{\lceil(q+1)/6\rceil}=\begin{cases} 1 & \textnormal{if} \,\,\, q\equiv 1,11 \pmod{12}\\ 
-1 &\textnormal{if}\,\,\, q\equiv 5,7 \pmod{12}. \end{cases}\end{equation}
We will divide the analysis in cases. First notice that since $q \equiv 1 \pmod{3}$ then $q$ must be congruent to either $1$ or $7$ $\pmod{12}$.  
\begin{itemize}
\item Suppose $q \equiv 1 \pmod{12}$ and therefore $\left(\frac{3}{q}\right)=1$ by (\ref{computation(3/q)}).
Also, since $q \equiv 1 \pmod{12}$, we have that $q \equiv 1 \pmod{4}$, hence $\left(\frac{-1}{q}\right)=1$ by (\ref{computation(-1/q)}).
Then $\left(\frac{-3}{q}\right)=1$ as desired.
\item Suppose $q \equiv 7 \pmod{12}$, then $\left(\frac{3}{q}\right)=-1$. Also, in this case $q \equiv 3 \pmod{4}$, and so $\left(\frac{-1}{q}\right)=-1$, giving that $\left(\frac{-3}{q}\right)=1$ as desired.
\end{itemize}
Hence 
$$\# \mathcal{E}_{1,z}(\F_{q})=\# \mathcal{E}_{2,z}(\F_{q}) \,\,\,\, \text{for all}\,\,\, q \equiv 1 \pmod{3}.$$

\end{proof}
\end{cor}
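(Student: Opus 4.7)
The plan is to combine the isogeny produced in Proposition \ref{elliptic_curves_isogenous} with the standard behavior of Frobenius traces under quadratic twisting, and then to check one elementary Legendre-symbol identity. First, I would recall the fundamental fact that two elliptic curves over $\F_q$ which are isogenous over $\F_q$ share the same characteristic polynomial of Frobenius, and hence the same number of $\F_q$-points. Applied to Proposition \ref{elliptic_curves_isogenous}, this yields
$$\#\mathcal{E}_{1,z}(\F_q) \;=\; \#(\mathcal{E}_{2,z})_{-3}(\F_q).$$
So the problem reduces to comparing $\mathcal{E}_{2,z}$ with its quadratic twist by $-3$.

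Next, I would invoke the classical trace formula for quadratic twists: if $E/\F_q$ is an elliptic curve with $q$ odd and $d\in\F_q^\times$, then the quadratic twist $E_d$ satisfies $a_q(E_d) = \left(\frac{d}{q}\right) a_q(E)$. Setting $d=-3$ gives $a_{1,q}(z) = \left(\frac{-3}{q}\right) a_{2,q}(z)$, and so it suffices to show that $\left(\frac{-3}{q}\right)=1$ for every prime $q\equiv 1\pmod{3}$.

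Finally, this is a routine application of quadratic reciprocity. Writing $\left(\frac{-3}{q}\right)=\left(\frac{-1}{q}\right)\left(\frac{3}{q}\right)$, the first factor depends only on $q\pmod{4}$ while the second depends only on $q\pmod{12}$. The hypothesis $q\equiv 1\pmod{3}$ forces $q\equiv 1$ or $7\pmod{12}$, and in each of those two cases the two Legendre symbols are either both $+1$ or both $-1$, so their product is $+1$. (Equivalently, one can appeal to the classical fact that $-3$ is a square in $\F_q$ exactly when $\F_q$ contains a primitive cube root of unity, i.e.\ exactly when $q\equiv 1\pmod 3$.) This gives $a_{1,q}(z)=a_{2,q}(z)$ and the corollary follows.

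The only real obstacle I foresee is conceptual rather than computational: one has to be sure that the explicit isogeny $\varphi$ produced in Proposition \ref{elliptic_curves_isogenous} is defined over $\F_q$, not merely over an algebraic closure, so that the Frobenius eigenvalues are genuinely preserved and the twist formula can be applied as stated. Inspection of the formulas for $A,B,C,D,E,F$ in \eqref{isogeny_genus2} shows that all coefficients lie in the prime field, so this is automatic. Once that is verified, the whole argument collapses to the elementary Legendre-symbol identity above.
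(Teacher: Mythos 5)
Your proposal is correct and follows essentially the same route as the paper: the isogeny from Proposition \ref{elliptic_curves_isogenous} gives $\#\mathcal{E}_{1,z}(\F_q)=\#(\mathcal{E}_{2,z})_{-3}(\F_q)$, the quadratic-twist trace relation reduces everything to $\left(\frac{-3}{q}\right)=1$, and the same mod-$12$ Legendre-symbol computation finishes the argument. Your added observation that the isogeny coefficients lie in the prime field (so the comparison is genuinely over $\F_q$) is a sensible sanity check that the paper leaves implicit.
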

\vspace{0.2cm}

We have now all the necessary tools to complete the proof of Conjecture \ref{conject} for the case when $l=3$.

%%%%%%%%%%%%%%%%%%%%%%%%%%%%%%%%%%%%%%%%%%%%%%%%
%%%%%%%% proof conjecture for l=3%%%%%%%%%%%%%%%
%%%%%%%%%%%%%%%%%%%%%%%%%%%%%%%%%%%%%%%%%%%%%%%%

\begin{teo}\label{conjecture l=3}
Conjecture \ref{conject} is true for $l=3$.
\begin{proof}

First notice that when $l=3$ we have two different cases to consider, namely the curves with $(m,s)=(1,2)$ and $(m,s)=(2,1)$. However, by section \ref{hgf and ac} Corollary \ref{curves_have_same_number_points} these two curves have the same number of points over every finite field extension of $\F_{q}$, therefore they have the same zeta function over $\F_{q}$. Also, the hypergeometric functions that appear on the right hand side of equation (\ref{formula}) are the same for both curves. Because of these, it is enough to prove that the conjecture is true for one of these curves, say $\mathcal{C}_{z}: y^3=t(1-t)^2(1-zt)$.
As above, write the zeta function of $\mathcal{C}_{z}$ as
\begin{align*}
Z(\mathcal{C}_{z}/\F_{q};T)&=\frac{(1-\alpha_{1,q}(z)T)(1-\overline{\alpha_{1,q}(z)}T)(1-\alpha_{2,q}(z)T)(1-\overline{\alpha_{2,q}(z)}T)}{(1-T)(1-qT)}\\
& =\frac{(1-a_{1,q}(z)T+qT^2)(1-a_{2,q}(z)T+qT^2)}{(1-T)(1-qT)}
\end{align*}
where $a_{i,q}(z)=\alpha_{i,q}(z)+\overline{\alpha_{i,q}}(z)$.
Using the same notation as in equation (\ref{puntoscurva}), we have that
\begin{align}\label{formula1}
F_{1,q}(z)+ F_{2,q}(z) 
%&=-(\alpha_{1,q}(z)+\overline{\alpha_{1,q}}(z)+\alpha_{2,q}(z)+\overline{\alpha_{2,q}}(z))\nonumber\\
& =-(a_{1,q}(z)+a_{2,q}(z))
\end{align}

\noindent Recall that $F_{1,q}(z)=\,_2F_1[\eta_{q}^{2},\eta_{q};\varepsilon|z]$ and $F_{2,q}(z)=\, _{2}F_{1}[\eta_{q},\eta_{q}^{2};\varepsilon|z]$, therefore, Corollary \ref{conjugate_hpgf} in section \ref{preliminaries} implies that $F_{1,q}(z)=F_{2,q}(z)$. 
Also, as we have seen in Corollary \ref{same_points_ell_curves}, $a_{1,q}(z)=a_{2,q}(z)$.
Hence, (\ref{formula1}) becomes
$$2F_{1,q}(z)=-2a_{1,q}(z)$$ 
so $a_{1,q}(z)=-F_{1,q}(z)$ and $a_{2,q}(z)=-F_{2,q}(z)$, proving the conjecture for $l=3$. 
\end{proof}
\end{teo}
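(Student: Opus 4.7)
The plan is to establish, separately, that $F_{1,q}(z) = F_{2,q}(z)$ and $a_{1,q}(z) = a_{2,q}(z)$, and then combine these with the identity obtained from comparing Theorem~\ref{thmformula} against the Weil factorization (\ref{formulapuntos}). As a first simplification, Corollary~\ref{curves_have_same_number_points} reduces the problem to the single curve $\mathcal{C}_z : y^3 = t(1-t)^2(1-zt)$ of genus $2$. Equation (\ref{puntoscurva}) gives $\#\mathcal{C}_z(\F_q) = q + 1 + F_{1,q}(z) + F_{2,q}(z)$, while (\ref{formulapuntos}) gives $\#\mathcal{C}_z(\F_q) = q + 1 - a_{1,q}(z) - a_{2,q}(z)$; subtracting these yields the summed identity
\[
F_{1,q}(z) + F_{2,q}(z) = -\bigl(a_{1,q}(z) + a_{2,q}(z)\bigr).
\]
The theorem will follow once each side is shown to split as a pair of equal summands.

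For the hypergeometric side, note that $F_{1,q}(z) = q \cdot {}_2F_1[\eta_q^2, \eta_q; \varepsilon \mid z]$ and $F_{2,q}(z) = q \cdot {}_2F_1[\eta_q, \eta_q^2; \varepsilon \mid z]$. Since $\eta_q$ has order $3$, we have $\eta_q^2 = \overline{\eta_q}$, so the two arguments are mutually inverse characters. Corollary~\ref{conjugate_hpgf} then applies verbatim and gives $F_{1,q}(z) = F_{2,q}(z)$.

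For the Frobenius side, I invoke the chain of birational models from Lemmas~\ref{curve_biratl} and \ref{curve_no_odd_terms} to realize $\mathcal{C}_z$ as a genus-$2$ curve in the form (\ref{criterion_condition1}). By Theorem~\ref{criterion_genus_2}, the Jacobian of $\mathcal{C}_z$ is isogenous to $\mathcal{E}_{1,z} \times \mathcal{E}_{2,z}$, and hence the $L$-polynomial factors as $(1 - a_{1,q}(z)T + qT^2)(1 - a_{2,q}(z)T + qT^2)$ where $a_{i,q}(z)$ is the Frobenius trace of $\mathcal{E}_{i,z}$. Proposition~\ref{elliptic_curves_isogenous} identifies $\mathcal{E}_{1,z}$ with an isogenous image of the quadratic twist $(\mathcal{E}_{2,z})_{-3}$, which forces $a_{1,q}(z) = \bigl(\tfrac{-3}{q}\bigr)\, a_{2,q}(z)$. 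A case analysis on $q \bmod 12$, carried out in Corollary~\ref{same_points_ell_curves}, shows $\bigl(\tfrac{-3}{q}\bigr) = 1$ whenever $q \equiv 1 \pmod 3$, so $a_{1,q}(z) = a_{2,q}(z)$.

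Combining both equalities with the summed identity collapses it to $2 F_{1,q}(z) = -2 a_{1,q}(z)$, so $F_{i,q}(z) = -a_{i,q}(z)$ for $i = 1, 2$, which is the conjecture for $l=3$. The main obstacle in this approach is the geometric splitting: one must find a birational model that exposes a reducible Jacobian, and then produce an explicit isogeny between $\mathcal{E}_{1,z}$ and a specific quadratic twist of $\mathcal{E}_{2,z}$. The twisting constant must be $-3$ precisely because the congruence condition $q \equiv 1 \pmod 3$ is exactly what makes $-3$ a square modulo $q$; any other choice would fail to align the two Frobenius traces.
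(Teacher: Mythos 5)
Your proposal is correct and follows essentially the same route as the paper: reduce to $\mathcal{C}_z: y^3=t(1-t)^2(1-zt)$ via Corollary~\ref{curves_have_same_number_points}, derive the summed identity from (\ref{formulapuntos}) and (\ref{puntoscurva}), split the hypergeometric side with Corollary~\ref{conjugate_hpgf} and the Frobenius side with Corollary~\ref{same_points_ell_curves} (via Lemmas~\ref{curve_biratl}, \ref{curve_no_odd_terms}, Theorem~\ref{criterion_genus_2}, and Proposition~\ref{elliptic_curves_isogenous}), and collapse to $2F_{1,q}(z)=-2a_{1,q}(z)$. No substantive differences from the paper's argument.
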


\vspace{0.3cm}

\section{Proof of Conjecture for $l=5$}\label{proof l=5}
\vspace{0.3cm}

Our next objective is to prove that Conjecture \ref{conject} also holds when $l=5$.
The proof has some ingredients in common with the previous case, however is not completely analogous and requires some different techniques as we will see. 

Consider the smooth projective curve with affine model
\begin{equation}\label{curve_l=5}
 \mathcal{C}_{z}: y^5=t(1-t)^4(1-zt)
\end{equation}
over a finite field $\F_{q}$ with $q$ prime, $q \equiv 1 \pmod{5}$ and $z\in\F_{q}\backslash\{0,1\}$.
Notice that, by performing the same transformations done in Lemma \ref{curve_biratl} and the fractional linear transformation
$$X\to \frac{X+1}{X-1}$$
$$Y \to \frac{2Y}{(X-1)^5}$$
on the curve (\ref{curve_l=5}) we get the following result.
\begin{lemma}
The curve $\mathcal{C}_{z}: y^5=t(1-t)^4(1-zt)$ is equivalent to the curve
\begin{equation}\label{curve_equiv_l=5}
 \mathcal{C}:Y^2= (1-z)X^{10}+(20+5z)X^8+(110-10z)X^6+(100+10z)X^4+(25-5z)X^2+z.
\end{equation}
\end{lemma}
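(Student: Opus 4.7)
The plan is to follow the same three-step recipe used in the proof of Lemma~\ref{curve_biratl}, namely translate-and-regroup, substitute $y=tX$ and complete the square, and then apply the fractional linear transformation given in the hint to kill the odd-degree terms. The only real change is that all relevant degrees shift upward by $2$ because $l=3$ has been replaced by $l=5$.

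First I would translate $t \mapsto 1-t$ in $y^5 = t(1-t)^4(1-zt)$ so that the quadruple factor sits at the origin. A short expansion yields
$$y^5 = (1-z)t^4 + (2z-1)t^5 - z\,t^6,$$
and after dividing through by $z$ (nonzero by hypothesis) this regroups as $G_4(t,y)+G_5(t,y)+G_6(t,y)=0$ with each $G_i$ homogeneous of degree $i$, in exact analogy with the decomposition of Lemma~\ref{curve_biratl}. Substituting $y = tX$ pulls out an overall factor $t^i$ from each $G_i$, so dividing by $t^4$ leaves a monic quadratic in $t$ whose coefficients are polynomials in $X$. Completing the square (legitimate since $\operatorname{char}(\F_q)\neq 2$) and rescaling $Y \to z^{-1}Y$ produces the intermediate model
$$Y^2 = X^{10} + 2(1-2z)X^5 + 1,$$
the natural $l=5$ analogue of (\ref{curve_bir_equiv}).

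Next, I would apply the fractional linear substitution $X \mapsto (X+1)/(X-1)$, $Y \mapsto 2Y/(X-1)^5$. Clearing the common denominator $(X-1)^{10}$ converts the equation into
$$4Y^2 = (X+1)^{10} + 2(1-2z)(X^2-1)^5 + (X-1)^{10}.$$
Both $(X+1)^{10}+(X-1)^{10}$ and $(X^2-1)^5$ contain only even powers of $X$, so every odd-degree term on the right cancels automatically. Expanding each expression via the binomial theorem and collecting even-degree terms then yields exactly the polynomial claimed in the statement.

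The only genuine obstacle is the arithmetic of this last step: one must verify that the six nonzero coefficients collapse to $(1-z)$, $(20+5z)$, $(110-10z)$, $(100+10z)$, $(25-5z)$, and $z$. Since the coefficient of $X^{2k}$ has the shape $\tfrac{1}{2}\binom{10}{2k} + \tfrac{1-2z}{2}(-1)^{5-k}\binom{5}{k}$, the verification is routine binomial bookkeeping and poses no substantive difficulty.
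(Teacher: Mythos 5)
Your proposal is correct and follows essentially the same route as the paper, which itself obtains the lemma by repeating the transformations of Lemma~\ref{curve_biratl} (translate $t\mapsto 1-t$, set $y=tX$, complete the square, rescale) to reach $Y^2=X^{10}+2(1-2z)X^5+1$ and then applying the fractional linear substitution $X\mapsto (X+1)/(X-1)$, $Y\mapsto 2Y/(X-1)^5$. Your coefficient formula $\tfrac12\binom{10}{2k}+\tfrac{1-2z}{2}(-1)^{5-k}\binom{5}{k}$ indeed reproduces $(1-z)$, $(20+5z)$, $(110-10z)$, $(100+10z)$, $(25-5z)$, $z$, so the verification is complete.
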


\noindent Define the curves $\mathcal{H}_{1,z}: y^2=f(x)$ and $\mathcal{H}_{2,z}:y^2=g(x)$ where
\begin{equation}\label{H1}
f(x) = (1-z)x^{5}+(20+5z)x^4+(110-10z)x^3+(100+10z)x^2+(25-5z)x+z
\end{equation}
and
\begin{equation}\label{H2}
g(x)=zx^5+(25-5z)x^4+(100+10z)x^3+(110-10z)x^2+(20+5z)x+(1-z).
\end{equation}

\noindent Then, by the same argument in the previous section, we can find two maps from $\mathcal{C}$ to $\mathcal{H}_{1,z}$ and $\mathcal{H}_{2,z}$, and extending these maps to the Jacobians of the curves, we conclude that Jac($\mathcal{C}$) is isogenous to Jac($\mathcal{H}_{1,z}$)$\times$ Jac($\mathcal{H}_{2,z}$). We start by showing that the L-polynomial of $\mathcal{C}_{z}$ over $\F_{q}$ with $q \equiv 1 \pmod{5}$ is a perfect square. First, we recall some results about abelian varieties.

\vspace{0.3cm}
Let $k$ be a perfect field, which will eventually be finite. Recall that an \emph{abelian variety over $k$} is a subset of some projective $n$-space over $k$ which
\begin{enumerate}
 \item is defined by polynomial equations on the coordinates (with coefficients in $k$),
\item is connected, and
\item has a group law which is algebraic (i.e., the coordinates of the sum of two points are rational functions of the coordinates of the factors).
\end{enumerate}
We say that an abelian variety over $k$ is \emph{simple} if it has no nontrivial abelian subvarieties. We have the following result.

\begin{teo}(Poincar\'e-Weil) 
Every abelian variety over $k$ is isogenous to a product of powers of nonisogenous simple abelian varieties over $k$.
\end{teo}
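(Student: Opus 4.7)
The plan is to deduce the Poincaré--Weil theorem from Poincaré's complete reducibility theorem, which asserts that any abelian subvariety $B \subset A$ admits an isogeny complement, i.e., an abelian subvariety $C \subset A$ such that the addition map $B \times C \to A$ is an isogeny. Once this reducibility statement is available, the full theorem follows by a straightforward induction on $\dim A$: if $A$ is simple there is nothing to do; otherwise one chooses a nontrivial proper abelian subvariety $B$, decomposes $A$ up to isogeny as $B \times C$ with both factors of strictly smaller dimension, and applies the inductive hypothesis to each factor. A final bookkeeping step groups together mutually isogenous simple factors to rewrite the product as a product of powers of pairwise nonisogenous simple abelian varieties, using that ``isogenous'' is an equivalence relation on abelian varieties.

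The real content is therefore the reducibility theorem itself, and that is where I would concentrate effort. The idea is to use a polarization: since $A$ is projective, it carries an ample line bundle $L$, which yields an isogeny $\phi_L: A \to A^{\vee}$ to the dual abelian variety. Given an abelian subvariety $i: B \hookrightarrow A$, the restriction $L|_B$ remains ample, so the composite $\phi_{L|_B} = i^{\vee} \circ \phi_L \circ i : B \to B^{\vee}$ is itself an isogeny. I would then define $C$ to be the connected component of the identity of $\ker(i^{\vee} \circ \phi_L) \subset A$, so that $C$ is automatically an abelian subvariety. The key verifications are that $B \cap C$ is a finite group scheme (which follows because $\phi_{L|_B}$ is an isogeny, so any element of $B$ lying in $C$ must be in $\ker \phi_{L|_B}$) and that $\dim B + \dim C = \dim A$ (which follows from the fact that $\phi_L$ is surjective and $C$ is the identity component of a fibre up to translation). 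These two facts together force the natural addition map $B \times C \to A$ to be an isogeny.

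The main obstacle is the dimension count for $C$: one must control $\dim \ker(i^{\vee} \circ \phi_L)$, which amounts to comparing $\dim A$, $\dim B$, and the dimension of $B^{\vee}$, together with the fact that $\phi_L$ is an isogeny (so finite-to-one). A clean way to handle this is to work inside $\mathrm{End}^0(A) = \mathrm{End}(A) \otimes \mathbb{Q}$ and exhibit the projection onto $B$ as an idempotent, reducing the geometric question to a statement about semisimplicity of $\mathrm{End}^0(A)$; but this semisimplicity is essentially equivalent to the reducibility theorem, so it does not offer a shortcut. In practice, I expect the cleanest route is the polarization-based argument above, with the dimension computation done via the fact that $\phi_L$ induces an isomorphism between $A$ and $A^\vee$ up to isogeny and that $i^\vee: A^\vee \to B^\vee$ is surjective.
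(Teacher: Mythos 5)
The paper does not actually prove this statement: it is quoted as a classical fact (with the surrounding discussion pointing to Lang's and Mumford's books), so there is no internal argument to compare yours against. On its own merits, your proposal is the standard and correct route: reduce to Poincar\'e's complete reducibility theorem, prove that via a polarization $\phi_L:A\to A^{\vee}$, take $C$ to be the (reduced) identity component of $\ker(i^{\vee}\circ\phi_L)$, check $B\cap C$ is finite because $\phi_{L|_B}=i^{\vee}\circ\phi_L\circ i$ is an isogeny (ampleness being preserved under restriction), get $\dim C=\dim A-\dim B$ from the surjectivity of $i^{\vee}\circ\phi_L$ (the dual of the injection $i$ is surjective, and $\phi_L$ is an isogeny), conclude that addition $B\times C\to A$ is an isogeny, and finish by induction on dimension plus regrouping isogenous simple factors, using that isogeny is an equivalence relation. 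Two small points you should make explicit if you write this up in full: in positive characteristic one takes the reduced identity component of the kernel, and since the base field $k$ of the paper is perfect this reduced connected component is again an abelian subvariety defined over $k$, so the whole decomposition is obtained over $k$ and not merely over $\overline{k}$; and the identity $\phi_{i^{*}L}=i^{\vee}\circ\phi_L\circ i$, which you use twice, deserves a citation or a one-line verification on points. With those caveats the argument is complete and is exactly the textbook proof (Mumford, \emph{Abelian Varieties}), which is presumably the source the paper has in mind.
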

 
\vspace{0.3cm}

Consider $\mathcal{C}_{z}: y^5=t(1-t)^4(1-zt)$ over the algebraically closed field $\overline{\Q}$ and let $\zeta:=e^{2\pi i/5}$ be a fifth root of unity. Then the map $[\zeta]:\mathcal{C}_{z}\to \mathcal{C}_{z}$ defined by $[\zeta](t,y)=(t,\zeta y)$ defines an automorphism on the curve $\mathcal{C}_{z}$. Denote $J_{z}:= \text{Jac}(\mathcal{C}_{z})$ and $J_{i,z}:=\text{Jac}(\mathcal{H}_{i,z})$, for $i=1,2$. 

The automorphism $[\zeta]$ induces a map from $J_{z}$ to itself, hence 
$$[\zeta]\in \text{End}(J_{z}).$$

\noindent On the other hand, as we mentioned above, we can find an isogeny over $\Q$
$$\phi: J_{1,z}\times J_{2,z}\to J_{z}.$$ Applying $\phi$ we get
$$\phi(J_{1,z})\subseteq J_{z}$$
where $J_{1,z}$ here denotes $J_{1,z}\times \{0\}$. Similarly
$$\phi(J_{2,z})\subseteq J_{z}.$$

We also have $$[\zeta](\phi(J_{i,z}))\subseteq J_{z}$$
for $i=1,2$.
\vspace{0.3cm}

Consider now the curve $y^5=t(1-t)^4(1-zt)$ defined over $\overline{\Q(z)}$.
We can apply to this curve the same argument we did before, and we can see that $J_{i,z}$ are simple abelian varieties over $\overline{\Q(z)}$. Otherwise, if $J_{i,z}$ is isogenous to the product of two elliptic curves, then, for all $z$ the L-polynomial would have two quadratic factors, which is not the case. (See example in section \ref{example} at the end of this section). Therefore, we have $\phi(J_{1,z})$ and $[\zeta](\phi(J_{1,z}))$ two simple abelian varieties inside $J_{z}$. By Poincar\'e complete reducibility theorem, we have that
either $\phi(J_{1,z})\cap [\zeta](\phi(J_{1,z}))$ is finite or $\phi(J_{1,z})=[\zeta](\phi(J_{1,z}))$.

\vspace{0.3cm}
\begin{itemize}
 \item Case 1: $\phi(J_{1,z})\cap [\zeta](\phi(J_{1,z}))$ is finite.\\
In this case, by dimension count we have 
$$[\zeta](\phi(J_{1,z}))+\phi(J_{1,z})=J_{z}.$$
Then, since $\phi(J_{1,z})$  and $\phi(J_{2,z})$ are simple abelian varieties, the Poincar\'e -Weil Theorem implies that  
$$[\zeta](\phi(J_{1,z})) \approx \phi(J_{2,z})$$
over $\Q(\zeta)$, where $\approx$ denotes isogeny. Notice that this isogeny will exist over any field containing a fifth root of unity, therefore, finite fields $\F_{q}$ with $q \equiv 1 \pmod{5}$ are fine. Then, we get that $[\zeta](\phi(J_{1,z}))$ is isogenous to $\phi(J_{2,z})$ over $\F_{q}$ for $q \equiv 1 \pmod{5}$.

\item Case 2: $[\zeta](\phi(J_{1,z}))=\phi(J_{1,z})$.\\
For this case, we recall first some facts about abelian varieties (for details see \cite{La83} or \cite{Mu85}). Suppose $A/\overline{k}$ is a simple abelian variety of dimension $g$, and denote $\Delta:= \text{End}_{\overline{k}}(A)\otimes _{\Z}\Q$. Then, Poincar\'e's complete reducibility Theorem implies that $\Delta$ is a division algebra. Also, from the theory of division algebras we know that the dimension of a division algebra over its center is a perfect square, hence, if $K=\{x \in \Delta : xa=ax\,\, \text{for all} \,\, a \in \Delta\}$ is the center of $\Delta$, we have $[\Delta : K]=d^2$ for some integer $d$. On the other hand, if $[K:\Q]=e$ then $de|2g$, moreover, in characteristic zero, we have that $d^2e|2g$. 

Now that we have reviewed the results we need we can go back to case 2. 
In this case, let $\Delta :=\text{End}(\phi(J_{1,z}))\otimes_{\Z}\Q$ and $K$ be its center. Applying the results above, we can assume that $[\Delta : K]=d^2$ and  $[K:\Q]=e$, for some integers $d$ and $e$. Since the dimension of $\phi(J_{1,z})=2$ and char$(\overline{\Q(z)})=0$, we have that $d^2e|4$. 

By assumption, we have that
$$[\zeta] \in \text{End}(\phi(J_{1,z})),$$
hence
$$\Q(\zeta) \subseteq \text{End}(\phi(J_{1,z}))\otimes _{\Z} \Q :=\Delta.$$
Now, we have $$\Q \subseteq \Q(\zeta)\subseteq \Delta,$$
$$[\Q(\zeta):\Q]=4$$
and
$$[\Delta: \Q]|4.$$
Therefore, $4=4[\Delta:\Q(\zeta)]$, hence $$\Q(\zeta)=\Delta$$
i.e., $\text{End}(\phi(J_{1,z}))\otimes _{\Z}\Q$ is a field of degree 4 over $\Q$.

Recall the following definition.
\begin{df}
 A totally imaginary quadratic extension of a totally real field is called a CM field (Complex Multiplication).
\end{df}

Then, $\Delta$ is a CM field, since it is equal to $\Q(\zeta)$ and every cyclotomic field is a CM field ($\Q \subseteq \Q(\zeta, \overline{\zeta}) \subseteq \Q(\zeta)$). 
%Also, since $\text{End}(\phi(J_{1,z}))\otimes _{\Z} \Q$ has maximum dimension 4, we say that the field has full CM.

%%%%%%%%%%%%%%%%%%%%%%%%%%%%%%%%%%%%%%%%%%%%%%%%%%%%%%%
%% definicion de constant family of ab, varieties%%%%%
%%%%%%%%%%%%%%%%%%%%%%%%%%%%%%%%%%%%%%%%%%%%%%%%%%%%%%%%

\begin{teo}[Shimura \cite{Sh05}]
Let $k$ be a field of characteristic zero. Over $k$ there do not exist non-constant families of abelian varieties with full CM (i.e., the endomorphism ring has maximal dimension). 
\end{teo}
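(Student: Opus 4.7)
The plan is to exploit the rigidity of full-CM abelian varieties, which are classified up to isogeny by the discrete data of a CM field $E$ of degree $2g$ and a CM type $\Phi\subset\Hom(E,\C)$. Consequently the locus of CM points inside the moduli space $\mathcal{A}_{g}$ of principally polarized abelian varieties of dimension $g$ is countable, and a non-constant family would produce a positive-dimensional image there, forcing a contradiction.

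First I would reduce to the case $k=\C$ by choosing an embedding of $k$ into $\C$; a non-constant family over $k$ remains non-constant after this base change, and every fiber still has full CM. Given such a family $\pi\colon\mathcal{A}\to S$ over a connected base $S$, I consider the associated period map $\varphi\colon S(\C)\to\mathcal{A}_{g}(\C)=\mathfrak{H}_{g}/\Gamma$, passing to a level cover of $S$ if needed in order to trivialize monodromy. If $\pi$ is not isotrivial then $\varphi$ has positive-dimensional image.

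The core step is to show that this image lies in a countable set. For each $s\in S(\C)$, the fiber $A_{s}$ carries a CM field $E_{s}$ of degree $2g$ inside its endomorphism algebra, and the Hodge decomposition $H^{1}(A_{s},\C)=H^{1,0}\oplus H^{0,1}$ is controlled by the action of $E_{s}\otimes_{\Q}\C\cong\C^{\Hom(E_{s},\C)}$: the subspace $H^{0,1}$ is the direct sum of the coordinate lines indexed by the CM type $\Phi_{s}\subset\Hom(E_{s},\C)$. Since there are only countably many CM fields of degree $2g$ up to isomorphism, and only finitely many CM types on each, one obtains countably many possible polarized Hodge structures, hence countably many possible points in $\mathcal{A}_{g}(\C)$. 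A holomorphic map from a connected complex variety into a countable set must be constant, contradicting that $\varphi$ has positive-dimensional image.

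The main obstacle is justifying that the Hodge decomposition is truly pinned down by $(E_{s},\Phi_{s})$, without any residual continuous modulus. This ultimately rests on the classical Shimura--Taniyama theory: the Mumford--Tate group of a full-CM abelian variety is an algebraic torus whose Hodge cocharacter is exactly the character cut out by the CM type, so the Hodge filtration on $H^{1}$ is rigidly determined once $(E,\Phi)$ is fixed. A secondary technical point is handling polarizations and level structures carefully enough to land in an honest moduli space, but this is standard and follows from the fine/coarse moduli properties of $\mathcal{A}_{g}$ together with Torelli for abelian varieties.
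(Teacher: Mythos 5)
The paper offers no proof of this statement: it is imported verbatim as a theorem of Shimura, with a pointer to Milne's survey \cite{Sh05}, and is used as a black box in the $l=5$ argument to exclude Case~2. So there is nothing in the paper to compare your argument against line by line; what you have written is an independent proof sketch, and it is essentially the standard rigidity argument: full-CM Hodge structures are determined by discrete data, so the CM locus in $\mathcal{A}_{g}(\C)$ is countable, while a non-isotrivial family over a connected base of finite type would have an uncountable image in coarse moduli. Three points need tightening before this is a complete proof. First, an arbitrary field $k$ of characteristic zero need not embed into $\C$ (cardinality can exceed the continuum); since the family is of finite type you should first descend to a finitely generated subfield of $k$, which does embed into $\C$. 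Second, countability of CM points does not follow from countability of the pairs $(E,\Phi)$ alone: a point of $\mathcal{A}_{g}(\C)$ also records the integral lattice and the polarization, which is precisely the ``residual modulus'' you flag. The repair is that for fixed $(E,\Phi)$ the lattice is a full-rank module over an order of $E$ inside $E$, and there are only countably many such lattices up to isomorphism and countably many compatible polarizations, so the CM locus is still countable; this step should be stated, since Shimura--Taniyama by itself only pins down the rational Hodge structure. Third, what your argument literally proves is that the induced map to $\mathcal{A}_{g}$ is constant, i.e.\ the family is isotrivial; that is the correct reading of ``non-constant'' here, and it is exactly how the paper applies the theorem, since the non-constancy of the family $\phi(J_{1,z})$ is certified by varying Igusa invariants, i.e.\ by a varying moduli point. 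With those repairs your sketch is a sound proof of the statement in the form the paper needs it.
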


However, our family $\phi(J_{1,z})$ is non-constant, as it can be computationally checked with Magma using Igusa invariants. 
\end{itemize}

\noindent Therefore, only case 1 is possible, and we have 
$$[\zeta](\phi(J_{1,z})) \approx \phi(J_{2,z}).$$

\vspace{0.4cm}
%%%%%%%%%%%%%%%%%%%%%%%%%%%%%%%%%%%%%%%%%%%%%%%%%%%%%%%%%%
%%%%%%%%%%%%%%%%%%conjecture for l=5%%%%%%%%%%%%%%%%%%%%%
%%%%%%%%%%%%%%%%%%%%%%%%%%%%%%%%%%%%%%%%%%%%%%%%%%%%%%%%%%

We now state and prove our theorem.
\begin{teo}\label{conjecture l=5}
%Let $p(z)$ and $\widehat{p}(z)$ be the polynomials defined above by equations (\ref{p}) and (\ref{widep}). If the polynomial $(p+\widehat{p})(z)$ satisfy the conditions in Lemma \ref{p(x)=p(1-x)} modulo $q$ for a prime $q \equiv 1 \pmod{5}$ and $\#\mathcal{H}_{1,z}(\F_{q^2})=\#\mathcal{H}_{2,z}(\F_{q^2})$, then 
Conjecture \ref{conject} holds for $l=5$ over $\F_{q}$, for a prime $q \equiv 1 \pmod{5}$.

\vspace{0.3cm}

\begin{proof}[Proof of Theorem \ref{conjecture l=5}]
 By the same argument done in the proof of Conjecture \ref{conject} for $l=3$, it is enough to prove the conjecture for the curve $\mathcal{C}_{z}:y^5=t(1-t)^4(1-zt)$.
Also, by the previous argument, then the L-polynomial of $\mathcal{C}_{z}$ is a perfect square, i.e., we can assume, after rearranging terms if necessary, that $a_{1,q}(z)=a_{4,q}(z)$ and $a_{2,q}(z)=a_{3,q}(z)$. We can write then 
$$Z(\mathcal{C}_{z}/\F_{q};T)= \frac{(1-a_{1}(z)T+qT^2)^2(1-a_{2}(z)T+qT^2)^2}{(1-T)(1-qT)}$$

\vspace{0.3cm}
\noindent By Corollary \ref{conjugate_hpgf} in section \ref{preliminaries}, we know that $F_{1,q}(z)=F_{4,q}(z)$ and $F_{2,q}(z)=F_{3,q}(z)$. 
%Also, since the zeta function of $\mathcal{C}_{z}$ is a perfect square, we can assume, after rearranging terms if necessary, that $a_{1}(z)=a_{4}(z)$ and $a_{2}(z)=a_{3}(z)$. 
At the end, we get that
\begin{equation}\label{a1+a2}
-(a_{1,q}(z)+a_{2,q}(z))=F_{1,q}(z)+F_{2,q}(z).
 \end{equation}
We want to prove that
$-a_{1,q}(z)=F_{1,q}(z)$ and $-a_{2,q}(z)=F_{2,q}(z)$. Recall, from (\ref{formulapuntos}) that
\begin{equation}\label{a1^2+a2^2}
-(a_{1,q}(z)^2-2q+a_{2,q}(z)^2-2q)=F_{1,q^2}(z)+F_{2,q^2}(z).
\end{equation}
Also, keep in mind that for the hypergeometric functions $F_{1,q}$ and $F_{2,q}$ we are choosing a character $\eta_{q} \in \fqmc$ of order $5$, and for the hypergeometric functions $F_{1,q^2}$ and $F_{2,q^2}$ the character we are choosing is in $\widehat{\F_{q^2}^{\times}}$, also of order $5$.

%%%%%%%%%%%%%%%%%%%%%%%%%%%%%%%%%%%%%%%%%%%%%%
%% equation between hpgf different fields%%%%%
%%%%%%%%%%%%%%%%%%%%%%%%%%%%%%%%%%%%%%%%%%%%%%

\begin{claim}
 It is enough to show that
\begin{equation}\label{relation_hypg_functions}
F_{i,q^2}(z)=-F_{i,q}(z)^2+2q
\end{equation}
for $ i=1,2$.

\begin{proof}[Proof of Claim]

We will write $a_{i,q}:=a_{i,q}(z)$ and $F_{i,q^k}:=F_{i,q^k}(z)$ for $i,k=1,2$ . 
If (\ref{relation_hypg_functions}) is true, from (\ref{a1+a2}) and (\ref{a1^2+a2^2}) we get the system of equations in $a_{1,q}$ and $a_{2,q}$
$$\begin{cases} -a_{1,q}-a_{2,q}= F_{1,q}+F_{2,q}\\ a_{1,q}^2+a_{2,q}^2=F_{1,q}^2+F_{2,q}^2.\end{cases}$$
which is equivalent to 
$$\begin{cases} -a_{1,q}-a_{2,q}= F_{1,q}+F_{2,q}\\ a_{1,q}a_{2,q}=F_{1,q}F_{2,q}.\end{cases}$$
hence, $a_{1,q}=-F_{1,q}$ and $a_{2,q}=-F_{2,q}$.
\end{proof}
\end{claim}
\noindent Continuing with the proof of the conjecture for $l=5$, it only remains to show that (\ref{relation_hypg_functions}) holds. For that, let's start by writing explicitly the functions we have on the left and right hand side of (\ref{relation_hypg_functions}). We start with $F_{1,q}=F_{4,q}=11 \,{}_{2}F_{1} \left(
\begin{array}{ll|}
\eta_{q}, & \eta_{q} ^{4} \\
     & \varepsilon \\
\end{array} \: z \right)$. 
The other case will be the result of a similar argument.
%%%%%%%%%%%%%%%%%%%%%%%%%%%%%%%%
%%%%%equation for F_{1,q}^2%%%%%
%%%%%%%%%%%%%%%%%%%%%%%%%%%%%%%%

%\allowdisplaybreaks{
\begin{align*}
%\label{equation_F1}
 F_{1,q}^2 &= \sum_{x,y \in \F_{q}} \eta_{q}^4(xy)\,\eta_{q}((1-x)(1-y))\,\eta_{q}^4((1-zx)(1-zy))\nonumber\\
& = \sum_{s \in \F_{q}^{\times}} \eta_{q}^4(s)\sum_{x\in\F_{q}^{\times}}\eta_{q}((1-x)(1-s/x))\,\eta_{q}^4((1-zx)(1-zs/x)) & (xy=s)\nonumber\\
& = \sum_{s \in \F_{q}^{\times}} \eta_{q}^4(s)\sum_{x\in\F_{q}^{\times}}\eta_{q}(1-x-s/x+s)\,\eta_{q}^4(1-z(x+s/x)+z^2s). 
\end{align*}
%}
\vspace{0.2cm}

\noindent On the other hand, define $\chi \in \widehat{\F_{q^2}^{\times}}$ such that $\chi:=\eta_{q}\circ N_{\F_{q}}^{\F_{q^2}}$, i.e., for $\alpha \in \F_{q^2}$, $\chi(\alpha)=\eta_{q}(N_{\F_{q}}^{\F_{q^2}}(\alpha))=\eta_{q}(\alpha^{q+1})$, where $N_{\F_{q}}^{\F_{q^2}}$ denotes the norm from $\F_{q^2}$ down to $\F_{q}$. Since $N_{\F_{q}}^{\F_{q^2}}(\alpha) \in \F_{q}$ for all $\alpha \in \F_{q^2}$ then $\chi$ is well defined and it actually defines a character of $\F_{q^2}^{\times}$ (see \cite{IR90} Chapter 11).. 
Moreover, since the order of $\eta_{q}$ is $5$ then the order of $\chi$ must divide $5$.
But if $x \in \fq$ then $N_{\F_{q}}^{\F_{q^2}}(x)=x^{q+1}=x^2$, therefore $\chi|_{\F_{q}}=\eta_{q}^2\neq \varepsilon$. Then $\chi 
\in \widehat{\F_{q^2}^{\times}}$ is a character of order $5$. We choose this character for our computations and we have

%%%%%%%%%%%%%%%%%%%%%%%%%%%%%%%%%
%%%%equation for F_{1,q^2}%%%%%%%
%%%%%%%%%%%%%%%%%%%%%%%%%%%%%%%%%

\begin{align*}
%\label{equation_G1}
 F_{1,q^2} &:= \sum_{c \in \F_{q^2}} \chi^4(c)\chi(1-c)\chi^4(1-zc)\nonumber\\
&= \sum_{c \in \F_{q^2}} \eta_{q}^4(c^{q+1})\eta_{q}((1-c)^{q+1})\eta_{q}^4((1-zc)^{q+1}) \nonumber \\
% & (\chi=\eta_{q}\circ N_{\F_{q}}^{\F_{q^2}})\\
& = \sum_{s \in \F_{q}^{\times}} \eta_{q}^4(s)
\sum_{\alpha \in\F_{q^2}^{\times}, \,\, \alpha^{q+1}=s}\eta_{q}(1-\alpha-s/\alpha +s)\eta_{q}^4(1-z(\alpha+s/\alpha)+z^2s)  %(c^{q+1}=s)
\end{align*}
where the last equality follows by putting $c^{q+1}=s$ and noting that, since char$(\F_{q})=q$  and $\alpha^{q+1}=s$ then
$$(1-\alpha)^{q+1}=(1-\alpha)^q(1-\alpha)=(1-\alpha ^q)(1-\alpha)=1-\alpha-\alpha^{q}+\alpha^{q+1}=1-\alpha-s/\alpha +s.$$
A similar computation gives that
$$(1-zc)^{q+1}=1-z(\alpha+s/\alpha)+z^2s.$$

\noindent For $s \in \F_{q}^{\times}$ define $h:\F_{q^2}^{\times}\to \F_{q^2}$ such that $h(t)=t+s/t$ and let $f$ and $g$ be the restrictions of $h$ to the sets $\F_{q}^{\times}$ and $N^{-1}(s):=\{\alpha \in \F_{q^2}: \alpha^{q+1}=s\}\subset \F_{q^2}^{\times}$ respectively, i.e.,
$$f:=h|_{\F_{q}^{\times}}:\F_{q}^{\times} \to \F_{q}$$
$$g:=h|_{N^{-1}(s)}:N^{-1}(s) \to \F_{q}$$
Notice that, if $\alpha \in N^{-1}(s)$ then $g(\alpha)=\alpha + s/\alpha=\alpha + \alpha^{q}=\text{tr}(\alpha) \in \F_{q}$, hence Im$(g)\subset \F_{q}$. Making use of these functions, we can rewrite
\begin{equation*}
 F_{1,q}^2 = \sum_{s \in \F_{q}^{\times}} \eta_{q}^4(s)\sum_{b \in \text{Im}(f)\subset \F_{q}} \eta_{q}(1-b+s)\eta_{q}^4(1-bz+z^2s)
\end{equation*}
and
\begin{equation*}
 F_{1,q^2}=\sum_{s \in \F_{q}^{\times}} \eta_{q}^4(s) \sum_{b \in \text{Im}(g)\subset \F_{q}} \eta_{q}(1-b+s)\eta_{q}^4(1-bz+z^2s) 
\end{equation*}

\noindent Combining both equations, we have
\begin{equation}\label{equation_with_b}
 F_{1,q}^2+F_{1,q^2}=\sum_{s \in \F_{q}^{\times}} \eta_{q}^{4}(s) \sum_{\text{some} \,\,b\in \F_{q}} \eta_{q}(1-b+s) 
\eta_{q}^{4}(1-bz+z^2s).
\end{equation}

\noindent Our next and last step will be to describe over what elements are we summing in the inner sum of (\ref{equation_with_b}). Fix $s \in \F_{q}^{\times}$. Note that $h$ is generically a 2-to-1 map. To see this, suppose $b \in \text{Im}(h)$, therefore there exists $t\in \F_{q^2}^{\times}$ such that $t+s/t=b$, or equivalently $t^2-bt+s=0$. Hence, $h$ is 2-to-1 except when $b^2-4s=0$, i.e., except when $s$ is a perfect square in $\F_{q}$.

\begin{itemize}
 \item  Case 1: $s$ is not a perfect square in $\F_{q}^{\times}$.\\
By previous comment, we know that in this case $h$ is 2-to-1 map. Also, is not too hard to show that $h$ is surjective when restricted to the two domains $\F_{q}^{\times}$ and $N^{-1}(s)$. Therefore, in this case every element $b \in \F_{q}^{\times}$ will appear exactly twice in the inner sum of (\ref{equation_with_b}).

\item Case 2: $s$ is a perfect square in $\F_{q}^{\times}$.\\
In this case, let $s=a^2$, then $b=2a$ or $b=-2a$. As in previous case, every $b \in \F_{q}$ different from $2a$ and $-2a$ will appear exactly twice in the inner sum of (\ref{equation_with_b}).
% since $\text{Im}(f)\cup \text{Im}(g)=\F_{q}$. 
What about $b=2a$ and $b=-2a$? If $s$ is a perfect square then $\text{Im}(f)\cap\text{Im}(g)=\{2a,-2a\}$, hence both $2a$ and $-2a$ will also appear twice in the sum, once as part of the sum for $F_{1,q}^2$ and once as part of the sum for $F_{1,q^2}$.
\end{itemize}

\noindent Summarizing we have
\begin{align*}
 F_{1,q}^2+F_{1,q^2} &= \sum_{\substack{s \in \F_{q}^{\times}\\ \left(\frac{s}{q}\right)=-1}} \eta_{q}^{4}(s) \sum_{\text{some}\,\,b\in \F_{q}} \eta_{q}(1-b+s) \eta_{q}^{4}(1-bz+z^2s)\\
& \qquad \qquad + \sum_{\substack{s \in \F_{q}^{\times}\\  \left(\frac{s}{q}\right)=1}} \eta_{q}^{4}(s) \sum_{\text{some}\,\,b\in \F_{q}} \eta_{q}(1-b+s) \eta_{q}^{4}(1-bz+z^2s)\\
& = 2\sum_{\substack{s \in \F_{q}^{\times}\\ \left(\frac{s}{q}\right)=-1}} \eta_{q}^{4}(s) \sum_{b\in \F_{q}} \eta_{q}(1-b+s) \eta_{q}^{4}(1-bz+z^2s).\\
& \qquad \qquad + 2\sum_{\substack{s \in \F_{q}^{\times}\\  \left(\frac{s}{q}\right)=1}} \eta_{q}^{4}(s) \sum_{b\in \F_{q}} \eta_{q}(1-b+s) \eta_{q}^{4}(1-bz+z^2s)\\
&= 2\sum_{s \in \F_{q}^{\times}} \eta_{q}^{4}(s) \sum_{b\in \F_{q}} \eta_{q}(1-b+s) \eta_{q}^{4}(1-bz+z^2s).
\end{align*}

\noindent To finish the proof we need to see that 
$$\sum_{s \in \F_{q}^{\times}} \eta_{q}^{4}(s) \sum_{b\in \F_{q}} \eta_{q}(1-b+s) \eta_{q}^{4}(1-bz+z^2s)=q.$$

\noindent We begin by rewriting the inner sum in the above formula, but first recall that the action of $GL_{2}(\F_{q})$ on $\F_{q}$ given by 
\begin{displaymath}
 \left( \begin{array}{cc}
a & b\\
c & d
\end{array}
\right)\cdot w:=\frac{a w +b}{c w+d}
\end{displaymath}
defines an automorphism of $\mathbb{P}^{1}(\F_{q})$. Now, since $\eta_{q}^5=\varepsilon$ and $\eta_{q}(0)=0$ we get
\begin{align*}
\sum_{b\in \F_{q}} \eta_{q}(1-b+s) \eta_{q}^{4}(1-bz+z^2s) &= \sum_{\substack{b \in \F_{q}\\ b\neq (z^{-1}+zs)}} 
         \eta_{q}\left(\frac{1-b+s}{1-bz+z^2s}\right)\\
&= \sum_{\substack{b \in \F_{q}\\ b\neq (z^{-1}+zs)}} 
         \eta_{q}(\gamma \cdot b)
\end{align*}
where $\gamma:=\left( \begin{array}{cc}
-1 & s+1\\
-z & z^2s+1
\end{array}
\right)$. Now, $\text{det}\gamma=(z-1)(1-sz)$, therefore, since $z\neq 1$ we see that as long as $s\neq z^{-1}$, $\gamma$ defines an automorphism of $\mathbb{P}^{1}(\F_{q})$. Then, by separating the sums according to whether $s=z^{-1}$ or not, we have:

\begin{align*}
 \sum_{s \in \F_{q}^{\times}} \eta_{q}^{4}(s) \sum_{\substack{b\in \F_{q}\\ b\neq (z^{-1}+zs)}} \eta_{q}(\gamma\cdot b) &=
  \sum_{\substack{s\in\F_{q}^{\times}\\ s \neq z^{-1}}}  \eta_{q}^{4}(s) \sum_{\substack{b\in \F_{q}\\ b\neq (z^{-1}+zs)}} \eta_{q}(\gamma \cdot b) \\ &\qquad \qquad + \eta^{4}_{q}(z^{-1})\sum_{\substack{b\in \F_{q}\\ b\neq (z^{-1}+1)}} \eta_{q}\left(\frac{1-b+z^{-1}}{1-bz+z}\right) \\
&= A+B
\end{align*}
where $A$ and $B$ are set to be the two sums appearing in the previous line. We now compute $A$ and $B$. First we have
\begin{align*}
 B &= \sum_{\substack{b\in \F_{q}\\ b\neq (z^{-1}+1)}} \eta^{4}_{q}(z^{-1}) \eta_{q}\left(\frac{1-b+z^{-1}}{1-bz+z}\right) \\
& = \sum_{\substack{b\in \F_{q}\\ b\neq (z^{-1}+1)}} \eta_{q}\left(\frac{z-bz+1}{1-bz+z}\right) & (\eta_{q}^{4}(z^{-1})=\eta_{q}(z))\\
& = \sum_{\substack{b\in \F_{q}\\ b\neq (z^{-1}+1)}} 1 \\
& = q-1.
\end{align*}

\noindent Now we compute $A$. Since in this case the action of $\gamma$ defines an automorphism of $\mathbb{P}^{1}(\F_{q})$, and since $\gamma \cdot b$ runs over $\F_{q}- \{z^{-1}\}$ as $b$ runs over $\F_{q}-\{z^{-1}+sz\}$ we see that
%\allowdisplaybreaks{
\begin{align*}
 A & =  \sum_{\substack{s \in \F_{q}^{\times}\\ s\neq z^{-1}}} \eta_{q}^{4}(s) \sum_{\substack{u\in \F_{q}\\ u\neq z^{-1}}} \eta_{q}(u) \\
& = (-\eta_{q}^{4}(z^{-1})) (-\eta_{q}(z^{-1})) & \text{(orthogonality relations for characters)} \\
& = 1 & (\eta_{q}^{5}=\varepsilon)
\end{align*}
%}
Therefore, combining our calculations for $A$ and $B$ we see that

\begin{equation}
 F_{1,q}^2+F_{1,q^2} = 2(A+B)= 2q
\end{equation}
finishing the proof.
\end{proof}
\end{teo}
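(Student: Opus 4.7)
The plan is to reduce, by Corollary \ref{curves_have_same_number_points}, to the single model $\mathcal{C}_{z}:y^5=t(1-t)^4(1-zt)$ of genus $g=4$, then exploit the splitting of its Jacobian established in the paragraphs above and the elementary symmetries of hypergeometric functions from Corollary \ref{conjugate_hpgf} to match the hypergeometric sum with the trace sum, finally closing the gap by a genuine character-sum computation over the quadratic extension $\F_{q^2}$.

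From the discussion preceding the theorem I may assume that the $L$-polynomial of $\mathcal{C}_z/\F_q$ is a perfect square, so after a suitable pairing of reciprocal roots
\[
Z(\mathcal{C}_z/\F_q;T)=\frac{(1-a_{1,q}(z)T+qT^2)^2(1-a_{2,q}(z)T+qT^2)^2}{(1-T)(1-qT)}.
\]
Corollary \ref{conjugate_hpgf}, together with the fact that $\eta_q(-1)=1$ (since $l=5$ is odd), gives the identifications $F_{1,q}(z)=F_{4,q}(z)$ and $F_{2,q}(z)=F_{3,q}(z)$. Equating (\ref{formulapuntos}) with (\ref{puntoscurva}) then yields the first relation
\[
-(a_{1,q}(z)+a_{2,q}(z))=F_{1,q}(z)+F_{2,q}(z).
\]

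This equation alone only determines the sum $a_{1,q}+a_{2,q}$. To disentangle the two traces, the plan is to apply Corollary \ref{generalthm} over $\F_{q^2}$ to obtain the second relation
\[
-\bigl(a_{1,q}(z)^2+a_{2,q}(z)^2-4q\bigr)=F_{1,q^2}(z)+F_{2,q^2}(z),
\]
with the right-hand side formed from any character of $\widehat{\F_{q^2}^{\times}}$ of order $5$. The crux of the argument is then to prove the identity
\[
F_{i,q^2}(z)=-F_{i,q}(z)^2+2q\qquad(i=1,2).
\]
Granting this, subtracting the square of the first relation from the second eliminates the cross term and produces $a_{1,q}a_{2,q}=F_{1,q}F_{2,q}$; Vieta's formulas applied to the resulting quadratic then force $a_{i,q}(z)=-F_{i,q}(z)$ (up to a relabelling), which is exactly the conjecture.

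The main obstacle is this identity, which is not formal but genuinely requires a character-sum manipulation. The right character to use is $\chi:=\eta_q\circ N_{\F_q}^{\F_{q^2}}\in\widehat{\F_{q^2}^{\times}}$, which has order $5$ because $\chi|_{\F_q}=\eta_q^{2}\neq\varepsilon$. Expanding the defining sum for $F_{i,q^2}(z)$ over $c\in\F_{q^2}$, substituting $s:=c^{q+1}\in\F_q^{\times}$, and using $(1-c)^{q+1}=1-(c+c^q)+s$ together with the analogous identity for $(1-zc)^{q+1}$, converts it into a sum over $s\in\F_q^{\times}$ whose inner summation is indexed by $b:=c+c^q$ running through the image of the map $t\mapsto t+s/t$ on $N^{-1}(s)$. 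An analogous substitution $s:=xy$ in the double sum $F_{i,q}(z)^2$ reindexes it by $b:=x+s/x$ with $x\in\F_q^{\times}$. Because both maps $t\mapsto t+s/t$ are generically $2$-to-$1$, the two image sets together cover every $b\in\F_q$ exactly twice, so adding $F_{i,q}(z)^2+F_{i,q^2}(z)$ yields $2\sum_{s\in\F_q^{\times}}\eta_q^{4}(s)\sum_{b\in\F_q}\eta_q(1-b+s)\eta_q^{4}(1-bz+z^2s)$. The inner $b$-sum then becomes $\sum\eta_q(\gamma\cdot b)$ for the Möbius transformation $\gamma=\bigl(\begin{smallmatrix}-1 & s+1\\ -z & z^2s+1\end{smallmatrix}\bigr)$, which is invertible on $\mathbb{P}^1(\F_q)$ precisely when $s\neq z^{-1}$, so orthogonality collapses almost every term. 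The real bookkeeping lies in handling the degenerate cases ($s=z^{-1}$ and $s$ a nonzero square in $\F_q$), and in checking that the residual contributions combine to exactly $2q$.
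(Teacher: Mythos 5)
Your proposal is correct and follows essentially the same route as the paper: the same reduction to $\mathcal{C}_z:y^5=t(1-t)^4(1-zt)$, the same use of the squared $L$-polynomial and of $F_{1,q}=F_{4,q}$, $F_{2,q}=F_{3,q}$, the same key identity $F_{i,q^2}=-F_{i,q}^2+2q$ proved via the norm character $\chi=\eta_q\circ N_{\F_q}^{\F_{q^2}}$, the maps $t\mapsto t+s/t$ on $\F_q^{\times}$ and $N^{-1}(s)$, and the M\"obius-transformation/orthogonality computation of the residual terms. The only difference is that you leave the final bookkeeping (the contributions $A=1$ and $B=q-1$ for $s\neq z^{-1}$ and $s=z^{-1}$) as a sketch, which the paper carries out explicitly.
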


\begin{example}\label{example}
We illustrate with an example the result of the conjecture. Consider the smooth projective curve with affine model given by
$$\mathcal{C}_{3}: y^5=t^2(1-t)^3(1-3t)^2$$
over the finite field $\F_{11}$. $\mathcal{C}_{3}$ is a hyperelliptic curve of genus $4$, and using Magma we can compute its zeta function. We have that
$$Z(\mathcal{C}_{3}/\F_{11},T)= \frac{(121T^4+66T^3+26T^2+6T+1)^2}{(1-T)(1-11T)}.$$

\noindent Therefore, after doing some algebra, we find the values of $a_{i,11}(3)$ for $i=1,\dots,4$.
Specifically, if $\zeta_{5}:=e^{2\pi i/5}$ we have
\begin{align*}
 a_{1,11}(3)= a_{4,11}(3)&=-4-2\zeta_{5}^2-2\zeta_{5}^3\\
 a_{2,11}(3)=a_{3,11}(3)&=-2+2\zeta_{5}^2+2\zeta_{5}^3.
\end{align*}

\vspace{0.3cm}

\noindent On the other hand, consider the multiplicative character $\eta_{11} \in \widehat{\F_{11}^{\times}}$ defined by $\eta_{11}(a):=\zeta_{5}$, where $a$ is a primitive element of $\F_{11}^{\times}$, i.e., $a$ generates $\F_{11}^{\times}$, and recall that 
$F_{i,11}(3)= 11\,{}_{2}F_{1}[\eta_{11} ^{3i},\eta_{11} ^{2i};\varepsilon|3]$.
Using Magma we get
\begin{align*}
 F_{1,11}(3)=F_{4,11}(3)&=4+2\zeta_{5}^2+2\zeta_{5}^3\\
F_{2,11}(3)=F_{3,11}(3)&=2-2\zeta_{5}^2-2\zeta_{5}^3.
\end{align*}

\noindent Hence $$F_{i,11}(3)=-a_{i,11}(3)\,\,\,\,\, \text{for all}\,\, i=1,2,3,4.$$

\end{example}
\vspace{0.3cm}

\section{Advances toward the general case}\label{generalconj}
\vspace{0.4cm}

Even though it is still work in progress to prove the conjecture in its full generality, some advances have already been made toward it. To show these advances is the purpose of this section. 

\vspace{0.3cm}

Suppose now that $l$ and $q$ are odd primes, with $q \equiv 1 \pmod{l}$, and let $z \in \F_{q}\backslash\{0,1\}$. Recall that our conjecture relates values of certain hypergeometric functions over $\F_{q}$ to counting points on certain curves over $\F_{q}$. Recall also, that the curves we are interested in are smooth projective curves of genus $l-1$ with affine model
$$\mathcal{C}_{z}^{(m,s)}: y^l=t^m(1-t)^s(1-zt)^m$$
where $1\leq m,s<l$ are integers such that $m+s=l$.
Now, as we mentioned in the previous section, Corollary \ref{curves_have_same_number_points} in section \ref{hgf and ac} states that the curves $\mathcal{C}_{z}^{(m,s)}$ have all the same number of points over every finite extension of $\F_{q}$ as $(m,s)$ varies over all pairs of positive integers with $m+s=l$, hence they all have the same zeta function over $\F_{q}$. This, together with the fact that the hypergeometric functions that appear on the right hand side of equation (\ref{formula}) are the same for all these curves imply that it is enough to prove the conjecture for only one of them, say
\begin{equation}\label{generic_curve}
\mathcal{C}_{z}^{1,l-1}: y^l=t(1-t)^{l-1}(1-zt).
\end{equation}
Throughout this section, we will denote this curve by $\mathcal{C}_{z}$.

\vspace{0.3cm}

So, the question is: what results would be enough to know in order to prove the conjecture for all primes $l$ and $q$ with $q \equiv 1 \pmod{l}$?
Recall that, by equations (\ref{formulapuntos}) and (\ref{puntoscurva}) we have that 
\begin{equation}\label{F_and_alpha}
 F_{1,q^n}(z)+F_{2,q^n}(z)+ \cdots +F_{l-1,q^n}(z) = -\sum_{i=1}^{l-1}(\alpha_{i,q}^n(z)+\overline{\alpha_{i,q}^n}(z))
\end{equation}
where $F_{i,q^n}(z)= q^n \,{}_{2}F_{1} \left(
\begin{array}{ll|}
\eta_{q^n} ^{i}, & \eta_{q^n} ^{i(l-1)} \\
     & \varepsilon \\
\end{array} \: z \right)$ with $\eta_{q^n} \in \widehat{\F_{q^n}^{\times}}$ a character of order $l$, 
and $\alpha_{i,q}(z)$ are the reciprocals of the roots of the zeta function of $\mathcal{C}_{z}$ over $\F_{q}$, i.e.,
$$Z(\mathcal{C}_{z}/\fq;T)=\frac{(1-\alpha_{1,q}(z)T)(1-\overline{\alpha_{1,q}(z)}T)\cdots(1-\alpha_{l-1,q}(z)T)
(1-\overline{\alpha_{l-1,q}(z)}T)}{(1-T)(1-qT)}.$$

From now on we will omit the dependency on $z$ of the hypergeometric functions and the roots of the zeta function, therefore, we will denote
$F_{i,q^n}:=F_{i,q^n}(z)$ and $\alpha_{i,q}:=\alpha_{i,q}(z)$. Also, as in the previous section, denote $a_{i,q}:=\alpha_{i,q}+\overline{\alpha_{i,q}}$,  for $i=1,\cdots, l-1$. Since we want to relate the hypergeometric functions above with the values $a_{i,q}$, first we are going to express the values $\alpha_{i,q}^n+\overline{\alpha_{i,q}^n}$ in terms of $a_{i,q}$ and $q$. We have the following theorem:

\begin{lemma}\label{Dickson}
For $\alpha \in \C$ such that $|\alpha|=\sqrt{q}$ denote $\alpha+\overline{\alpha}:=a$, and let $n$ be a non-negative integer. Then:
\begin{equation}
 \alpha^n+\overline{\alpha}^n=\sum_{i=0}^{\lfloor \frac{n}{2}\rfloor} (-1)^i\, T(n,i)\,q^i\, a^{n-2i}
\end{equation}
where  $T(0,0):=2$, $T(n,0):=1$ for $n>0$ and 
$$T(n,i):= \frac{n(n-i-1)!}{i!(n-2i)!},\,\,\,\, \text{for}\,\, n>0, \, i\geq 0.$$

\begin{proof}
 We will prove the result by induction on $n$.
For $n=0$ and $n=1$ is clear.

\noindent Now suppose the result is true for all $k\leq n$. We want to show then that is also true for $n+1$.
Notice that 
\begin{align*}
 (\alpha^n+\overline{\alpha}^n)(\alpha+\overline{\alpha}) &= \alpha^{n+1}+\overline{\alpha}^{n+1}+\alpha^n \overline{\alpha}+\overline{\alpha}^n \alpha\\
& = \alpha^{n+1}+\overline{\alpha}^{n+1}+\alpha \overline{\alpha} (\alpha^{n-1}+\overline{\alpha}^{n-1})\\
& =  \alpha^{n+1}+\overline{\alpha}^{n+1}+q (\alpha^{n-1}+\overline{\alpha}^{n-1}) & (\alpha \overline{\alpha}=q)
\end{align*}
Hence, since $a=\alpha+\overline{\alpha}$, we have  
\begin{equation}\label{powers_alpha}
  \alpha^{n+1}+\overline{\alpha}^{n+1}= (\alpha^n+\overline{\alpha}^n)\,a-q\,(\alpha^{n-1}+\overline{\alpha}^{n-1}).
\end{equation}

\noindent Combining equation (\ref{powers_alpha}) and the inductive hypothesis we have
\begin{align}\label{first_sum}
 \alpha^{n+1}+\overline{\alpha}^{n+1} &= \sum_{i=0}^{\lfloor\frac{n}{2}\rfloor}(-1)^i\, T(n,i)\,q^i\, a^{n+1-2i}-
                          \sum_{i=0}^{\lfloor\frac{n-1}{2}\rfloor}(-1)^i\, T(n-1,i)\,q^{i+1}\, a^{n-1-2i}\nonumber \\
& = a^{n+1}+\sum_{i=1}^{\lfloor\frac{n}{2}\rfloor}(-1)^i\, T(n,i)\,q^i\, a^{n+1-2i} \nonumber\\ 
& \qquad \qquad -\sum_{j=1}^{\lfloor\frac{n-1}{2}\rfloor+1}(-1)^{j-1}\, T(n-1,j-1)\,q^{j}\, a^{n+1-2j}
\end{align}
after breaking apart the $i=0$ contribution in the first sum, and making the change of variables $i+1=j$ in the second sum.

\noindent Now we separate in two cases.
\begin{itemize}
 \item  Case 1: $n$ is even.\\
Notice that, in this case we have that $\lfloor \frac{n}{2}\rfloor=\lfloor \frac{n-1}{2}\rfloor+1$. Then, equation (\ref{first_sum}) becomes
\allowdisplaybreaks{
\begin{align*}
 \alpha^{n+1}+\overline{\alpha}^{n+1}&= a^{n+1}+\sum_{i=1}^{\lfloor\frac{n}{2}\rfloor} (-1)^i\, 
\left(\frac{n}{i!(n-2i)!}+\frac{(n-1)}{(i-1)!(n+1-2i)!}\right)\\
& \qquad \qquad \qquad \cdot (n-1-i)!\,q^i\,a^{n+1-2i}\\
&= a^{n+1}+\sum_{i=1}^{\lfloor\frac{n}{2}\rfloor} (-1)^i\, T(n+1,i)\,q^i\,a^{n+1-2i}\\
&= \sum_{i=0}^{\lfloor\frac{n+1}{2}\rfloor} (-1)^i\, T(n+1,i)\,q^i\,a^{n+1-2i}
\end{align*} 
}after replacing $T(n,k)$ by its definition, doing some algebra, and noticing that, if $n$ is even then $\lfloor\frac{n}{2}\rfloor=\lfloor\frac{n+1}{2}\rfloor$. This proves the lemma for $n$ even.

\item Case 2: $n$ is odd.\\
In this case we have $\lfloor\frac{n}{2}\rfloor=\lfloor\frac{n-1}{2}\rfloor$ and $\lfloor\frac{n+1}{2}\rfloor=\lfloor\frac{n}{2}\rfloor+1$. Combining these, breaking apart the contribution of $i=\lfloor\frac{n}{2}\rfloor+1$ in the second sum, and using the previous computation, equation (\ref{first_sum}) becomes
\begin{align*}
 \alpha^{n+1}+\overline{\alpha}^{n+1}&= a^{n+1}+ \sum_{i=1}^{\lfloor\frac{n+1}{2}\rfloor-1} (-1)^i\, T(n+1,i)\,q^i\,a^{n+1-2i}\\
& \qquad  +(-1)^{\lfloor\frac{n+1}{2}\rfloor} \frac{(n-1)(n-1-\lfloor\frac{n+1}{2}\rfloor)!}{(\lfloor\frac{n+1}{2}\rfloor)-1)!(n+1-2\lfloor\frac{n+1}{2}\rfloor)!}\,
q^{\lfloor\frac{n+1}{2}\rfloor}\,a^{n+1-2\lfloor\frac{n+1}{2}\rfloor}.
\end{align*}
To finish the proof, we need to see that 
\begin{equation}\label{dickson_coeff}
\frac{(n-1)(n-1-\lfloor\frac{n+1}{2}\rfloor)!}{(\lfloor\frac{n+1}{2}\rfloor)-1)!(n+1-2\lfloor\frac{n+1}{2}\rfloor)!}
= T(n+1,\lfloor\frac{n+1}{2}\rfloor).
\end{equation}
This is not a hard computation. Write $n=2m+1$ for some $m \in \N$, then $\lfloor\frac{n+1}{2}\rfloor=m+1$. Substituting this in equation (\ref{dickson_coeff}) we get $2=2$ finishing the proof for $n$ odd.
\end{itemize}
\end{proof}
\end{lemma}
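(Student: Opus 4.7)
The natural approach is induction on $n$, which reduces the problem to a polynomial identity on the coefficients $T(n,i)$.

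The base cases $n=0$ and $n=1$ can be verified directly: $\alpha^{0}+\overline{\alpha}^{0}=2=T(0,0)$, and $\alpha+\overline{\alpha}=a=T(1,0)a$. For the inductive step, I would exploit that $\alpha$ and $\overline{\alpha}$ are the roots of $X^{2}-aX+q=0$, since $\alpha+\overline{\alpha}=a$ and $\alpha\overline{\alpha}=|\alpha|^{2}=q$. Setting $s_{n}:=\alpha^{n}+\overline{\alpha}^{n}$, this algebraic identity immediately yields the Newton-type recurrence
\begin{equation*}
s_{n+1}=a\,s_{n}-q\,s_{n-1},
\end{equation*}
which is the engine of the induction.

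Next, I would substitute the inductive hypothesis for $s_{n}$ and $s_{n-1}$ into the recurrence, reindex the second sum by $j=i+1$ so that both sums are written in powers of $q^{i}a^{n+1-2i}$, and compare coefficients with the conjectured expression for $s_{n+1}$. The whole calculation then reduces to verifying the Pascal-like identity
\begin{equation*}
T(n+1,i)=T(n,i)+T(n-1,i-1)
\end{equation*}
for the interior indices $1\le i\le \lfloor (n-1)/2\rfloor$. This is a short computation: after putting the two fractions over the common denominator $i!(n+1-2i)!$, the numerator simplifies to $(n+1)(n-i)(n-i-1)!$, which is exactly the numerator of $T(n+1,i)$.

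The subtle point, and the step I would expect to take the most care, is handling the boundary indices, where the ranges of the two sums in the recurrence do not line up. This is where parity of $n$ matters. For $n$ even one has $\lfloor n/2\rfloor=\lfloor (n-1)/2\rfloor+1=\lfloor (n+1)/2\rfloor$, so the $a^{n+1}$ term (coming from $as_{n}$ with $i=0$) and the terms with $1\le i\le \lfloor n/2\rfloor$ exactly reproduce $s_{n+1}$. For $n$ odd one has $\lfloor (n+1)/2\rfloor=\lfloor n/2\rfloor+1$, so the top index $i=\lfloor (n+1)/2\rfloor$ comes entirely from the $-qs_{n-1}$ term, and one must verify by hand that its coefficient equals $T(n+1,\lfloor (n+1)/2\rfloor)$; writing $n=2m+1$, both sides collapse to the value $2$, closing the induction. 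No other obstacle arises, since each piece is an elementary manipulation of factorials.
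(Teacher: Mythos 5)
Your proposal is correct and follows essentially the same route as the paper: induction via the recurrence $s_{n+1}=a\,s_n-q\,s_{n-1}$, reindexing the $q\,s_{n-1}$ sum, and a parity split to handle the top index, with your Pascal-type identity $T(n+1,i)=T(n,i)+T(n-1,i-1)$ being exactly the factorial manipulation the paper performs termwise. The only nitpick is that for $n$ even this identity is in fact needed up through $i=\lfloor n/2\rfloor$ rather than only $i\le\lfloor (n-1)/2\rfloor$, but your computation covers that index as well, so nothing breaks.
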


Now, equation (\ref{F_and_alpha}) and Lemma \ref{Dickson} allow us to relate explicitly the hypergeometric functions with the traces of Frobenius, giving
\begin{equation}\label{relation_hpgf_traces}
 F_{1,q^n}+F_{2,q^n}+ \cdots +F_{l-1,q^n} =-\sum_{i=1}^{l-1} \sum_{j=0}^{\lfloor \frac{n}{2}\rfloor} (-1)^j\, T(n,j)\,q^j\, a_{i,q}^{n-2j}.
\end{equation}
Since in Conjecture \ref{conject} we want to prove that $F_{i,q}=-a_{i,q}$ for all $i=1,\ldots,l-1$, then we have the following result.

%%%%%%%%%%%%%%%%%%%%%%%%%%%%%%%%%%%%%%%%%%%%%%%%
%% theorem: what is enough to prove conjecture%%
%%%%%%%%%%%%%%%%%%%%%%%%%%%%%%%%%%%%%%%%%%%%%%%%

\begin{prop}\label{relation_hypergeometric_functions}
If for all $i,n=1,\ldots,l-1$ we have that
\begin{equation}\label{relation_powers_hpgf}
 F_{i,q^n}=(-1)^{n+1}\sum_{j=0}^{\lfloor \frac{n}{2}\rfloor} (-1)^j\, T(n,j)\,q^j\, F_{i,q}^{n-2j}
\end{equation}
then Conjecture \ref{conject} is true.
\begin{proof}
 Assume equation (\ref{relation_powers_hpgf}) is true. Then, substituting into equation (\ref{relation_hpgf_traces}) for $n=1,\ldots,l-1$ we get a system of equations relating sums of the hypergeometric functions $F_{i,q}$ and their powers to sums of the traces of Frobenius $a_{i,q}$ and their powers. After simplifying this system of equations, we get an equivalent one of the form

\begin{equation}\label{system1}
 \begin{cases}
F_{1,q}+F_{2,q}+\cdots+F_{l-1,q}=-(a_{1,q}+a_{2,q}+\cdots+a_{l-1,q})\\
F_{1,q}^2+F_{2,q}^2+\cdots+F_{l-1,q}^2=a_{1,q}^2+a_{2,q}^2+\cdots+a_{l-1,q}^2\\
\hspace{4.5cm} \vdots \\
F_{1,q}^{n}+F_{2,q}^{n}+\cdots+F_{l-1,q}^{n}=(-1)^{n}(a_{1,q}^{n}+a_{2,q}^{n}+\cdots+a_{l-1,q}^{n})\\
\hspace{4.5cm} \vdots \\
F_{1,q}^{l-1}+F_{2,q}^{l-1}+\cdots+F_{l-1,q}^{l-1}=a_{1,q}^{l-1}+a_{2,q}^{l-1}+\cdots+a_{l-1,q}^{l-1}.
\end{cases}
\end{equation}

\vspace{0.4cm}

\noindent This fact can be seen by induction. For $n=1$ there is nothing to prove. Suppose now that 
$F_{1,q}^{k}+F_{2,q}^{k}+\cdots+F_{l-1,q}^{k}=(-1)^{k}(a_{1,q}^{k}+a_{2,q}^{k}+\ldots+a_{l-1,q}^{k})$ for all $k< n$. Now, by equation (\ref{relation_powers_hpgf}) we have
\begin{align}\label{eq1}
F_{1,q^n}+\cdots+F_{l-1,q^n}&= (-1)^{n+1}\sum_{j=0}^{\lfloor \frac{n}{2}\rfloor} (-1)^j\, T(n,j)\,q^j\, 
\left(F_{1,q}^{n-2j}+\cdots+F_{l-1,q}^{n-2j}\right)\nonumber\\
& =(-1)^{n+1}(F_{1,q}^{n}+\cdots+F_{l-1,q}^{n})\nonumber\\
& \qquad + (-1)^{n+1}\sum_{j=1}^{\lfloor \frac{n}{2}\rfloor} (-1)^j\, T(n,j)\,q^j\, 
\left(F_{1,q}^{n-2j}+\cdots+F_{l-1,q}^{n-2j}\right)\nonumber\\
& =(-1)^{n+1}(F_{1,q}^{n}+\cdots+F_{l-1,q}^{n})\nonumber\\
& \qquad +(-1)^{n+1}\sum_{j=1}^{\lfloor \frac{n}{2}\rfloor} (-1)^{n-j}\, T(n,j)\,q^j\, 
\left(a_{1,q}^{n-2j}+\cdots+a_{l-1,q}^{n-2j}\right)
\end{align}
where the last equality follows from the inductive hypothesis.

\noindent On the other hand, by breaking apart the contribution of $j=0$ in equation (\ref{relation_hpgf_traces}) we have
\begin{align}\label{eq2}
F_{1,q^n}+ \cdots +F_{l-1,q^n} &=-(a_{1,q}^n+\ldots+a_{l-1,q}^n)
+\sum_{j=1}^{\lfloor \frac{n}{2}\rfloor} (-1)^{j+1}\, T(n,j)\,q^j\, (a_{1,q}^{n-2j}+\ldots+a_{l-1,q}^{n-2j}).
\end{align}
Therefore, combining equations (\ref{eq1}) and (\ref{eq2}) and noticing that $(-1)^{2n+1-j}=(-1)^{j+1}$ we get
$$F_{1,q}^n+ \cdots +F_{l-1,q}^n =(-1)^n \left( a_{1,q}^n+\ldots+a_{l-1,q}^n\right)$$
as desired.

\noindent Next, by using the Newton-Girard formulas, which give relations between elementary symmetric polynomials and power sums, we see that the system (\ref{system1}) is equivalent to
\allowdisplaybreaks{
\begin{equation*}\label{system2}
 \begin{cases}
F_{1,q}+\cdots+F_{l-1,q}=-(a_{1,q}+a_{2,q}+\cdots+a_{l-1,q})\\
\sum_{1\leq i<j\leq l-1}F_{i,q}F_{j,q}=\sum_{1\leq i<j\leq l-1}a_{i,q}a_{j,q}\\
\hspace{3.5cm} \vdots \\
F_{1,q}\ldots F_{l-1,q}=a_{1,q}\ldots a_{l-1,q}
\end{cases}
\end{equation*}
}
\vspace{0.2cm}

\noindent i.e., the elementary symmetric polynomials in the variables $F_{1,q},\ldots, F_{l-1,q}$ equal (up to a sign) the elementary symmetric polynomials in $a_{1,q},\ldots,a_{l-1,q}$. Then, we can think of these values as being roots of the same polynomial, therefore, after rearranging terms, we have that
$$F_{i,q}=-a_{i,q}\,\,\,\,\, \text{for all}\,\, i=1,\ldots,l-1$$
and Conjecture \ref{conject} follows.
\end{proof}
\end{prop}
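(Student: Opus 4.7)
The plan is to assume the hypothesized identity (\ref{relation_powers_hpgf}) and combine it with the relation (\ref{relation_hpgf_traces}) derived from Lemma \ref{Dickson}, in order to extract, for each $n = 1, \ldots, l-1$, an equality between the $n$-th power sums of the hypergeometric values $\{F_{i,q}\}$ and the signed values $\{-a_{i,q}\}$. Once all $l-1$ power sums agree, the Newton--Girard formulas force the elementary symmetric polynomials to agree, and then the multisets $\{F_{i,q}\}$ and $\{-a_{i,q}\}$ coincide, which gives Conjecture \ref{conject} after relabeling.

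The first step is to prove by induction on $n$ that
$$\sum_{i=1}^{l-1} F_{i,q}^{\,n} \;=\; (-1)^{n}\sum_{i=1}^{l-1} a_{i,q}^{\,n}, \qquad n=1,\ldots,l-1.$$
The base case $n=1$ is exactly (\ref{relation_hpgf_traces}) with $n=1$, since then the only term on the right is $j=0$. For the inductive step, I would sum (\ref{relation_powers_hpgf}) over $i=1,\ldots,l-1$ to write $\sum_i F_{i,q^n}$ as a linear combination of the lower power sums $\sum_i F_{i,q}^{\,n-2j}$, apply the inductive hypothesis to replace each of these by $(-1)^{n-2j}\sum_i a_{i,q}^{\,n-2j}$, and then compare with the expression for $\sum_i F_{i,q^n}$ given directly by (\ref{relation_hpgf_traces}), after isolating the $j=0$ contribution. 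Using $(-1)^{n+1}(-1)^{n-2j}=(-1)^{j+1}$, the sums over $j\ge 1$ on the two sides cancel term by term, leaving the desired identity at exponent $n$.

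The second step is purely algebraic. With the power-sum equalities in hand for $n=1,\ldots,l-1$, Newton's identities express each elementary symmetric polynomial $e_k$ recursively in terms of $p_1,\ldots,p_k$, so
$$e_k(F_{1,q},\ldots,F_{l-1,q}) \;=\; e_k(-a_{1,q},\ldots,-a_{l-1,q}), \qquad k=1,\ldots,l-1.$$
Hence the monic polynomials $\prod_i(X-F_{i,q})$ and $\prod_i(X+a_{i,q})$ in $\mathbb{C}[X]$ coincide, so after a permutation of indices $F_{i,q}=-a_{i,q}$ for each $i$, which is precisely the content of Conjecture \ref{conject}.

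The main obstacle I expect is the induction step: bookkeeping the Dickson-type coefficients $T(n,j)$ and matching the sign $(-1)^{j+1}$ appearing in (\ref{relation_hpgf_traces}) against the sign $(-1)^{n+1}(-1)^{n-2j}$ coming from combining (\ref{relation_powers_hpgf}) with the inductive hypothesis. A more conceptual concern is that we have exactly $l-1$ power-sum equations to determine $l-1$ unknowns, so the argument is tight and depends crucially on (\ref{relation_powers_hpgf}) holding for all $n\le l-1$; weakening the hypothesis to fewer values of $n$ would not suffice to recover the elementary symmetric polynomials via Newton--Girard.
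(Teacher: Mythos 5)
Your proposal is correct and follows essentially the same route as the paper: an induction establishing $\sum_{i} F_{i,q}^{\,n}=(-1)^{n}\sum_{i} a_{i,q}^{\,n}$ for $n=1,\ldots,l-1$ by summing (\ref{relation_powers_hpgf}) over $i$, invoking the inductive hypothesis, and comparing with (\ref{relation_hpgf_traces}) after isolating the $j=0$ term, followed by Newton--Girard and identification of the monic polynomials $\prod_i(X-F_{i,q})$ and $\prod_i(X+a_{i,q})$. (Only a transcription slip: the combined sign is $(-1)^{n+1}(-1)^{j}(-1)^{n-2j}=(-1)^{j+1}$; you dropped the $(-1)^{j}$ coming with $T(n,j)$, but the term-by-term cancellation you describe is exactly the paper's computation.)
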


\begin{remark}
 Notice that it is enough to prove equation (\ref{relation_powers_hpgf}) only for prime powers of $q$, i.e., only for $1\leq n\leq l-1$ with $n$ prime. Otherwise, if $n=mr$ then 
$\F_{q^{n}}| \F_{q^{m}}| \F_{q}$ is a tower of extensions, and we can use the relation for these extensions of lower degree.
\end{remark}

\vspace{0.3cm}

As we have seen above, proving equation (\ref{relation_powers_hpgf}) for prime powers of $q$ would be enough to prove Conjecture \ref{conject} in the general case. However, equation (\ref{relation_powers_hpgf}) gets complicated as $n$ grows, so it would be helpful if this equation is needed for even fewer values of $n$ in order to prove the conjecture. This might be possible; in fact, this is what we did to prove cases $l=3$ and $l=5$ in section \ref{conjecture}. Hence, looking at the proofs in previous section, we see that 

\begin{prop}\label{relation_half_terms}
If the $L$-polynomial of the smooth projective curve of genus $l-1$ with affine model $\mathcal{C}_{z}:y^l=t(1-t)^{l-1}(1-zt)$ is a perfect square, and equation (\ref{relation_powers_hpgf}) is verified for all primes $n$ such that $1\leq n\leq (l-1)/2$, then Conjecture \ref{conject} holds.. 
\begin{proof}
Recall that 
$$L(\mathcal{C}_{z}/\F_{q};T)=\prod_{i=1}^{l-1} (1-a_{i,q}T+qT^2).$$
Hence, if the proposition is true, we would have
$$L(\mathcal{C}_{z}/\F_{q};T)=\prod_{i=1}^{(l-1)/2} (1-a_{i,q}T+qT^2)^2$$
therefore, after rearranging terms, we have $a_{i,q}=a_{l-i,q}$, for all $i=1,\cdots,l-1$. 

\noindent On the other hand, recall that by Corollary \ref{conjugate_hpgf} in section \ref{preliminaries} the hypergeometric functions $F_{i,q}$ come in pairs, i.e., $F_{i,q}=F_{l-i,q}$ for $i=1,\cdots, l-1$. Hence, system (\ref{system1}) gets reduced to half of it, having only $(l-1)/2$ unknowns. Then, it is enough to prove relation (\ref{relation_powers_hpgf}) only for primes $n$ up to $(l-1)/2$ in order to prove Conjecture \ref{conject}.
\end{proof}
\end{prop}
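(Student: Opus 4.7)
The plan is to leverage the two symmetries available — one coming from the hypergeometric side, one from the zeta function side — to collapse the system of equations appearing in the proof of Proposition \ref{relation_hypergeometric_functions} to half its length, and then to re-run the Newton--Girard argument on the shorter system.

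First, I would invoke Corollary \ref{conjugate_hpgf}: since $\eta_{q}^{i}$ and $\eta_{q}^{l-i}$ are conjugate characters and $z \neq 1$, we have
\[
F_{i,q} \;=\; F_{l-i,q} \qquad \text{for all } 1 \leq i \leq l-1,
\]
so only the $(l-1)/2$ values $F_{1,q}, F_{2,q},\ldots, F_{(l-1)/2,\,q}$ are genuinely distinct. In parallel, the hypothesis that $L(\mathcal{C}_{z}/\F_{q};T)$ is a perfect square means that the factors $(1 - a_{i,q}T + qT^{2})$ occur with even multiplicity, so after a suitable relabelling of indices we may assume
\[
a_{i,q} \;=\; a_{l-i,q} \qquad \text{for all } 1 \leq i \leq l-1.
\]
Thus the same pairing $i \leftrightarrow l-i$ acts on both sides of system (\ref{system1}), and that system collapses to an equivalent system on the $(l-1)/2$ unknowns $F_{1,q},\ldots, F_{(l-1)/2,\,q}$, with right-hand sides built from $a_{1,q},\ldots, a_{(l-1)/2,\,q}$.

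Next, I would replicate the inductive argument in the proof of Proposition \ref{relation_hypergeometric_functions}, but only through degree $(l-1)/2$. Assuming relation (\ref{relation_powers_hpgf}) for all $n$ up to $(l-1)/2$, the induction (using the identity $(-1)^{2n+1-j}=(-1)^{j+1}$ exactly as before) yields
\[
F_{1,q}^{n} + \cdots + F_{(l-1)/2,\,q}^{n} \;=\; (-1)^{n}\bigl( a_{1,q}^{n} + \cdots + a_{(l-1)/2,\,q}^{n}\bigr)
\]
for $1 \le n \le (l-1)/2$. By the remark following Proposition \ref{relation_hypergeometric_functions}, it suffices to verify (\ref{relation_powers_hpgf}) for the prime values of $n$ in this range, since composite values follow from composing extensions $\F_{q}\subset \F_{q^{m}}\subset \F_{q^{n}}$.

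Finally, I would close with Newton--Girard: a system of $(l-1)/2$ power-sum identities in $(l-1)/2$ variables determines the $(l-1)/2$ elementary symmetric polynomials, so $-F_{1,q},\ldots,-F_{(l-1)/2,\,q}$ and $a_{1,q},\ldots,a_{(l-1)/2,\,q}$ are the roots of the same monic polynomial of degree $(l-1)/2$. After a final rearrangement we obtain $F_{i,q} = -a_{i,q}$ for $1 \le i \le (l-1)/2$, and the paired identities extend this to all $1 \le i \le l-1$, giving Conjecture \ref{conject}. The main obstacle is essentially bookkeeping: one must check that the rearrangement of the $a_{i,q}$ that makes the $L$-polynomial manifestly a square is compatible with the pairing $i \leftrightarrow l-i$ forced on the $F_{i,q}$ by Corollary \ref{conjugate_hpgf}. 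Since both pairings are involutions on $\{1,\ldots,l-1\}$ with the same fixed-point-free structure (given that $l$ is odd), this compatibility is automatic up to relabelling the indices within each pair.
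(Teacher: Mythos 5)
Your proposal follows essentially the same route as the paper's own proof: you use Corollary \ref{conjugate_hpgf} to get $F_{i,q}=F_{l-i,q}$ and the perfect-square hypothesis to get $a_{i,q}=a_{l-i,q}$ (after relabelling), collapse system (\ref{system1}) to $(l-1)/2$ unknowns, and then conclude via the Newton--Girard argument of Proposition \ref{relation_hypergeometric_functions}, with the prime-$n$ reduction supplied by the remark on towers of extensions. The extra detail you supply (the halved power sums and the compatibility of the two involutions) is exactly what the paper leaves implicit, so the argument is correct and matches the intended proof.
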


Now, the question is how can we determine if the $L$-polynomial of $\mathcal{C}_{z}$ over $\F_{q}$ is a perfect square. One possible way is to do an argument similar to the one done for the cases $l=3$ and $l=5$. First, notice that we have the following result, analogous to Lemma \ref{curve_biratl}.

\begin{teo}\label{birational_curve_general_case}
 The curve $\mathcal{C}_{z}: y^l=t(1-t)^{l-1}(1-zt)$ is birationally equivalent to
\begin{equation}\label{curve_bir_equiv_general}
\mathcal{C}: Y^2=X^{2l}+2(1-2z)X^l+1.
\end{equation}
\begin{proof}
 The proof is analogous to the proof of Lemma \ref{curve_biratl}.
\end{proof}
\end{teo}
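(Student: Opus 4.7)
The plan is to reproduce, step by step, the sequence of birational transformations used in the proof of Lemma \ref{curve_biratl}, with the exponents $(l-1,l,l+1)$ playing the roles of $(2,3,4)$. The reason this works uniformly in $l$ is structural: once the point $(1,0)$ of multiplicity $l-1$ is translated to the origin, the defining equation decomposes into three homogeneous pieces of consecutive degrees $l-1,\,l,\,l+1$, and after substituting $y=tX$ the whole expression becomes $t^{l-1}$ times a quadratic in $t$. Completing the square in $t$ then produces a hyperelliptic equation $Y^2 = F(X)$ of the desired form.

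Explicitly, I would proceed as follows. First, translating $t \mapsto 1-t$ gives
\begin{equation*}
y^l = t^{l-1}(1-t)(1-z+zt) = (1-z)t^{l-1} + (2z-1)t^l - zt^{l+1}.
\end{equation*}
Multiplying through by $z^{-1}$, this can be written as $G_{l-1}(t,y) + G_l(t,y) + G_{l+1}(t,y) = 0$ with
\begin{equation*}
G_{l-1} := (1-z^{-1})t^{l-1}, \qquad G_l := z^{-1}y^l - (2-z^{-1})t^l, \qquad G_{l+1} := t^{l+1},
\end{equation*}
each $G_i$ homogeneous of the indicated degree. Substituting $y = tX$ turns $G_l$ into $t^l(z^{-1}X^l + z^{-1} - 2)$, so the total equation is divisible by $t^{l-1}$; dividing out and rearranging produces the quadratic in $t$
\begin{equation*}
t^2 + (z^{-1}X^l + z^{-1} - 2)\,t + (1-z^{-1}) = 0.
\end{equation*}
Since $\operatorname{char}(\F_q) \neq 2$, I can complete the square by setting $Y := 2t + z^{-1}X^l + z^{-1} - 2$, obtaining $Y^2 = (z^{-1}X^l + z^{-1} - 2)^2 - 4(1-z^{-1})$. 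Rescaling $Y \mapsto z^{-1}Y$ (equivalently, multiplying both sides by $z^2$) and using the identities $(z^{-1}-2)^2 - 4 + 4z^{-1} = z^{-2}$ and $(1-2z)^2 - 4z(z-1) = 1$ collapses this to $Y^2 = X^{2l} + 2(1-2z)X^l + 1$.

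I do not anticipate any serious conceptual obstacle: each transformation is either invertible everywhere (the translation, the square-completion, the rescaling) or invertible off the locus $t=0$ (the substitution $y=tX$ and the division by $t^{l-1}$), so the composite is birational. The hypothesis $z \neq 0,1$ guarantees that the scalars $z^{-1}$ and $1-z^{-1}$ are nonzero throughout, and a separate check of the discriminant of $F(X)=X^{2l}+2(1-2z)X^l+1$ (viewed as a quadratic in $X^l$ with discriminant $16z(z-1)\neq 0$) confirms that the right-hand model has no repeated roots. The only ``hard part'' is the careful algebraic bookkeeping to verify that the coefficients collapse as claimed; once the role of the exponents $l-1,l,l+1$ is fixed, the calculation parallels the $l=3$ case line by line.
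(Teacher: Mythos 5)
Your proposal is correct and follows exactly the route the paper intends: it reproduces the chain of transformations from Lemma \ref{curve_biratl} (translation $t\mapsto 1-t$, the decomposition into homogeneous pieces $G_{l-1},G_l,G_{l+1}$, the substitution $y=tX$, division by $t^{l-1}$, completing the square, and the rescaling $Y\mapsto z^{-1}Y$) with the exponents $(2,3,4)$ replaced by $(l-1,l,l+1)$, which is precisely what the paper means by ``analogous to the proof of Lemma \ref{curve_biratl}.'' The coefficient identities you check, including the discriminant $16z(z-1)\neq 0$, confirm the computation collapses to $Y^2=X^{2l}+2(1-2z)X^l+1$ as claimed.
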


Also, analogous to Lemma \ref{curve_no_odd_terms}, by considering the fractional linear transformation
$$X\to \frac{X+1}{X-1}$$
$$Y \to \frac{Y}{(X-1)^l}$$
we see that the curve (\ref{curve_bir_equiv_general}) is equivalent to a curve of the form 
$$Y^2= c_{l}X^{2l}+c_{l-1}X^{2(l-1)}+\cdots+c_{1}X^2+c_0$$
with no terms of odd degree in X, where the coefficients $c_{i}$ are polynomial equations in $z$. Then, as in previous section, we can conclude that the jacobian of $\mathcal{C}_{z}$ is isogenous to the product of the jacobians of two curves of genus $(l-1)/2$, call them $\mathcal{H}_{1,z}$ and $\mathcal{H}_{2,z}$. 
Therefore, by Proposition \ref{relation_half_terms}, we have

\begin{teo}
Let $q \equiv 1 \pmod{l}$. If $\#\mathcal{H}_{1,z}(\F_{q^i})=\#\mathcal{H}_{2,z}(\F_{q^i})$ for all $i=1,\ldots, (l-1)/2$, and equation (\ref{relation_powers_hpgf}) holds for all primes $n$ such that $1\leq n\leq (l-1)/2$, then Conjecture \ref{conject} holds.

\begin{proof}
 Notice that the fact that $\#\mathcal{H}_{1,z}(\F_{q^i})=\#\mathcal{H}_{2,z}(\F_{q^i})$ for all $i=1,\ldots, (l-1)/2$ implies that the curves $\mathcal{H}_{1,z}$ and $\mathcal{H}_{1,z}$ have the same L-polynomial over $\F_{q}$, then, as we mentioned above, the system (\ref{system1}) gets reduced to half of it, having only $(l-1)/2$ unknowns. The rest of the proof follows from Proposition \ref{relation_half_terms}.
\end{proof}

\begin{remark}
 Notice that, if Proposition \ref{relation_hypergeometric_functions} holds (i.e. Conjecture \ref{conject} is true over $\F_{q}$), using Lemma \ref{Dickson} we can get a result similar to Conjecture \ref{conject} over $\F_{q^n}$, for $n\in \N$.
\end{remark}

\end{teo}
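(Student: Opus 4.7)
The plan is to reduce the theorem to Proposition \ref{relation_half_terms} by upgrading the point-count hypothesis into the statement that $L(\mathcal{C}_{z}/\F_{q};T)$ is a perfect square. The other hypothesis of that proposition, namely the hypergeometric identity (\ref{relation_powers_hpgf}) for primes $n \leq (l-1)/2$, is handed to us verbatim, so everything comes down to the perfect-square property.

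The first step is to invoke the Jacobian decomposition set up in the paragraph just before the theorem. By Theorem \ref{birational_curve_general_case} together with the fractional linear transformation $X \mapsto (X+1)/(X-1)$, $Y \mapsto Y/(X-1)^{l}$, the curve $\mathcal{C}_{z}$ is equivalent to a hyperelliptic model $Y^{2} = \sum_{i=0}^{l} c_{i} X^{2i}$ with only even powers of $X$. As in the case $l=5$ treated in Section \ref{proof l=5}, the standard pair of covers $X \mapsto X^{2}$ and $(X,Y) \mapsto (X^{-2}, Y X^{-l})$ produces two maps onto hyperelliptic curves $\mathcal{H}_{1,z}$ and $\mathcal{H}_{2,z}$, each of genus $(l-1)/2$, whose Jacobians assemble into an isogeny
\[
\mathrm{Jac}(\mathcal{C}_{z}) \sim \mathrm{Jac}(\mathcal{H}_{1,z}) \times \mathrm{Jac}(\mathcal{H}_{2,z}),
\]
so that $L(\mathcal{C}_{z}/\F_{q}; T) = L(\mathcal{H}_{1,z}/\F_{q}; T) \cdot L(\mathcal{H}_{2,z}/\F_{q}; T)$.

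The second step is to use the point-count hypothesis to show the two factors coincide. Each $\mathcal{H}_{j,z}$ has genus $g = (l-1)/2$, so its $L$-polynomial is determined by its $g$ Frobenius traces $a_{k,j} = \alpha_{k,j} + \overline{\alpha_{k,j}}$, which through Newton's identities are pinned down by the power sums $\sum_{k} (\alpha_{k,j}^{n} + \overline{\alpha_{k,j}}^{n})$ for $n = 1, \ldots, g$. By formula (\ref{formulapuntos}) these power sums are simply $q^{n} + 1 - \# \mathcal{H}_{j,z}(\F_{q^{n}})$. Consequently the assumed equalities of point counts for $n \leq (l-1)/2$ force $L(\mathcal{H}_{1,z}/\F_{q};T) = L(\mathcal{H}_{2,z}/\F_{q};T)$, so that $L(\mathcal{C}_{z}/\F_{q};T)$ is a perfect square and Proposition \ref{relation_half_terms} applies to deliver Conjecture \ref{conject}.

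The main obstacle in this route is really upstream of the theorem itself. On the one hand, the Jacobian splitting has to be established for arbitrary $l$ as a genuine extension of the case-by-case constructions in Sections \ref{proof l=3} and \ref{proof l=5}, and in particular an analogue of the explicit isogeny of Proposition \ref{elliptic_curves_isogenous} is needed to ensure that one can choose $\mathcal{H}_{1,z}$ and $\mathcal{H}_{2,z}$ whose point counts agree after base change up through $\F_{q^{(l-1)/2}}$. On the other hand, and more seriously, the hypergeometric identity (\ref{relation_powers_hpgf}) itself must be proved for all relevant $n$, which will require a substantial generalization of the norm-down-to-$\F_q$ double-sum manipulation carried out for $l=5$ in Section \ref{proof l=5}; this is where the bulk of the work for the general conjecture ultimately lies.
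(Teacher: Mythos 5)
Your argument is correct and is essentially the paper's own proof: you turn the equal point counts over $\F_{q^i}$, $i=1,\ldots,(l-1)/2$, into equality of the $L$-polynomials of $\mathcal{H}_{1,z}$ and $\mathcal{H}_{2,z}$ (your Newton-identity/power-sum step just spells out what the paper leaves implicit), use the Jacobian splitting $\mathrm{Jac}(\mathcal{C}_{z})\sim \mathrm{Jac}(\mathcal{H}_{1,z})\times \mathrm{Jac}(\mathcal{H}_{2,z})$ set up in the paragraph preceding the theorem to conclude that $L(\mathcal{C}_{z}/\F_{q};T)$ is a perfect square, and then apply Proposition \ref{relation_half_terms}. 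Your closing caveats (establishing the splitting for general $l$ and proving (\ref{relation_powers_hpgf})) concern applying the theorem rather than proving this conditional statement, whose hypotheses hand you those ingredients, so they do not affect the correctness of the argument.
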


\vspace{0.4cm}

\section{acknowledgments}
%{\it aknowledgment:}
\vspace{0.3cm}
The author would like to thank her advisor Matthew Papanikolas for suggesting this problem and for his advise and support during the preparation of this paper.
\vspace{0.4cm}
%{\it aknoledgment:}

\end{document}